\newtheorem{thm}{Theorem}[section]
\newtheorem{lem}[thm]{Lemma}
\newtheorem{defi}[thm]{Definition}
\newtheorem{prop}[thm]{Proposition}
\newtheorem{rem}[thm]{Remark}
\newtheorem{fact}[thm]{Scholium}
\newtheorem{cor}[thm]{Corollary}
\numberwithin{equation}{section}
\newcommand{\Zb}{\mathbb{Z}}
\newcommand{\Rb}{\mathbb{R}}
\newcommand{\Nb}{\mathbb{N}}
\newcommand{\Cb}{\mathbb{C}}
\newcommand{\Eb}{\mathbb{E}}
\newcommand{\A}{\mathcal{A}}
\newcommand{\X}{\mathcal{X}}
\newcommand\norm[1]{\big\| #1 \big\|}
\newcommand\md[1]{\left| #1 \right|}
\newcommand\p[1]{\left( #1 \right)}
\newcommand\set[1]{\left\lbrace #1 \right\rbrace}
\newcommand\floor[1]{\left\lfloor #1 \right\rfloor}
\newcommand{\e}{\varepsilon}
\newcommand{\Ind}{\mathbf{1}}
\newcommand\headd{\mathbin{\prec\mkern-3mu\prec_{\rm hd}}}
\newcommand\taill{\mathbin{\prec\mkern-3mu\prec_{\rm tl}}}
\newcommand\head{\prec_{\rm hd}}
\newcommand\tail{\prec_{\rm tl}}
\newcommand\supp{\text{\rm supp}}
\begin{document}

\title{Lorentz-Shimogaki-Arazy-Cwikel Theorem revisited}

\author[]{L. Cadilhac}
\address{Mathematical Institute of the Polish Academy of Science, ul. Sniadeckich 8
00-656 Warszawa}
\address{Normandie Univ,
  UNICAEN, CNRS, Laboratoire de Math{\'e}matiques Nicolas Oresme,
  14000 Caen, France}
\email{lcadilhac@impan.pl}

\author[]{F. Sukochev}
\address{School of Mathematics and Statistics, University of New South Wales, Kensington,  2052, Australia}
\email{f.sukochev@unsw.edu.au}

\author[]{D. Zanin}
\address{School of Mathematics and Statistics, University of New South Wales, Kensington,  2052, Australia}
\email{d.zanin@unsw.edu.au}

\begin{abstract} We present a new approach to Lorentz-Shimogaki and Arazy-Cwikel Theorems which covers all range of $p,q\in (0,\infty]$ for function spaces and sequence spaces. As a byproduct, we
solve a conjecture of Levitina and the last two authors.
\end{abstract}

\date{}
\maketitle

\section{Introduction}

Descriptions of interpolation spaces for couples of $L_p$-spaces, $1\leq p\leq \infty$ were extensively researched at the end of the 70's and in the 80's, providing satisfying answers to most problems which were considered relevant at the time.

However, new questions arising from noncommutative analysis recently highlighted some gaps in our knowledge of this subject, especially for the case of $p<1$. In this paper, we revisit some important results of the literature (\cite{LS71},\cite{AC84}, \cite{Spa78}), generalising them and thus filling some of the holes that were revealed in the theory. In particular, we answer a question asked in \cite{LSZ20} by Levitina and the last two authors and already partially studied in \cite{CN17} regarding the interpolation theory of sequence spaces (see Theorem \ref{thm:conj LSZ}). Besides this new result, this paper introduces a general approach which covers the range of all $0\leq p\leq\infty$ and is self-contained. It puts an emphasis on the use of the space $L_0$ of all finitely supported measurable functions.
As far as the authors know this space rarely appears in interpolation theory (however, see  \cite{HM90}, \cite{Astashkin-1994} and \cite{HS19}). We provide evidence that $L_0$ is a suitable ``left endpoint'' on the interpolation scale of $L_p$-spaces, despite its possessing an atypical structure (it is not even an $F$-space).

\smallskip

Recall that a function space $E$ is an {\bf interpolation space} for the couple $(L_p,L_q)$ if any linear operator $T$ bounded on $L_p$ and $L_q$ is also bounded on $E$ (see Definition \ref{def:interpolation space}). This notion provides a way of transfering inequalities well-known in $L_p$-spaces to more exotic ones. To both understand the range of applicability of this technique and be able to check whether it applies to a given function space $E$, we are interested in simple descriptions of interpolation spaces for the couple $(L_p,L_q)$.

This problem has a long history starting with  seminal Calder\'on-Mityagin theorem \cite{Mityagin65} and \cite{Cal66} on the couple $(L_1,L_\infty)$ and followed by Lorentz and Shimogaki's \cite{LS71} results on the couples $(L_1,L_q)$ and $(L_p,L_\infty)$ with $1\leq p, q\leq \infty$, which are stated in terms of various submajorizations, see also \cite[Theorem 7.2]{KM03}.

We consider two orders: {\bf head majorization} and {\bf tail majorization}. Head majorization coincides with the usual notion of (sub)majorization and already appears in Calder\'on's work. It is defined on $L_1 + L_{\infty}$ by:
$$g\headd f \Leftrightarrow \forall t>0,
\int_0^t\mu(s,g)ds\leq\int_0^t\mu(s,f)ds.$$
If moreover $f,g\in L_1$ and $\norm{f}_1 = \norm{g}_1$, then we write $g \head f$. Above and in the remainder of the text $\mu(g) : t \to \mu(t,g)$ denotes the right-continuous decreasing rearangement of $g$. Note that $\mu(g)$ is well-defined if and only if $g$ belongs to $L_0 + L_\infty$.

Tail majorization is defined on $L_0 + L_1$ by:
$$g\taill f \Leftrightarrow \forall t>0,
\int_t^\infty \mu(s,g)ds\leq\int_t^\infty \mu(s,f)ds.$$
If moreover $f,g\in L_1$ and $\norm{f}_1 = \norm{g}_1$, then we write $g \tail f$. Remark that $g\tail f$ if and only if $f \head g$.

It is well-known that the order $\headd$ is strongly linked to the interpolation theory of the couples $(L_p,L_\infty)$. We show that  similarly, the order $\taill$ is linked to the couples $(L_0,L_q)$. Combining these two tools, we recover in a self-contained manner characterisations of interpolation spaces for couples of arbitrary $L_p$-spaces, $0< p\leq \infty$ earlier obtained in \cite{Cad19}. Note that tail majorization coincides with the weak supermajorization of \cite{DDS14}.

Let $\mathcal{X}$ be the linear space of all measurable functions.
If not precised otherwise, {\bf the underlying measure space we are working on is $(0,\infty)$} equipped with the Lebesgue measure $m$.
We obtain the following:

\begin{thm}\label{thm : intro inter}
Let $E\subset\mathcal{X}$ be a quasi-Banach function space (a priori, not necessarily symmetric). Let $p,q\in (0,\infty)$ such that $p<q$. Then:
\begin{enumerate}[{\rm (a)}]
\item $E$ is an interpolation space for the couple $(L_p,L_\infty)$ if and only if there exists $c_{p,E}>0$ such that for any $f\in E$ and $g\in L_p+L_\infty$,
$$\md{g}^p \headd \md{f}^p \Rightarrow g\in E\ \text{and}\ \norm{g}_E \leq c_{p,E} \norm{f}_E;$$
\item  $E$ is an interpolation space for the couple $(L_0,L_q)$ if and only if there exists $c_{q,E}>0$ such that for any $f\in E$ and $g\in L_0+L_q$,
$$\md{g}^q \taill \md{f}^q \Rightarrow g\in E\ \text{and}\ \norm{g}_E \leq c_{q,E} \norm{f}_E;$$
\item $E$ is an interpolation space for the couples $(L_0,L_q)$ and $(L_p,L_\infty)$ if and only if it is an interpolation space for the couple $(L_p,L_q)$.
\end{enumerate}
\end{thm}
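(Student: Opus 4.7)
The proof decomposes along the three parts (a), (b), (c), which I would address in that order.

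For (a), the content is a $p$-powered version of Calder\'on--Mityagin. The harder direction (interpolation $\Rightarrow$ head-$p$-invariance) constructs, for any pair with $\md{g}^p \headd \md{f}^p$, a linear operator $T$ with $Tf = g$ that is simultaneously bounded on $L_p$ and $L_\infty$. The classical Calder\'on construction applied to $\md{f}^p$ and $\md{g}^p$ produces a doubly substochastic operator $S$ sending $\md{f}^p$ to $\md{g}^p$; one lifts $S$ to an operator on $f$ itself by decomposing $f$ and $g$ into layers indexed by the level sets of $\md{f}$ and $\md{g}$ and assembling $T$ piecewise, verifying along the way that the $L_p$- and $L_\infty$-norms of $T$ are controlled. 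For the converse, the $K$-functional identity
\[
K(t, f; L_p, L_\infty)^p \sim \int_0^{t^p} \mu(s,f)^p\,ds
\]
immediately gives $\md{Tf}^p \headd C \md{f}^p$ whenever $T$ is bounded on both endpoints, and the hypothesised invariance of $E$ concludes the proof.

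For (b), the argument is formally symmetric under the involution $g \tail f \iff f \head g$, with head replaced by tail majorization and $L_\infty$ replaced by $L_0$. The new difficulty is that $L_0$ is not an $F$-space, so the standard $K$-method cannot be invoked directly; the Calder\'on-type lifting must be redone by hand in this nonstandard setting, using the support-based notion of boundedness on $L_0$ presumably developed earlier in the paper. The gauge $\int_t^\infty \mu(s,f)^q\,ds$ plays the role of the endpoint quantity. I expect this to be the most technically involved part of the proof, since the foundational interpolation theory at $L_0$ is nonclassical.

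For (c), the direction ``$(L_p,L_q)$-interpolation $\Rightarrow$ both $(L_p,L_\infty)$- and $(L_0,L_q)$-interpolation'' follows from classical real interpolation together with (b): any $T$ bounded on $L_p$ and $L_\infty$ is bounded on $L_q$ by real interpolation, hence on $(L_p, L_q)$, hence on $E$; and a direct Hardy-type calculation shows that $L_p$ itself is invariant under tail-$q$-submajorization of $q$-th powers, so by (b) any operator bounded on $L_0$ and $L_q$ is also bounded on $L_p$, yielding the same conclusion. For the reverse direction, I would use (a) and (b) to recast the hypothesis as simultaneous head-$p$- and tail-$q$-submajorization invariance of $E$, and then show that these two invariances combined force $K$-monotonicity for the couple $(L_p, L_q)$. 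Concretely, given $T$ bounded on $L_p$ and $L_q$ and $f \in E$, one splits $Tf = u + v$ so that $\md{u}^p \headd C\md{f}^p$ and $\md{v}^q \taill C\md{f}^q$; both estimates are meant to descend from $K(\cdot, Tf; L_p, L_q) \les K(\cdot, f; L_p, L_q)$, after which the quasi-triangle inequality on $E$ yields $\norm{Tf}_E \les \norm{f}_E$. This coordinated split is the principal obstacle: a single decomposition of $Tf$ must simultaneously produce two distinct per-scale submajorization estimates against a common $f$, and executing it will likely require an atomic or telescoping decomposition across dyadic levels rather than a one-shot threshold cut.
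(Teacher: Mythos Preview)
Your overall architecture matches the paper's: (a) and (b) each split into an operator-construction direction and a $K$-functional direction, and (c) hinges on decomposing $\mu(Tf)=g_1+g_2$ with $g_1^p\headd C\mu(f)^p$ and $g_2^q\taill C\mu(f)^q$. The differences are in execution, in three places.

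For the operator construction you propose applying Calder\'on to $|f|^p,|g|^p$ to get a doubly substochastic $S$ and then ``lifting'' to a linear $T$ on $f$; that lifting is precisely the nontrivial step, and nothing in the Calder\'on construction hands it to you (a linear $S$ on $|f|^p$ does not yield a linear $T$ on $f$). The paper sidesteps this entirely: it uses partition lemmas (Lemmas~\ref{first partition lemma}--\ref{third partition lemma}) to break the relation $|g|^p\headd|f|^p$ into a disjoint union of two-interval pieces on which $f$ and $g$ are each constant, and then assembles $T$ from elementary operators on each piece (Lemmas~\ref{first operator lemma}--\ref{fourth operator lemma}), with explicit constants and no duality. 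For (b), the paper does not attempt a $K$-functional on $(L_0,L_q)$ at all: the direction ``tail-$q$-invariance $\Rightarrow$ interpolation'' is proved by showing directly that every $(L_0,L_q)$-contraction $T$ satisfies $|Tf|^q\taill|f|^q$ (a one-line cut of $f$ at level $\mu(t,f)$), then invoking the closed graph theorem and a separate uniform-constant argument (Theorem~\ref{thm: inter L_0}); this is how the $L_0$ obstacle you anticipate is actually bypassed. Finally, for the split in (c) your strategy is exactly right but the mechanism is simpler than a dyadic telescoping: Holmstedt's formula gives that at \emph{every} $t>0$ at least one of the two one-sided inequalities holds (up to a constant), and Lemma~\ref{cad decomp lemma} then produces $g_1,g_2$ by a single greedy pass keyed to the sets where each inequality is valid.
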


This extends results of Lorentz-Shimogaki and Arazy-Cwikel to the quasi-Banach setting and contributes to the two first questions asked by Arazy in \cite[p.232]{CP84} in the particular case of $L_p$-spaces, $0<p<\infty$. As mentioned before, our approach places $L_0$ as a left endpoint on the interpolation scale of $L_p$-spaces, in sharp contrast to earlier results which focused mostly on Banach spaces and had $L_1$ playing this part.
An advantage of our approach is that it naturally encompasses every symmetric quasi-Banach space since they are all interpolation spaces for the couple $(L_0,L_\infty)$ (see \cite{HM90}, \cite{Astashkin-1994}). On the contrary, there exist some symmetric Banach spaces which are {\it not} interpolation spaces for the couple $(L_1,L_\infty)$ (see \cite{SSS15}). This led to some difficulties which were customarily be circumvented with the help of various technical conditions such as the Fatou property (as appears for example in \cite{AM04}).

Our strategy in this paper is totally different from the techniques used in \cite{Mityagin65,Cal66, LS71, Spa78, Cwi81,  HM90, AC84, CP84, Astashkin-1994, CN17,  KM03, AM04, Cad19, Astashkin20} and is based on partition lemmas, which were originally developed in a deep paper due to Braverman and Mekler \cite{BM}, which lies outside of the realm of interpolation theory. The approach of Braverman and Mekler was subsequently revised and redeveloped in \cite{SZ09} and precisely this revision consitutes the core of our approach in this paper.

We restate partition lemmas based on \cite[Proposition 19]{SZ09} in Section \ref{bi-contraction}. These lemmas allow us to restrict head and tail majorizations to very simple situations and reduce the problem to functions taking at most two values. Then, we deduce interpolation results from those structural lemmas.

Note that this scheme of proof is quite direct and in particular, does not involve at any point duality related arguments which are applicable only to Banach spaces (\cite{LS71}) or more generally to $L$-convex quasi-Banach spaces (\cite{Kal84},\cite{Pop83}).

In Section \ref{sequence}, we pursue the same type of investigation, but in the setting of sequence spaces. The non-diffuse aspect of the underlying measure generates substantial technical difficulties. In particular, we require a new partition lemma which is not as efficient as those in Section \ref{bi-contraction} (compare Lemmas \ref{sequence operator lemma 1} and \ref{third operator lemma}). This deficiency has been first pointed out to the authors by Cwikel. However, we are still able to resolve  the conjecture of \cite{LSZ20} (in the affirmative) by combining Lemma \ref{sequence partition lemma 1} with a Boyd-type argument which we borrow from Montgomery-Smith \cite{Mon96}. In particular, we substantially strengthen the results in \cite{CN17}. Here is the precise statement that we obtain:

\begin{thm}\label{thm:conj LSZ}
Let $E \subset \ell_\infty$ be a quasi-Banach sequence space and $q\geq 1$. The following conditions are equivalent:
\begin{enumerate}[{\rm (a)}]
\item there exists $p<q$ such that $E$ is an interpolation space for the couple $(\ell^p,\ell^q)$;
\item there exists $c>0$ such that for any $u\in E$ and $v\in \ell_\infty$
 $$|v|^q \taill |u|^q \Rightarrow v\in E\ \text{and}\ \|v\|_E\leq c\|u\|_E;$$

 \item for any $u\in E$ and $v\in \ell_\infty$,
 $$|v|^q \taill |u|^q \Rightarrow v\in E.$$

\end{enumerate}
\end{thm}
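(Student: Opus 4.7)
The plan is to establish the cycle (a) $\Rightarrow$ (b) $\Rightarrow$ (c) $\Rightarrow$ (a). The implication (b) $\Rightarrow$ (c) is immediate from the definitions. The implication (a) $\Rightarrow$ (b) will rely on the sequence operator lemma to realise tail majorization by an explicit bounded operator. The return arrow (c) $\Rightarrow$ (a) is the main direction; it rests on Lemma \ref{sequence partition lemma 1} combined with a Boyd-type extrapolation borrowed from Montgomery-Smith \cite{Mon96}. The strengthening of (c) into (b) along the way is a closed graph argument.

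For (a) $\Rightarrow$ (b), I would first observe that any interpolation space for $(\ell^p,\ell^q)$ is automatically symmetric, since coordinate permutations are isometries of both endpoints and hence uniformly bounded on $E$. Given $u\in E$ and $v\in\ell_\infty$ with $|v|^q\taill|u|^q$, Lemma \ref{sequence operator lemma 1} should produce a positive operator $T$ simultaneously bounded on $\ell^p$ and $\ell^q$, with norms depending only on $p$ and $q$, such that $T\mu(u)$ dominates $\mu(v)$ coordinatewise. The interpolation property then yields
$$\|v\|_E = \|\mu(v)\|_E \leq \|T\mu(u)\|_E \leq c_{p,q}\|\mu(u)\|_E = c_{p,q}\|u\|_E,$$
the outer equalities relying on the symmetry of $E$.

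For (c) $\Rightarrow$ (b), I would run the standard closed graph argument. Set
$$\|u\|_F \;=\; \sup\bigl\{\|v\|_E \,:\, |v|^q \taill |u|^q\bigr\}$$
and let $F=\{u\in\ell_\infty : \|u\|_F<\infty\}$. Condition (c) guarantees only membership in $E$, not a uniform norm bound, so the first substep is to verify that every $u \in E$ already lies in $F$; this follows from a closed graph argument (or equivalently from excluding, by a sliding hump, that $\|u\|_F=\infty$ for some $u\in E$). Completeness of $F$ under $\|\cdot\|_F$ is inherited from that of $E$, and since $\|\cdot\|_E\leq\|\cdot\|_F$ trivially while $E=F$ as sets, the open mapping theorem supplies the reverse bound $\|\cdot\|_F\leq c\|\cdot\|_E$, which is exactly (b).

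The heart of the proof is (c) $\Rightarrow$ (a). The strategy is to produce some $p<q$ so that for every operator $T$ bounded on $\ell^p$ and $\ell^q$ and every $u\in E$, we can control $\|Tu\|_E$ in terms of $\|u\|_E$. Lemma \ref{sequence partition lemma 1} lets us decompose $u$ and reduce the control of $\mu(Tu)$ to a \emph{dilated} tail majorization of $\mu(u)$, which is strictly weaker than the plain tail majorization appearing in (c). The second ingredient, adapted from Montgomery-Smith \cite{Mon96}, extracts from (c) alone a Boyd-type bound on the norms of the dilation operators acting on $E$ that grows polynomially with exponent strictly less than $q$; this polynomial control fixes an admissible $p<q$ and absorbs the dilation into the tail majorization, so that (c) can be applied to obtain the desired estimate for $\|Tu\|_E$. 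The main obstacle, as highlighted in the introduction, is precisely the gap between Lemma \ref{sequence partition lemma 1} and its diffuse counterpart Lemma \ref{third operator lemma}, and the Boyd argument of Montgomery-Smith is exactly what closes it.
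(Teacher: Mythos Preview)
Your sketch of (a) $\Rightarrow$ (b) is essentially the paper's route (packaged there as the ``only if'' half of Theorem \ref{thm: sequence interpolation}); one small correction is that Lemma \ref{sequence operator lemma 1} produces an operator bounded on $(\ell^0,\ell^q)$, not on $(\ell^p,\ell^q)$---boundedness on $\ell^p$ then follows because $\ell^p\in{\rm Int}(\ell^0,\ell^q)$.

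The description of the return arrow (c) $\Rightarrow$ (a) misidentifies both ingredients. Lemma \ref{sequence partition lemma 1} is not used in this direction at all; it enters the proof only through Lemma \ref{sequence operator lemma 1} in the (a) $\Rightarrow$ (b) step. The paper's actual route back runs through \emph{head} majorization: the upper Boyd index $\beta_E$ is finite for any symmetric quasi-Banach space (this is automatic from the quasi-norm, not extracted from (c)), and the Montgomery-Smith argument (Proposition \ref{prop:Boyd}) converts that dilation bound into the $p$-head majorization property for every $p<1/\beta_E$. Combining this head property with the $q$-tail property supplied by (b), the ``if'' half of Theorem \ref{thm: sequence interpolation} (quoted from \cite{Cad19}) delivers $E\in{\rm Int}(\ell^p,\ell^q)$. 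There is no ``dilated tail majorization of $\mu(Tu)$'' in the argument, and it is unclear how Lemma \ref{sequence partition lemma 1}---which decomposes a \emph{given} tail majorization rather than relating $\mu(Tu)$ to $\mu(u)$---could produce one. Your account also inverts the role of Montgomery-Smith: the polynomial dilation bound is the \emph{input} to Proposition \ref{prop:Boyd}, not an output deduced from (c).

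As a side remark, the paper's displayed proof only treats (a) $\Leftrightarrow$ (b) and is silent on (c); your inclusion of a closed-graph step (c) $\Rightarrow$ (b), in the spirit of Lemma \ref{closed graph lemma}, is a reasonable way to fill that gap.
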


We freely use results of Cwikel \cite{Cwi81} and the first author \cite{Cad19} to avoid repeating too many similar arguments.

\section*{Preliminaries}

\subsection*{Interpolation spaces}

The reader is referred to \cite{BL76} for more details on interpolation theory and \cite{KPS82} for an introduction to symmetric spaces. In the remainder of this section, $p$ and $q$ will denote two nonnegative reals such that $p\leq q$.

Let $(\Omega,m)$ be any measure space (in particular the following definitions apply to $\Nb$ equipped with the counting measure {\it i.e} sequence spaces).  As previously mentioned, $L_0(\Omega) \subset\mathcal{X}(\Omega)$ denotes the set of functions whose supports have finite measures, it is naturally equipped with the group norm $$\norm{f}_{0} = m( \supp\ f),\ f\in L_0(\Omega).$$
The \lq\lq norm\rq\rq\ of a linear operator  $T:L_0(\Omega)\to L_0(\Omega),$ is defined as follows:
$$\|T\|_{L_0\to L_0}=\sup_{f\in L_0}\frac{m({\rm supp}(Tf))}{m({\rm supp}(f))}.$$

\begin{defi} A linear space $E \subset \X(\Omega)$ becomes a quasi-Banach function space when equipped with a complete quasi-norm $\norm{.}_E$ such that
\begin{itemize}
\item if $f\in E$ and $g\in\mathcal{X}(\Omega)$ are such that $\md{g} \leq \md{f},$ then $g\in E$ and $\norm{g}_E \leq \norm{f}_E$.
\end{itemize}
\end{defi}

\begin{defi} A quasi-Banach function space $E\subset\mathcal{X}(\Omega)$ is called symmetric if
\begin{itemize}
\item if $f\in E$ and $g\in\mathcal{X}(\Omega)$ are such that $\mu(f) = \mu(g)$ then $g\in E$ and $\norm{g}_E = \norm{f}_E$;
\end{itemize}
\end{defi}

\begin{defi}[Bounded operator on a couple of quasi-Banach function spaces] Let $X$ and $Y$ be quasi-Banach function spaces. We say that a linear operator $T$ is bounded on $(X,Y)$ if $T$ is defined from $X+Y$ to $X+Y$ and restricts to a bounded operator from $X$ to $X$ and from $Y$ to $Y$. Set:
$$\norm{T}_{(X,Y)\to (X,Y)} = \max \p{\norm{T}_{X\to X},\norm{T}_{Y\to Y}}.$$
\end{defi}

Les us now recall the precise abstract definition of an interpolation space (see \cite{BL76}, \cite{KPS82}).

\begin{defi}[Interpolation space between quasi-Banach function spaces]\label{def:interpolation space} Let $X,$ $Y$ and $Z$ be quasi-Banach function spaces on $\Omega.$ We say that $Z$ is an interpolation space for the couple $(X,Y)$ if $X\cap Y \subset Z \subset X+Y$ and any bounded operator on $(X,Y)$ restricts to a bounded operator on $Z$. Denote by ${\rm Int}(X,Y)$ the set of interpolation spaces for the couple $(X,Y).$
\end{defi}

\begin{thm}\label{rem:def interpolation}
Let $\mathcal{V}$ be a separable topological linear space and let $A,B,C\subset\mathcal{V}$ be quasi-Banach spaces. If $C$ is an interpolation space for the couple $(A,B),$ then there exists a constant $c(A,B,C) >0$ such that for any bounded operator $T$ on $(A,B)$,
$$\|T\|_{C\to C}\leq c(A,B,C)\cdot\max\{\|T\|_{A\to A},\|T\|_{B\to B}\}.$$
\end{thm}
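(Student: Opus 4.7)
The plan is a closed graph argument, tailored to the quasi-Banach setting. By the Aoki--Rolewicz theorem, every quasi-Banach space is topologically equivalent to a $p$-Banach space for some $p \in (0,1]$, hence is an F-space, so the closed graph theorem applies. The constant $c(A,B,C)$ will emerge from the continuity of a natural restriction operator between two spaces of bounded operators.

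First, I would organize the admissible operators into a quasi-Banach space. Set $\mathcal{L}(A,B) := \{T : A+B \to A+B \colon T|_A \in \mathcal{L}(A) \text{ and } T|_B \in \mathcal{L}(B)\}$, equipped with the quasi-norm $\|T\| := \max(\|T\|_{A\to A}, \|T\|_{B\to B})$. Completeness is routine: for a Cauchy sequence $(T_n)$, the completeness of $\mathcal{L}(A)$ and $\mathcal{L}(B)$ yields limits $T_A$ and $T_B$, and Hausdorffness of $\mathcal{V}$ guarantees that they agree on $A\cap B$ and hence combine into a single operator $T$ on $A+B$. The hypothesis $C \in \mathrm{Int}(A,B)$ then produces a well-defined restriction map
$$\rho : \mathcal{L}(A,B) \longrightarrow \mathcal{L}(C), \qquad \rho(T) = T|_C,$$
and the theorem is precisely the assertion that $\rho$ is bounded.

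To invoke the closed graph theorem it suffices to show that the graph of $\rho$ is closed. Suppose $T_n \to T$ in $\mathcal{L}(A,B)$ and $\rho(T_n) \to S$ in $\mathcal{L}(C)$. For any $c \in C \subset A+B$, write $c = a + b$ with $a \in A$, $b \in B$; then $T_n c = T_n a + T_n b$ converges to $Ta + Tb = Tc$ in $A+B$, and hence in $\mathcal{V}$. At the same time, $T_n c = \rho(T_n) c$ converges to $Sc$ in $C$, and hence in $\mathcal{V}$. Since $\mathcal{V}$ is Hausdorff, the two limits coincide, giving $Tc = Sc$ for every $c \in C$, i.e. $\rho(T) = S$.

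The principal subtlety is ensuring that norm convergence in $A$, $B$, or $C$ genuinely entails convergence in the ambient topology of $\mathcal{V}$, i.e. that the inclusions $A,B,C \hookrightarrow \mathcal{V}$ are continuous. This is the implicit content of calling $A,B,C$ quasi-Banach subspaces of $\mathcal{V}$; if one insists on a proof, the separability assumption on $\mathcal{V}$ plausibly enters at this stage through an auxiliary closed-graph argument applied to each of the embeddings. Once this point is settled, the closed graph theorem for F-spaces applied to $\rho$ yields the desired constant $c(A,B,C) = \|\rho\|$, completing the proof.
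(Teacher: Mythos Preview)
Your proposal is correct and takes essentially the same approach as the paper: the paper simply cites \cite[Lemma I.4.3]{KPS82} and observes that the argument---a closed graph argument applied to the restriction map from the space of admissible operators on $(A,B)$ into $\mathcal{L}(C)$---carries over verbatim to the quasi-Banach setting because the closed graph theorem holds for $F$-spaces. You have spelled out exactly this argument, and you correctly identify the role of the separability hypothesis on $\mathcal{V}$ (continuity of the inclusions, needed to compare limits in $\mathcal{V}$), which is the one point the paper leaves entirely implicit in its citation.
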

\begin{proof} In \cite[Lemma I.4.3]{KPS82}, the assertion is proved for Banach spaces. The argument for quasi-Banach spaces is identical (because it relies on the closed graph theorem, which holds for $F$-spaces, hence, for quasi-Banach spaces).
\end{proof}

If, in particular, $0<p<q<\infty$ and if $E$ is an interpolation space for the couple $(L_p(\Omega),L_q(\Omega)),$ then there exists a constant $c>0$ called interpolation constant of $E$ for $(L_p(\Omega),L_q(\Omega))$ such that for any bounded operator $T$ on $(L_p(\Omega),L_q(\Omega))$,
$$\norm{T}_{E\to E} \leq c\norm{T}_{(L_p(\Omega),L_q(\Omega)) \to (L_p(\Omega),L_q(\Omega))}.$$

Finally, let us recall the definition of the $K$-functional associated to a couple of quasi-Banach spaces. For any $t>0$, and $f\in X+Y$, let
$$K_t(f,X,Y) := \inf_{g+h = f} \norm{g}_X + t\norm{h}_Y.$$

In addition to that, we need to define an interpolation space between $L_0$ and a quasi-Banach space.

\begin{defi}[Bounded operator on a couple $(L_0(\Omega),Y)$ for a quasi-Banach function space $Y$] Let $Y$ be a quasi-Banach function space. We say that a linear operator $T$ is bounded on $(L_0(\Omega),Y)$ if $T$ is defined from $L_0(\Omega)+Y$ to $L_0(\Omega)+Y$ and restricts to a bounded operator from $L_0(\Omega)$ to $L_0(\Omega)$ and from $Y$ to $Y.$
\end{defi}

\begin{defi}[Interpolation space for a couple $(L_0(\Omega),Y)$ for a quasi-Banach function space $Y$]\label{def:interpolation space for L_0 and Y} Let $Y$ and $Z$ be quasi-Banach function spaces on $\Omega.$ We say that $Z$ is an interpolation space for the couple $(L_0(\Omega),Y)$ if $L_0(\Omega)\cap Y \subset Z \subset L_0(\Omega)+Y$ and any bounded operator on $(L_0(\Omega),Y)$ restricts to a bounded operator on $Z.$ Denote by ${\rm Int}(L_0(\Omega),Y)$ the set of interpolation spaces for the couple $(L_0(\Omega),Y).$
\end{defi}

%

\subsection*{Symmetry of interpolation spaces}

In this subsection, we show that a quasi-Banach interpolation space for a couple of symmetric spaces can always be remormed into a symmetric space. Note that similar results can be found in the literature, see for example \cite[Theorem 2.1]{KPS82}.

As usual, we will use the term {\bf measure preserving} for a map $\omega$ between measure spaces $(\Omega_1,\A_1,m_1)$ and $(\Omega_2,\A_2,m_2)$ verifying,
$$\forall A\in \A_1, \omega^{-1}(A) \in \A_2\ \text{and}\ m_2(\omega^{-1}(A)) = m_1(A).$$

\begin{lem}\label{lem: construct omega}
Assume that $\Omega$ is $(0,1)$, $(0,\infty)$ or $\Nb$. Let $0\leq f,g\in L_0(\Omega) + L_\infty(\Omega)$ and let $\varepsilon>0.$ Assume that $\mu(f) = \mu(g)$. There exists a measure preserving map $\omega:{\rm supp}(g)\to \supp(f)$ such that $(1+\varepsilon)(f\circ \omega)\geq\mu(f).$
\end{lem}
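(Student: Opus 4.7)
The plan is to discretize both $g$ and $f$ into level sets on a geometric scale of ratio $1+\varepsilon$, to observe that the corresponding level sets on $\supp(g)$ and $\supp(f)$ have the same measure (by $\mu(f)=\mu(g)$), and to paste together measure preserving bijections between matching level sets.

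First I would interpret the inequality $(1+\varepsilon)(f\circ\omega)\geq\mu(f)$ concretely on $\supp(g)$. Let $\sigma\colon\supp(g)\to[0,m(\supp g))$ be a measure preserving map rearranging $g$ decreasingly, so that $g=\mu(g)\circ\sigma$. Since $\mu(f)=\mu(g)$, we have $\mu(f)\circ\sigma=g$ pointwise on $\supp(g)$, and the desired inequality becomes
$$(1+\varepsilon)(f\circ\omega)(x)\geq g(x)\quad\text{for every }x\in\supp(g).$$

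For every $k\in\Zb$ I then set
$$E_k=\set{x\in\supp(g):(1+\varepsilon)^{k}\leq g(x)<(1+\varepsilon)^{k+1}},$$
$$F_k=\set{y\in\supp(f):(1+\varepsilon)^{k}\leq f(y)<(1+\varepsilon)^{k+1}}.$$
These give countable measurable partitions of $\supp(g)$ and $\supp(f)$, and the identity $m(\{g>s\})=m(\{f>s\})$ (equivalent to $\mu(f)=\mu(g)$) implies $m(E_k)=m(F_k)$ for every $k\in\Zb$. For each $k$ one may pick a measure preserving bijection $\omega_k\colon E_k\to F_k$: when $\Omega=(0,1)$ or $(0,\infty)$, $E_k$ and $F_k$ are standard non-atomic measurable sets of equal (possibly infinite) measure, so the existence of such a bijection is a standard fact from measure theory; when $\Omega=\Nb$ with counting measure, they are countable sets of equal cardinality and any set-theoretic bijection is measure preserving.

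Gluing the $\omega_k$ together yields a measure preserving bijection $\omega\colon\supp(g)\to\supp(f)$. For $x\in E_k$ one has $f(\omega(x))\geq(1+\varepsilon)^{k}$ and $g(x)<(1+\varepsilon)^{k+1}$, hence
$$(1+\varepsilon)f(\omega(x))\geq(1+\varepsilon)^{k+1}>g(x),$$
which is the inequality we wanted. The only step requiring some care is the initial identification of $\mu(f)$ with a function on $\supp(g)$ and the uniform treatment of the three cases for $\Omega$; once these are set up the combinatorial construction is essentially forced.
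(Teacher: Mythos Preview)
Your approach matches the paper's Case~1 (where $\mu(\infty,f)=0$), and in that regime the argument is correct. The gap is in the claim that $m(E_k)=m(F_k)$ for every $k$: this fails when $a:=\mu(\infty,f)=\mu(\infty,g)>0$. The identity $m(\{g>s\})=m(\{f>s\})$ determines $m(E_k)$ only when $m(\{g\geq(1+\varepsilon)^{k+1}\})<\infty$, i.e.\ when $(1+\varepsilon)^{k+1}>a$; for smaller $k$ both superlevel sets have infinite measure and the measure of their difference is not determined by the common distribution function. Concretely, on $\Omega=(0,\infty)$ take $f\equiv1$ and $g=\chi_{(0,1)\cup(2,\infty)}+\tfrac12\chi_{(1,2)}$: then $\mu(f)=\mu(g)\equiv1$, yet for the index $k$ with $(1+\varepsilon)^k\leq\tfrac12<(1+\varepsilon)^{k+1}$ (and $\varepsilon<1$) one has $E_k=(1,2)$ while $F_k=\varnothing$, so there is no map $E_k\to F_k$ at all and your $\omega$ cannot even be defined on all of $\supp(g)$.

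The paper fixes this by treating the case $a>0$ separately: instead of slicing all the way down, it collects $\{0<g\leq(1+\delta)a\}$ into a single piece $G_0$ and maps it to $F_0=\{(1+\delta)^{-1}a\leq f\leq(1+\delta)a\}$ (both of infinite measure, so a measure preserving bijection exists), where $(1+\delta)^2=1+\varepsilon$. Since $f\geq(1+\delta)^{-1}a$ on $F_0$ and $g\leq(1+\delta)a$ on $G_0$, one gets $(1+\varepsilon)(f\circ\omega)\geq g$ on $G_0$; for $n\geq1$ the level sets are defined as in your construction and now lie strictly above $a$, so the measures do match. This regrouping of the low values is the ingredient your argument is missing.
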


\begin{proof} 
{\bf Case 1:} Suppose first that $\mu(\infty,f) = \mu(\infty,g) =0.$

Define, for any $n\in\mathbb{Z},$
$$F_n = \big\{t:(1+\e)^n<f(t)\leq (1+\e)^{n+1}\big\},\quad G_n = \big\{t:(1+\e)^n<g\leq (1+\e)^{n+1}\big\}.$$
By assumption, $m(F_n)=m(G_n)$ for every $n\in\mathbb{Z}.$ Let $\omega_n:G_n\to F_n$ be an arbitrary measure preserving bijection.

\begin{sloppypar}
Define measure preserving map $\omega:{\rm supp}(g)\to{\rm supp}(f)$ by concatenating  \mbox{$\omega_n:G_n\to F_n,$ $n\in\mathbb{Z}.$} For every $t\in G_n,$ we have
$$f(\omega(t))\geq (1+\e)^n,\quad g\leq (1+\e)^{n+1}.$$
Thus,
$$(1+\e)f(\omega(t))\geq g,\quad t\in{\rm supp}(g).$$
This completes the proof of Step 1.
\end{sloppypar}

{\bf Case 2.} Let $\delta$ such that $(1+\delta)^2 = (1+\e)$. Let $a = \mu(\infty,f) = \mu(\infty,g) >0$. Define for any $n\geq 1$
$$F_n = \big\{t:a(1+\delta)^n<f(t)\leq a(1+\delta)^{n+1}\big\},\quad G_n = \big\{t:a(1+\delta)^n<g\leq a(1+\delta)^{n+1}\big\},$$
and
$$F_0 = \big\{t:(1+\delta)^{-1}a \leq f(t)\leq (1+\delta)a\big\},\quad G_0 = \big\{t:0 < g\leq (1+\delta)a\big\}.$$
By asumption for any $n\geq 1$, $m(G_n) = m(F_n)$ and $m(G_0) = m(F_0) = \infty$. For any $n\geq 0$, choose a measure preserving bijection $\omega_n$ from $G_n$ to $F_n$.

Define the measure preserving map $\omega : \supp(g) \to \supp(f)$ by concatenating the $\omega_n$'s. For any $n\geq 0$ and any $t \in G_n$,
$$f(\omega(t))\geq a(1+\delta)^{n-1},\quad g\leq a(1+\delta)^{n+1}.$$
Thus,
$$(1+\delta)^2f(\omega(t)) = (1+\e)f(\omega(t)) \geq g,\quad t\in{\rm supp}(g).$$
\end{proof}

\begin{lem}\label{interpolation implies symmetry} Assume that $\Omega$ is $(0,1)$, $(0,\infty)$ or $\Nb$. Let $E,A,B \subset (L_0+L_\infty)(\Omega)$ be quasi-Banach function spaces. Assume that $A$ and $B$ are symmetric and that $E$ is an interpolation space for the couple $(A,B)$. Then $E$ admits an equivalent symmetric quasi-norm.
\end{lem}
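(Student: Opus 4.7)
The goal is to construct an equivalent symmetric quasi-norm on $E$. The plan is to first establish the key estimate: if $f \in E$ and $g \in L_0(\Omega)+L_\infty(\Omega)$ satisfies $\mu(g) \leq \mu(f)$, then $g \in E$ with $\norm{g}_E \leq C\norm{f}_E$ for a constant $C = C(A,B,E)$ independent of $f$ and $g$. Granting this, I will renorm by
$$\norm{f}^*_E := \sup\set{\norm{h}_E\ :\ h \in E,\ \mu(h) \leq \mu(f)},$$
whose symmetric-quasi-norm properties will follow by formal arguments.

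For the key estimate, I handle first the equality case $\mu(f) = \mu(g)$: Lemma \ref{lem: construct omega}, applied to $|f|, |g|$, produces for any $\e > 0$ a measure preserving map $\omega : \supp(g) \to \supp(f)$ such that $(1+\e)(|f| \circ \omega) \geq |g|$ pointwise on $\supp(g)$ (this is what the proof of the lemma actually establishes). Define the composition operator $T_\omega$ on $\X(\Omega)$ by $(T_\omega \phi)(t) := \phi(\omega(t))\, \Ind_{\supp(g)}(t)$. Because $\omega$ is measure preserving, $m(\{|T_\omega \phi| > \lambda\}) \leq m(\{|\phi| > \lambda\})$ for every $\lambda > 0$, so $\mu(T_\omega \phi) \leq \mu(\phi)$, and $T_\omega$ is a contraction on each of the symmetric spaces $A$ and $B$. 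Theorem \ref{rem:def interpolation} then furnishes a bound $\norm{T_\omega}_{E \to E} \leq c := c(A,B,E)$ independent of $\omega$, and the pointwise domination $|g| \leq (1+\e)\, T_\omega |f|$ combined with the quasi-Banach function space property of $E$ yields $\norm{g}_E \leq (1+\e)\, c\, \norm{f}_E$. For the general case $\mu(g) \leq \mu(f)$, a standard pointwise rearrangement produces $\tilde g \in \X(\Omega)$ with $|g| \leq |\tilde g|$ and $\mu(\tilde g) = \mu(f)$, reducing matters to the equality case.

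With the key estimate in hand, symmetry of $\norm{\cdot}^*_E$ and the monotonicity $|g| \leq |f| \Rightarrow \norm{g}^*_E \leq \norm{f}^*_E$ are immediate from the definition. The equivalence $\norm{f}_E \leq \norm{f}^*_E \leq C\norm{f}_E$ follows by taking $h = f$ on one side and invoking the key estimate on the other. For the quasi-triangle: for any $h$ with $\mu(h) \leq \mu(f+g)$, the key estimate together with the quasi-norm inequality on $E$ yields
$$\norm{h}_E \leq C \norm{f+g}_E \leq C K_E \p{\norm{f}^*_E + \norm{g}^*_E},$$
where $K_E$ denotes the quasi-norm modulus of $\norm{\cdot}_E$; taking the supremum over $h$ gives the quasi-triangle for $\norm{\cdot}^*_E$. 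Completeness is inherited from the equivalence. The main obstacle is the equality case of the key estimate, and more precisely the verification that the operator $T_\omega$ built from Lemma \ref{lem: construct omega} is a contraction on the couple $(A,B)$; once this is in place, the symmetric renorming follows formally.
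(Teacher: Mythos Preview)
Your proof is correct and follows the same approach as the paper: use Lemma \ref{lem: construct omega} to produce a composition-type operator bounded on $(A,B)$, invoke the interpolation constant from Theorem \ref{rem:def interpolation} to get the key estimate, and then renorm. The only cosmetic differences are that the paper uses the weighted composition $Th=\dfrac{g}{f\circ\omega}(h\circ\omega)$ (which sends $f$ exactly to $g$, with norm $\leq 2$ on $(A,B)$) in place of your bare composition plus pointwise domination, and renorms via the infimum $\|f\|_{E'}=\inf_{\mu(g)\geq\mu(f)}\|g\|_E$ rather than your supremum.
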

\begin{proof}
Let $f\in E$ and $g\in L_0+L_\infty$. Assume that $\mu(g) \leq \mu(f)$. By Lemma \ref{lem: construct omega}, there exists a map $\omega : \supp(g) \to \supp(f)$ such that for any $t\in \supp(g)$,
$$2\md{f \circ \omega (t)} \geq \md{g(t)}.$$
Define, for any $h \in\X(\Omega)$,
$$
T(h) :=
\begin{cases}
\dfrac{g}{f\circ \omega} h \circ \omega &\text{on}\ \supp(g) \\
0 &\text{elsewhere.}
\end{cases}
$$
Since $\omega$ is measure preserving, $T$ is bounded on $A$ and $B$ of norm less than $2$. Let $c_E$ be the interpolation constant of $E$ for the couple $(A,B)$ (as in Theorem \ref{rem:def interpolation}). We know that $Tf = g \in E$ and
\begin{equation}\label{eq : symmetry}
\norm{g} \leq 2c_E\norm{f}.
\end{equation}
Define, for any $f\in E$
$$\norm{f}_{E'} = \inf_{\mu(g) \geq \mu(f)} \norm{g}_E.$$
By \eqref{eq : symmetry}, $\norm{f}_{E'} \leq \norm{f}_E \leq 2c_E \norm{f}_{E'}$ and $(E,\norm{\cdot}_{E'})$ is a symmetric space.
\end{proof}

\begin{rem}
\rm
It is not difficult to see that if the underlying measure space $\Omega$ contains both a continuous part and atoms, Lemma \ref{interpolation implies symmetry} is no longer true for $A = L_p(\Omega)$, $B = L_q(\Omega)$ and $p < 1$. However, one can observe that if $A$ and $B$ are {\it fully symmetric} ({\it i.e.} interpolation spaces between $L_1(\Omega)$ and $L_\infty(\Omega)$), Lemma \ref{interpolation implies symmetry} remains valid for any $\Omega$. This is reminiscent of the conditions required in \cite[Section 4]{Spa78}.
\end{rem}

\section{Interpolation for the couple $(L_0,L_q)$}

In this section, $\Omega=(0,\infty)$ (for brevity, we omit $\Omega$ in the notations). We investigate some basic properties of the interpolation couple $(L_0,L_q).$ First, we provide a statement analogous to Theorem \ref{rem:def interpolation} and applicable to $L_0$. 

Since the closed graph theorem does not apply to $L_0$ (it is not an $F$-space), our proof uses a concrete constructions that relies on the structure of the underlying measure space.

For any $f\in \X$, denote by $M_f$ the multiplication operator $g \mapsto f\cdot g$.

\begin{thm}\label{thm: inter L_0}
Let $E$ be a quasi-Banach function space and $q\in (0,\infty]$. Assume that $E$ is an interpolation space for the couple $(L_0,L_q)$. Then, there exists a constant $c$ such that for any contraction $T$ on $(L_0,L_q)$, $\norm{T}_{E\to E} \leq c$.
\end{thm}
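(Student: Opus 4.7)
The plan is to argue by contradiction via a direct-sum construction exploiting the self-similarity of $(0,\infty)$. Suppose the conclusion fails: there exist contractions $T_n$ on $(L_0,L_q)$ and functions $f_n\in E$ with $\|f_n\|_E\leq 1$ and $\|T_n f_n\|_E\geq 4^n$. Fix a decomposition $(0,\infty)=\bigl(\bigsqcup_n A_n\bigr)\sqcup\bigl(\bigsqcup_n B_n\bigr)$ into pairwise disjoint Borel sets of infinite Lebesgue measure, together with measure-preserving bijections $\alpha_n\colon A_n\to(0,\infty)$ and $\beta_n\colon B_n\to(0,\infty)$.

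Assemble the $T_n$'s into a single operator on $L_0+L_q$ by
\[
Th := \sum_n \bigl(T_n(h|_{A_n}\circ \alpha_n^{-1})\bigr)\circ \beta_n \cdot \Ind_{B_n},
\]
extended by zero outside $\bigsqcup_n B_n$. Because the input restrictions $h|_{A_n}$ have pairwise disjoint supports and the outputs land on disjoint targets $B_n$, a direct computation gives $\|Th\|_q^q\leq\sum_n\|h|_{A_n}\|_q^q\leq\|h\|_q^q$ and $\|Th\|_0\leq\sum_n m(\supp(h|_{A_n}))\leq\|h\|_0$, so $T$ is a contraction on $(L_0,L_q)$. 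The interpolation hypothesis then forces $\|T\|_{E\to E}=:M<\infty$.

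To derive a contradiction, one applies $T$ to a superposition $h=\sum_n c_n \tilde f_n$, where $\tilde f_n:=(f_n\circ \alpha_n^{-1})\Ind_{A_n}$ is the transport of $f_n$ to $A_n$ and $c_n\downarrow 0$ is to be chosen. By construction, $Th=\sum_n c_n (T_n f_n\circ \beta_n)\Ind_{B_n}$ is a disjoint sum; for each $n$, the $E$-norm of the $n$-th summand is at least $c_n\cdot\|T_n f_n\|_E/\gamma_n\geq c_n\cdot 4^n/\gamma_n$, where $\gamma_n$ measures the $E$-norm distortion under the transport $\beta_n$. Picking $c_n$ so that $h$ still lies in $E$ with controlled norm while $c_n\cdot 4^n/\gamma_n\to\infty$ contradicts $\|T\|_{E\to E}=M$.

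The main obstacle lies in the lack of assumed symmetry of $E$: the transports $\tilde f_n$ and $T_n f_n\circ\beta_n$, although equidistributed with $f_n$ and $T_n f_n$, may have very different $E$-norms, and the relevant distortion constants $\gamma_n$ are precisely the objects that the present theorem is meant to uniformly control. A circular use of symmetry must therefore be avoided. I would handle this by specializing the $f_n$'s to a restricted class, for instance positive functions supported in a fixed finite-measure set, and by tailoring the partition $\{A_n\}$ so that pointwise domination, combined with the ideal property of quasi-Banach function spaces, yields an unconditional bound on $\|h\|_E$ independent of any symmetry hypothesis on $E$. The rapid growth $\|T_n f_n\|_E\geq 4^n$ then closes the argument.
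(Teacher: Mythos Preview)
Your overall architecture (contradiction via a direct-sum operator on disjoint pieces) matches the paper's, and you correctly identify the crux: without symmetry of $E$, transporting $f_n$ to $\tilde f_n$ and $T_nf_n$ to $(T_nf_n)\circ\beta_n$ introduces distortion constants $\gamma_n$ that you have no a priori control over. However, your proposed fix---restricting the $f_n$'s to a special class and ``tailoring the partition'' so that pointwise domination handles $\|h\|_E$---does not close the gap. The $f_n$'s are not yours to choose: they are witnesses to $\|T_n\|_{E\to E}\geq 4^n$, and nothing guarantees such witnesses live in any prescribed finite-measure set or admit a common pointwise majorant in $E$. Likewise, on the output side you still need $\|(T_nf_n)\circ\beta_n\|_E$ comparable to $\|T_nf_n\|_E$, and the ideal property alone cannot supply that.

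The paper resolves this by conjugating \emph{operators} rather than transporting \emph{functions}. Fix the partition $(A_n)$ and the associated transport operators $U_n,V_n$ first; since these are bounded on $(L_0,L_q)$ and $E$ is an interpolation space, each $U_n,V_n$ is bounded on $E$ with some (possibly huge) norm. Only \emph{then} choose $T_n$ so that $\|T_n\|_{E\to E}$ dominates $4^n\max\{\|U_n\|_{E\to E},\|V_n\|_{E\to E},1\}^2$. Writing $T_n$ as a sum of four pieces $M_{\chi_{A_n}}T_nM_{\chi_{A_n}}$, $V_n(U_nT_nM_{\chi_{A_n}})$, etc., one of these pieces $S_n$ satisfies $S_n=M_{\chi_{A_n}}S_nM_{\chi_{A_n}}$ and still has $\|S_n\|_{E\to E}\gtrsim 4^n$. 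Now the $S_n$'s are genuinely in direct sum with \emph{no transport needed}: the witness $f_n$ may be taken supported on $A_n$, and $S(\,f_n)=S_n(f_n)$ directly. The distortion constants are thus absorbed into the choice of $T_n$, using the interpolation hypothesis a second time. This is the missing idea in your argument.
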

\begin{proof} Let $(A_n)_{n\geq1}$ be a partition of $(0,\infty)$ such that $m(A_n)=\infty$ for every $n\geq1.$
       
Let $\gamma_n:A_n\to A_n^c$ be a measure preserving bijective transform. Set
$$(U_nx)(t)=
\begin{cases}
x(\gamma_n(t)),& t\in A_n\\
0,&t\in A_n^c
\end{cases},\quad
(V_nx)(t)=
\begin{cases}
x(\gamma_n^{-1}(t)),& t\in A_n^c\\
0,&t\in A_n
\end{cases}.
$$
Obviously, $U_n$ and $V_n$ are bounded operators on the couple $(L_0,L_q).$ By assumption, $U_n,V_n:E\to E$ are bounded mappings.

Note that
$$V_nU_n=M_{\chi_{A_n^c}},\quad n\geq1.$$

Let us argue by contradiction. For any $n\geq 1$, choose an operator $T_n$ which is a contraction on $(L_0,L_q)$ and such that
\begin{equation}\label{intl0 eq0}
\|T_n\|_{E\to E}\geq 4^n\cdot \max\{\|U_n\|_{E\to E},\|V_n\|_{E\to E},1\}^2.
\end{equation}

It is immediate that
$$T_n=M_{\chi_{A_n}}T_nM_{\chi_{A_n}}+M_{\chi_{A_n^c}}T_nM_{\chi_{A_n}}+M_{\chi_{A_n}}T_nM_{\chi_{A_n^c}}+M_{\chi_{A_n^c}}T_nM_{\chi_{A_n^c}}=$$
$$=T_{1,n}+V_nT_{2,n}+T_{3,n}U_n+V_nT_{4,n}U_n,$$
where
\begin{align*}
T_{1,n} = M_{\chi_{A_n}}T_nM_{\chi_{A_n}},
&\quad T_{2,n}= U_nT_nM_{\chi_{A_n}},\\
\quad T_{3,n}=M_{\chi_{A_n}}T_nV_n,&\quad T_{4,n}=U_nT_nV_n.
\end{align*}
By quasi-triangle inequality, we have
$$\|T_n\|_{E\to E}\leq C_E^2\cdot\Big(\sum_{k=1}^4\|T_{k,n}\|_{E\to E}\Big)\cdot\max\{\|U_n\|_{E\to E},\|V_n\|_{E\to E}.1\}^2.$$
Let $k_n\in\{1,2,3,4\}$ be such that
$$\|T_{k_n,n}\|_{E\to E}=\max_{1\leq k\leq 4}\|T_{k,n}\|_{E\to E}.$$
We, therefore, have
\begin{equation}\label{intl0 eq1}
\|T_n\|_{E\to E}\leq 4C_E^2\|T_{k_n,n}\|_{E\to E}\cdot\max\{\|U_n\|_{E\to E},\|V_n\|_{E\to E}.1\}^2.
\end{equation}

Set $S_n=T_{k_n,n}.$ Note that $\|S_n\|_{L_0\to L_0}\leq 1,$ $\|S_n\|_{L_q\to L_q}\leq 1.$ A combination of \eqref{intl0 eq0} and \eqref{intl0 eq1} yields
$$\|S_n\|_{E\to E}\geq 4^{n-1}C_E^{-2},\quad n\geq1.$$
       
Note that
$$S_n=M_{\chi_{A_n}}S_nM_{\chi_{A_n}}.$$
Set
$$S=\sum_{n\geq1}S_n.$$
\begin{sloppypar}

Since the $S_n$'s are in direct sum, \mbox{$\|S\|_{L_0\to L_0} = \sup_{n\geq 1} \norm{S_n}_{L_0 \to L_0} \leq 1$} and \mbox{$\|S\|_{L_q\to L_q} = \sup_{n\geq 1} \norm{S_n}_{L_q\to L_q} \leq 1.$}

Moreover, $E$ is an interpolation space for the couple $(L_0,L_q),$ it follows that $S:E\to E$ is bounded.

For any $n\geq 1$, choose $f_n\in E$ such that \mbox{$\|f_n\|_E\leq 1$} and \mbox{$\|S_nf_n\|_E\geq 4^{n-2}C_E^{-2}.$} Recall that $S_n=S_nM_{\chi_{A_n}}.$ Hence, we may assume without loss of generality that $f_n$ is supported on $A_n.$ Thus, $S(f_n)=S_n(f_n)$ and
$$\norm{S(f_n)}_E = \norm{S_n(f_n)}_E \geq 4^{n-2}C_E^{-2}.$$
This contradicts the boundedness of $S.$
\end{sloppypar}
\end{proof}

\begin{rem}
{\rm
Theorem \ref{thm: inter L_0} above remains true for other underlying measure spaces:
\begin{itemize}
\item {\it for sequence spaces}. Indeed, in the proof of Theorem \ref{thm: inter L_0}, we only use properties of the underlying measure space in the first sentence, namely when we consider a partition of $(0,\infty)$ into countably many sets, each of them isomorphic to $(0,\infty)$. Since a partition satisfying the same property exists for $\Zb_+$, Theorem \ref{thm: inter L_0} remains true for interpolation spaces between $\ell_0$ and $\ell_q$.
\item {\it for $(0,1)$}. The same general idea applies in this case but some modification have to be made because the maps $\gamma_n$ introduced in the proof cannot be assumed to be measure preserving. The details are left to the reader.
\end{itemize}
}
\end{rem}

\begin{lem}\label{interpolation implies symmetry second lemma} Let $E,Y\subset L_0+L_\infty$ be quasi-Banach function spaces. Assume that $Y$ is symmetric and that $E$ is an interpolation space for the couple $(L_0,Y).$  Then $E$ admits an equivalent symmetric quasi-norm.
\end{lem}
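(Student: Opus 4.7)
The plan is to adapt the proof of Lemma \ref{interpolation implies symmetry} almost verbatim, replacing the closed-graph-based Theorem \ref{rem:def interpolation} with Theorem \ref{thm: inter L_0}. First I would observe that the proof of Theorem \ref{thm: inter L_0} carries over to the couple $(L_0,Y)$ for any symmetric quasi-Banach function space $Y$: the operators $U_n$ and $V_n$ constructed there arise from measure-preserving bijections and are therefore isometries on any symmetric $Y$, and the rest of the argument (partition, direct-sum operator $S$) uses nothing about $L_q$. Thus there exists $c>0$ depending only on $E$ and $Y$ such that every operator $T$ which is a contraction on both $L_0$ and $Y$ satisfies $\|T\|_{E\to E}\leq c$.

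Given $f\in E$ and $g\in L_0+L_\infty$ with $\mu(g)\leq\mu(f)$, I would invoke Lemma \ref{lem: construct omega} (or the obvious variant in which $\omega$ is taken as a measure-preserving injection rather than a bijection, obtained by choosing $\omega_n:G_n\to F_n$ measure-preserving but not surjective when $m(G_n)<m(F_n)$) to produce a measure-preserving map $\omega:\supp(g)\to\supp(f)$ with $2\md{f\circ\omega}\geq\md g$. Define
$$T(h)=\begin{cases}\dfrac{g}{f\circ\omega}\cdot(h\circ\omega) & \text{on }\supp(g),\\ 0 & \text{elsewhere.}\end{cases}$$
Since $\md{g/(f\circ\omega)}\leq 2$ and $\omega$ is measure preserving, symmetry of $Y$ yields $\|T\|_{Y\to Y}\leq 2$; on the other hand $\supp(Th)\subset\omega^{-1}(\supp h)$ has the same measure as $\supp h$, so $\|T\|_{L_0\to L_0}\leq 1$.

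The key point that unlocks the argument is that the group norm on $L_0$ is invariant under multiplication by nonzero scalars, so $T/2$ remains a contraction on $L_0$ while being a contraction on $Y$. The extended Theorem \ref{thm: inter L_0} therefore gives $\|T/2\|_{E\to E}\leq c$; combined with the homogeneity of the quasi-norm of $E$ this yields $\|T\|_{E\to E}\leq 2c$, and since $Tf=g$ we conclude $\|g\|_E\leq 2c\|f\|_E$. The equivalent symmetric quasi-norm is then built exactly as in Lemma \ref{interpolation implies symmetry}, by setting
$$\|f\|_{E'}=\inf_{\mu(h)\geq\mu(f)}\|h\|_E,$$
which satisfies $\|f\|_{E'}\leq\|f\|_E\leq 2c\|f\|_{E'}$ and is manifestly symmetric.

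The main technical nuisance I anticipate is the asymmetric roles of the two endpoints: the $Y$-bound on $T$ is only $2$ and cannot be improved by rescaling $Y$ alone, while the $L_0$-bound is already optimal and unaffected by any rescaling. It is precisely the scale-invariance of the $L_0$ group norm, contrasted with the homogeneity of the $E$ quasi-norm, that makes the $T\mapsto T/2$ trick legitimate and produces the finite interpolation constant. A minor secondary point is checking that Theorem \ref{thm: inter L_0}, stated only for the couple $(L_0,L_q)$, does extend as claimed to arbitrary symmetric $Y$; inspection of its proof shows that it does, since it relies exclusively on the existence of a partition of $(0,\infty)$ into infinitely many infinite-measure pieces and the action of the induced rearrangement operators on symmetric spaces.
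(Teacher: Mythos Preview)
Your approach is precisely what the paper intends: its proof consists of the single sentence ``The argument follows that in Lemma \ref{interpolation implies symmetry} {\it mutatis mutandi},'' and you have correctly identified the required modifications---replacing Theorem \ref{rem:def interpolation} by an $(L_0,Y)$-analogue of Theorem \ref{thm: inter L_0}, and exploiting the scale-invariance of the $L_0$ group-norm via the $T\mapsto T/2$ trick to reduce to a bi-contraction.

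One point is less automatic than you suggest. In the proof of Theorem \ref{thm: inter L_0}, the line $\|S\|_{L_q\to L_q}=\sup_{n\geq1}\|S_n\|_{L_q\to L_q}$ relies on the fact that the $L_q$-norm of a disjointly supported sum is the $\ell_q$-combination of the norms of the pieces. For a general symmetric quasi-Banach $Y$ it is not immediate that the direct sum $S=\bigoplus_n S_n$ of localized $(L_0,Y)$-contractions remains bounded on $Y$, and your remark that ``the rest of the argument uses nothing about $L_q$'' glosses over this. The paper's one-line proof does not address the point either, and in practice the lemma is only invoked later with $Y=L_q$, where the issue disappears.
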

\begin{proof} The argument follows that in Lemma \ref{interpolation implies symmetry} {\it mutatis mutandi}.
\end{proof}

The following assertion is a special case Theorem \ref{thm : intro inter} and an important ingredient in the proof of the latter theorem.

\begin{thm}\label{cor: taill to int}
Let $E$ be a quasi-Banach function space and $q\in (0,\infty)$. Assume that $L_0\cap L_q\subset E\subset L_0+ L_q$ and that for any $f\in E$ and $g\in L_0+L_q,$
$$\md{g}^q \taill \md{f}^q \Rightarrow g\in E.$$
Then $E$ is an interpolation space for the couple $(L_0,L_q).$
\end{thm}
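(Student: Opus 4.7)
My plan is to establish the key tail-majorization estimate $\md{Tf}^q \taill \md{f}^q$ for every contraction $T$ on $(L_0,L_q)$, deduce $T(E)\subset E$ directly from the hypothesis, upgrade the set inclusion to boundedness $T\colon E\to E$ via the closed graph theorem, and finally address arbitrary bounded operators by a rescaling refinement of the same decomposition.

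For the main estimate, let $T$ be a contraction on $(L_0,L_q)$ and $f \in L_0+L_q$. Given $u>0$, I would choose a measurable set $A\subset(0,\infty)$ of measure $u$ contained in $\set{\md{f}\ge\mu(u,f)}$, which exists because the underlying measure is diffuse. Setting $g=f\chi_A$ and $h=f\chi_{A^c}$ gives $\norm{g}_0=u$ and, by a direct distribution-function computation, $\mu(s,h)=\mu(s+u,f)$, whence $\norm{h}_q^q=\int_u^\infty \mu(s,f)^q\,ds$. Contractivity of $T$ on $L_0$ yields $\norm{Tg}_0\le u$, so $\mu(u,Tg)=0$, and the subadditivity $\mu(u+s,Tf)\le\mu(u,Tg)+\mu(s,Th)=\mu(s,Th)$ integrates to
\[
\int_u^\infty \mu(r,Tf)^q\,dr \;\le\; \int_0^\infty \mu(s,Th)^q\,ds \;=\; \norm{Th}_q^q \;\le\; \norm{h}_q^q \;=\; \int_u^\infty \mu(s,f)^q\,ds,
\]
which is exactly $\md{Tf}^q \taill \md{f}^q$. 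The hypothesis then forces $Tf\in E$.

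To upgrade this to boundedness of $T\colon E\to E$, I would invoke the closed graph theorem: the inclusion $E\hookrightarrow L_0+L_q$ is continuous because quasi-norm convergence in $E$ gives pointwise a.e.\ convergence of a subsequence, which must agree with the $L_0+L_q$-limit, and $T$ is continuous on $L_0+L_q$. For a general bounded $T$ with $L_0$-norm $M_0$ and $L_q$-norm $M_q$, the same decomposition, now taken with $\norm{g}_0=u/M_0$, delivers the weaker bound $\int_u^\infty \mu(r,Tf)^q\,dr \le M_q^q \int_{u/M_0}^\infty \mu(s,f)^q\,ds$; when $M_0\le 1$ this collapses to $\md{Tf}^q \taill \md{M_q f}^q$ and the argument goes through. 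The case $M_0>1$ will be the main obstacle: the right-hand side then records a dilation of $\md{f}^q$ by $M_0$, and to conclude $Tf\in E$ one must absorb this dilation. I expect to do so by bootstrapping: the hypothesis directly yields invariance of $E$ under compressing dilations $\sigma_\alpha$ with $\alpha\le 1$ (since $\md{\sigma_\alpha f}^q \taill \md{f}^q$ in that range), and combining this with the contraction case applied to $\sigma_{1/M_0}T$ should permit one to close the argument without further hypotheses on $E$.
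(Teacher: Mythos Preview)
Your outline matches the paper's strategy closely: the tail-majorization estimate for contractions is exactly Lemma~\ref{l0lq bicontraction lemma}, and the overall skeleton (contractions $\Rightarrow$ set inclusion $\Rightarrow$ boundedness via closed graph $\Rightarrow$ general bounded operators via dilation) is the same. However, two of your steps contain genuine gaps.

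\textbf{The closed graph step.} Your justification ``$T$ is continuous on $L_0+L_q$'' does not stand. The space $L_0+L_q$ carries no quasi-norm (the $L_0$-functional is not homogeneous), and there is no off-the-shelf topology on it in which a generic $(L_0,L_q)$-contraction is automatically continuous; in particular, local convergence in measure is not preserved by such $T$. The paper has to work for this: after renorming $E$ to be symmetric, it proves a quantitative embedding $\|\mu(f)\chi_{(1,\infty)}\|_q\le c_E\|f\|_E$ (Lemma~\ref{hvost}) and then splits each $f_n$ into three pieces (one with small support, one uniformly small, one with small $L_q$-tail) to force $Tf_n\to 0$ in measure directly. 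Your one-line appeal to continuity does not substitute for this.

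\textbf{The reduction to contractions.} Your proposed operator $\sigma_{1/M_0}T$ is an $L_0$-contraction but its $L_q$-norm is $M_0^{-1/q}M_q$, which need not be $\le 1$; so the contraction case does not apply to it. More importantly, to recover $T$ you must post-compose with the \emph{expanding} dilation $\sigma_{M_0}$, and expanding dilations do \emph{not} satisfy $|\sigma_c f|^q\taill |f|^q$ --- only the compressing ones do. The paper handles this by taking $n\ge\max(M_0,M_q)$, setting $S=\tfrac1n\,\sigma_{1/n}\circ T$ (so that both the scalar and the dilation contribute, giving $\|S\|_{L_0\to L_0}\le 1$ and $\|S\|_{L_q\to L_q}\le n^{-1/q}\cdot n\le 1$), and then invoking the \emph{symmetry} of $E$ to get $\sigma_n\colon E\to E$ bounded. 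That symmetry is itself a consequence of the contraction case already established (via the construction in Lemma~\ref{lem: construct omega}), so the argument closes --- but not through the tail-majorization hypothesis alone, as your bootstrapping sketch suggests.
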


The rest of this section is occupied with the proof of Theorem \ref{cor: taill to int}.

Let $C_E$ be the concavity modulus of $E,$ that is, (the minimal) constant such that
$$\|x_1+x_2\|_E\leq C_E(\|x_1\|_E+\|x_2\|_E),\quad x_1,x_2\in E.$$
It allows to write a triangle inequality with infinitely many summands. This inequality should be understood in the usual sense: if scalar-valued series in the right hand side converges, then the series on the left hand side converges in $E$ and the inequality holds.

\begin{lem}\label{infinite triangle inequality} Let $E$ be a quasi-Banach space. We have
$$\|\sum_{n\geq1}x_n\|_E\leq\sum_{n\geq1}C_E^n\|x_n\|_E,\quad (x_n)_{n\geq1}\subset E.$$
\end{lem}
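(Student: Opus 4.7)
The plan is to reduce to the finite-sum version by induction on the number of summands, then pass to infinite sums using completeness of $E$ together with continuity of the quasi-norm.

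First, I would prove by induction on $N\geq 1$ that
$$\|\sum_{n=1}^N x_n\|_E \leq \sum_{n=1}^N C_E^n \|x_n\|_E.$$
The base case $N=1$ reduces to $\|x_1\|_E\leq C_E\|x_1\|_E$, which holds because $C_E\geq 1$ (setting $x_2=0$ in the definition of $C_E$). For the inductive step, I would peel off the first summand via the quasi-triangle inequality,
$$\|\sum_{n=1}^{N+1} x_n\|_E \leq C_E\|x_1\|_E + C_E\|\sum_{n=2}^{N+1} x_n\|_E,$$
apply the inductive hypothesis to the shifted sequence $(x_{n+1})_{n=1}^N$ to obtain $\|\sum_{n=2}^{N+1} x_n\|_E\leq \sum_{n=2}^{N+1} C_E^{n-1}\|x_n\|_E$, and collect terms to recover the required bound.

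Next, assuming $\sum_{n\geq 1} C_E^n\|x_n\|_E<\infty$, I would verify that the partial sums $S_N=\sum_{n=1}^N x_n$ are Cauchy in $E$: applying the finite inequality to the block $\sum_{n=M+1}^N x_n$ gives
$$\|S_N-S_M\|_E \leq \sum_{n=M+1}^N C_E^{n-M}\|x_n\|_E \leq \sum_{n=M+1}^N C_E^{n}\|x_n\|_E,$$
where the second inequality uses $C_E\geq 1$. The right-hand side is a tail of a convergent series, so $(S_N)$ is Cauchy; by completeness of $E$ it converges to some $S\in E$.

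Finally, to transfer the norm bound from $S_N$ to $S$, I would use the Aoki-Rolewicz theorem, which furnishes an equivalent $p$-norm on $E$ (for some $p\in(0,1]$) under which $\|\cdot\|_E$ is a continuous function on $E$. Then $\|S\|_E=\lim_N\|S_N\|_E\leq \sum_{n\geq 1} C_E^n\|x_n\|_E$, as required. The main delicate point is precisely this last step: a naive application of the quasi-triangle inequality $\|S\|_E\leq C_E(\|S_N\|_E+\|S-S_N\|_E)$ would produce an extra factor of $C_E$ and hence only the weaker bound $\sum_n C_E^{n+1}\|x_n\|_E$; continuity of the quasi-norm, a standard consequence of Aoki-Rolewicz, is what eliminates this surplus factor.
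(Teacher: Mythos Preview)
The paper omits the proof as standard, and your outline---induction for finite sums, then the Cauchy criterion for convergence of the partial sums $S_N$---is indeed the standard route; those two steps are correct.

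The last step, however, has a genuine gap: continuity of $\|\cdot\|_E$ is \emph{not} a consequence of Aoki--Rolewicz. That theorem furnishes an equivalent $p$-norm, and the $p$-norm is continuous, but equivalence of quasi-norms does not transfer continuity back to $\|\cdot\|_E$; a quasi-norm can fail to be continuous with respect to the topology it generates. So the identity $\|S\|_E=\lim_N\|S_N\|_E$ is unjustified, and as you yourself observe, the naive route only yields the weaker bound $\sum_{n}C_E^{\,n+1}\|x_n\|_E$.

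There is a clean fix that bypasses continuity entirely. Peel summands off the front of $S$ rather than of $S_N$: iterating the quasi-triangle inequality on $S=x_1+(S-S_1)=x_1+x_2+(S-S_2)=\cdots$ gives, for every $N$,
\[
\|S\|_E \;\le\; \sum_{n=1}^{N} C_E^{\,n}\|x_n\|_E \;+\; C_E^{\,N}\,\|S-S_N\|_E.
\]
Your own Cauchy estimate, combined with one further quasi-triangle inequality (write $S-S_N=(S_M-S_N)+(S-S_M)$ and let $M\to\infty$), yields $\|S-S_N\|_E\le \sum_{n>N}C_E^{\,n-N+1}\|x_n\|_E$, so that $C_E^{\,N}\|S-S_N\|_E\le\sum_{n>N}C_E^{\,n+1}\|x_n\|_E\to 0$. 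Letting $N\to\infty$ then gives the stated inequality with the sharp coefficients $C_E^{\,n}$, with no appeal to continuity of the quasi-norm.
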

\begin{proof} The proof is standard and is, therefore, omitted.
\end{proof}

\begin{lem}\label{hvost} Let $E\subset L_0+L_q$ be a symmetric quasi-Banach function space. If $\|f\|_E\leq 1,$ then $\|\mu(f)\chi_{(1,\infty)}\|_q\leq c_E.$
\end{lem}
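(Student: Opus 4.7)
The plan is to argue by contradiction: assuming the lemma fails, aggregate a sequence of hypothetical counterexamples into a single function $h\in E$ whose rearrangement satisfies $\int_c^\infty \mu(h)^q\,ds=\infty$ for every $c>0$; this will contradict $h\in L_0+L_q$, equivalently (as noted earlier) $\mu(h)\chi_{(c,\infty)}\in L_q$ for some $c>0$. First dispose of the degenerate case $\chi_{(0,1)}\notin E$: if some $f\in E$ satisfied $\mu(1,f)>0$, then $\mu(f)\geq \mu(1,f)\chi_{(0,1)}$, so symmetry together with the ideal property would force $\chi_{(0,1)}\in E$, contradiction. Hence in this case every $f\in E$ satisfies $\mu(1,f)=0$, giving $\mu(f)\chi_{(1,\infty)}\equiv 0$ and the lemma is trivial. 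Assume therefore $c_0:=1/\|\chi_{(0,1)}\|_E<\infty$.

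Suppose the lemma fails, and for each $n\geq 1$ choose $f_n\in E$ with $\|f_n\|_E\leq 1$ and $\|\mu(f_n)\chi_{(1,\infty)}\|_q\geq \alpha_n:=(2C_E)^n$; by symmetry replace $f_n$ by $\mu(f_n)$ so that $f_n$ is decreasing. The pointwise estimate $f_n(1)\|\chi_{(0,1)}\|_E\leq \|f_n\|_E$ yields the uniform bound $f_n(1)\leq c_0$. Now cap and truncate: set $\bar f_n:=\min(f_n,f_n(1))\chi_{(0,T_n)}$, where $T_n>1$ is chosen large enough that $\|\bar f_n\chi_{(1,T_n)}\|_q=\|f_n\chi_{(1,T_n)}\|_q\geq \alpha_n/2$; this is possible because $f_n\in L_0+L_q$ combined with $f_n(1)\leq c_0$ forces $\mu(f_n)\chi_{(1,\infty)}\in L_q$, so monotone convergence applies. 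Then $\bar f_n\leq f_n$ gives $\|\bar f_n\|_E\leq 1$, while the cap gives $\|\bar f_n\|_\infty\leq c_0$. Translate to disjoint supports: put $a_1=0$, $a_{n+1}:=a_n+T_n$, and $\tilde f_n(t):=\bar f_n(t-a_n)$, so by symmetry $\|\tilde f_n\|_E=\|\bar f_n\|_E\leq 1$ and $\|\tilde f_n\|_\infty\leq c_0$. Define $h:=\sum_{n\geq 1}(2C_E)^{-n}\tilde f_n$.

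By Lemma \ref{infinite triangle inequality}, $\|h\|_E\leq \sum_n C_E^n(2C_E)^{-n}\|\tilde f_n\|_E\leq \sum_n 2^{-n}=1$, hence $h\in E\subset L_0+L_q$, and there exists $c>0$ with $\int_c^\infty\mu(h)^q\,ds<\infty$. On the other hand, disjointness of the supports of $\tilde f_n$ yields
$$\int h^q=\sum_n (2C_E)^{-nq}\int \bar f_n^q\geq \sum_n (2C_E)^{-nq}(\alpha_n/2)^q=\sum_n 2^{-q}=\infty,$$
while the uniform bound $\|h\|_\infty\leq c_0$ (since at each point at most one $\tilde f_n$ is nonzero) gives $\int_0^c\mu(h)^q\leq c\cdot c_0^q<\infty$. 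Therefore $\int_c^\infty\mu(h)^q=\int h^q-\int_0^c\mu(h)^q=\infty$, a contradiction. The delicate step is the uniform control of $\|h\|_\infty$: without such control, divergence of $\int h^q$ would not force divergence of the tail integral $\int_c^\infty\mu(h)^q$ for every $c>0$. The cap at $f_n(1)$ combined with $f_n(1)\leq c_0$ is precisely the mechanism that overcomes this difficulty.
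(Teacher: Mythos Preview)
Your proof is correct and follows the same contradiction strategy as the paper: aggregate counterexamples via Lemma~\ref{infinite triangle inequality} into a single $h\in E$ whose $L_q$-tail is infinite, contradicting $E\subset L_0+L_q$. The technical constructions differ, however. The paper replaces each $f_n$ by the step function $h_n=\sum_{k\geq0}\mu(k{+}1,f_n)\chi_{(k,k+1)}$ and sums a subsequence $h_{m^n}$ \emph{overlapping} on $(0,\infty)$; the resulting $h$ is automatically decreasing and constant on $(0,1)$, so $h\in L_0+L_q$ forces $h\in L_q$ directly, and a single lower bound $\|h\|_q\geq(2C_E)^{-n}\|h_{m^n}\|_q$ finishes the argument. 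You instead cap at $f_n(1)$, truncate to finite support, and translate to \emph{disjoint} intervals; the uniform $L_\infty$-bound (obtained via $f_n(1)\leq 1/\|\chi_{(0,1)}\|_E$) controls the head of $\mu(h)$ and lets you isolate the tail. The paper's route is shorter---no cap, truncate, or translate, and no separate treatment of the degenerate case $\chi_{(0,1)}\notin E$---while your disjoint-support construction makes the computation of $\int h^q$ completely explicit and perhaps more transparent.
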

\begin{proof} Assume the contrary. Choose $(f_n)_{n\geq1}\in E$ such that $\|f_n\|_E\leq 1$ and $\|\mu(f_n)\chi_{(1,\infty)}\|_q\geq n.$ Set
$$h_n=\sum_{k\geq0}\mu(k+1,f_n)\chi_{(k,k+1)},\quad n\geq1.$$
It is immediate that $\|h_n\|_E\leq 1$ and $\|h_n\|_q\geq n.$

Let $C_E$ be the concavity modulus of the space $E$ and let $m\geq 4C_E$ be a natural number. Set
$$h=\sum_{n\geq1}(2C_E)^{-n}h_{m^n}.$$
By Lemma \ref{infinite triangle inequality}, we have
$$\|h\|_E\leq \sum_{n\geq1}C_E^n\|(2C_E)^{-n}h_{m^n}\|_E\leq\sum_{n\geq1}2^{-n}=1.$$
Since $h\in E,$ it follows that $h\in L_0+L_q.$ Since $h=\mu(h)$ is constant on the interval $(0,1),$ it follows that $h\in L_q.$ Thus,
$$\|h\|_q\geq (2C_E)^{-n}\|h_{m^n}\|_q\geq (2C_E)^{-n}\cdot m^n\geq 2^n,\quad n\geq1.$$
It follows that $h\notin L_q.$ This contradiction completes the proof.
\end{proof}

\begin{lem}\label{l0lq bicontraction lemma} Let $E$ be as in Theorem \ref{cor: taill to int}. If $T$ is a contraction on $(L_0,L_q),$ then $T:E\to E.$
\end{lem}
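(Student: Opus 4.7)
The plan is to establish the pointwise majorization $|Tf|^q \taill |f|^q$ for every $f \in E$; once this is in hand, the hypothesis on $E$ immediately gives $Tf \in E$.

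To prove this majorization, I would fix $t > 0$ and construct an adapted decomposition $f = g_t + h_t$ suited to the couple $(L_0,L_q)$. Setting $\lambda = \mu(t,f)$, the right-continuity of the decreasing rearrangement yields $m(\{|f|>\lambda\}) \leq t \leq m(\{|f|\geq\lambda\})$, so one can pick a measurable set $A_t$ with $\{|f|>\lambda\} \subseteq A_t \subseteq \{|f|\geq\lambda\}$ and $m(A_t) = t$. Define $g_t := f\chi_{A_t}$ (which lies in $L_0$, with support of measure $t$) and $h_t := f\chi_{A_t^c}$. A direct level-set computation then shows $\mu(s, h_t) = \mu(s + t, f)$ for all $s > 0$, hence
$$\|h_t\|_q^q = \int_0^\infty \mu(s+t,f)^q\,ds = \int_t^\infty \mu(s,f)^q\,ds.$$

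Now I would apply $T$. Writing $Tf = Tg_t + Th_t$, the contraction assumption gives $m(\supp(Tg_t)) \leq t$ and $\|Th_t\|_q \leq \|h_t\|_q$. Using the standard rearrangement inequality $\mu(a+b, u+v) \leq \mu(a, u) + \mu(b, v)$ with $a=t$, $u=Tg_t$ (so that $\mu(a,u)=0$), $b=r$, $v=Th_t$, one gets $\mu(t+r, Tf) \leq \mu(r, Th_t)$ for all $r > 0$. Raising to the $q$-th power and integrating in $r$ then yields
$$\int_t^\infty \mu(s, Tf)^q\,ds = \int_0^\infty \mu(t+r,Tf)^q\,dr \leq \|Th_t\|_q^q \leq \int_t^\infty \mu(s, f)^q\,ds.$$
Since $t > 0$ is arbitrary, this is exactly $|Tf|^q \taill |f|^q$, and the hypothesis of Theorem \ref{cor: taill to int} delivers $Tf \in E$.

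The only mildly technical point is the identity $\mu(s, h_t) = \mu(s+t, f)$, which requires careful handling of possible plateaus of $\mu(f)$ at the level $\lambda$; the sandwich $\{|f|>\lambda\} \subseteq A_t \subseteq \{|f|\geq\lambda\}$ is tailored precisely to neutralise this difficulty. No deeper obstacle is expected, as every step reduces to elementary manipulations of decreasing rearrangements combined with the natural $K$-functional decomposition for the couple $(L_0,L_q)$.
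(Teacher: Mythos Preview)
Your proof is correct and follows essentially the same route as the paper's: the same decomposition $f=f\chi_{A_t}+f\chi_{A_t^c}$ with $m(A_t)=t$, the same identity $\mu(s,f\chi_{A_t^c})=\mu(s+t,f)$, and the same use of the two contraction bounds to deduce $|Tf|^q\taill |f|^q$. The only cosmetic difference is that the paper packages the final step via the $K$-functional identity $\int_t^{\infty}\mu^q(s,Tf)\,ds=\inf\{\|Tf-h\|_q^q:\ \|h\|_0\leq t\}$, whereas you invoke the rearrangement inequality $\mu(a+b,u+v)\leq\mu(a,u)+\mu(b,v)$ directly; these are equivalent formulations of the same estimate.
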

\begin{proof} Take $f\in E.$ Fix $t>0$ and let $A=A_1\bigcup A_2,$ where
$$A_1=\{|f|>\mu(t,f)\}\mbox{ and }A_2\subset\{|f|=\mu(t,f)\}\mbox{ is such that }m(A_2)=t-m(A_1).$$    
Obviously, $m(A)=t$ and
$$\mu(s,f\chi_{A^c})=\mu(s+t,f),\quad s>0.$$
Now, we write
$$Tf=T(f\chi_A)+T(f\chi_{A^c}).$$
Since $T:L_q\to L_q$ is a contraction, it follows that
$$\|T(f\chi_{A^c})\|_q^q\leq\|f\chi_{A^c}\|_q^q=\int_t^{\infty}\mu^q(s+t,f)ds=\int_t^{\infty}\mu^q(s,f)ds,$$
$$\|T(f\chi_A)\|_0\leq\|f\chi_A\|_0\leq t.$$

Now,
$$\int_t^{\infty}\mu^q(s,Tf)ds=\inf\{\|Tf-h\|_q^q:\ \|h\|_0\leq t\}.$$
Taking $h=T(f\chi_A),$ we write
$$\int_t^{\infty}\mu^q(s,Tf)ds\leq \|Tf-T(f\chi_A)\|_q^q=\|T(f\chi_{A^c})\|_q^q\leq\int_t^{\infty}\mu^q(s,f)ds .$$

Since $t>0$ is arbitrary, it follows that
$$|Tf|^q\prec\prec_{{\rm tl}}|f|^q.$$
By assumption, $Tf\in E.$ Since $f\in E$ is arbitrary, it follows that $T:E\to E.$
\end{proof}

Our next lemma demonstrates the boundedness of the mapping $T:E\to E$ established in Lemma \ref{l0lq bicontraction lemma}.

\begin{lem}\label{closed graph lemma} Let $E$ be as in Theorem \ref{cor: taill to int}. If $T$ is a contraction on $(L_0,L_q),$ then the mapping $T:E\to E$ is bounded.
\end{lem}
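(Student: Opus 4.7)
The natural approach is to apply the closed graph theorem to $T : E \to E$, which is well defined by Lemma \ref{l0lq bicontraction lemma}; since $E$ is a quasi-Banach space and therefore an $F$-space, it suffices to check that the graph is closed. By linearity this reduces to showing: if $f_n \to 0$ in $E$ and $Tf_n \to g$ in $E$, then $g = 0$.

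The first technical stage is to show that $E$-convergence already forces convergence in measure on the whole half-line, i.e.\ $\|f_n\|_E \to 0 \Rightarrow m(\{|f_n| > \e\}) \to 0$ for every $\e > 0$. The ideal property yields $\|\chi_{\{|f_n|>\e\}}\|_E \to 0$; if these measures failed to tend to $0$, one could extract a subsequence $A_n = \{|f_n|>\e\}$ with $m(A_n) \geq \delta > 0$ on which $\|\chi_{A_n}\|_E$ decays fast enough that $h := \sum_n 2^n \chi_{A_n}$ converges in $E$ by Lemma \ref{infinite triangle inequality}, while simultaneously $m(\{|h|>M\}) \geq \delta$ for arbitrarily large $M$, contradicting $h\in E\subset L_0+L_q$ (whose elements satisfy $m(\{|\cdot|>M\})\to 0$). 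A Borel--Cantelli extraction then gives $f_n \to 0$ a.e.\ along a subsequence, and a further fast-decay extraction combined with the same triangle inequality produces a pointwise dominating function $F := \sum_n |f_n| \in E$.

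The second stage is to show $Tf_n \to 0$ in measure, which together with $Tf_n \to g$ in $E$ (hence in measure) forces $g = 0$. Fix $\e > 0$ and split $f_n = u_n + v_n$ with $u_n = f_n\chi_{\{|f_n|>\e\}}$ and $v_n = f_n - u_n$. The $L_0$-contraction property handles $u_n$: $m(\supp(Tu_n)) \leq \|u_n\|_0 = m(\{|f_n| > \e\}) \to 0$. For $v_n$ one has the pointwise bound $|v_n|\leq \min(F,\e)$, and any decomposition $F = U+V$ with $U \in L_0$, $V \in L_q$ shows $\min(F,\e) \in L_q$; since $v_n \to 0$ a.e., the dominated convergence theorem in $L_q$ gives $\|v_n\|_q \to 0$, whence $\|Tv_n\|_q \to 0$ by the $L_q$-contractivity of $T$, and Chebyshev's inequality upgrades this to $Tv_n \to 0$ in measure.

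The main obstacle is the first stage: since $E$ is not assumed symmetric, one cannot appeal to any fundamental function, and the only external information available is the embedding $E \subset L_0+L_q$. The summation trick of Lemma \ref{infinite triangle inequality} is precisely what converts the ideal property together with this embedding into a genuine topological statement (convergence in measure). Once that is in hand, the truncation and dominated convergence argument of the second stage is a fairly standard bi-contraction calculation.
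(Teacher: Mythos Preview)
Your proof is correct, and the overall architecture (closed graph theorem; show $f_n\to 0$ in $E$ forces $Tf_n\to 0$ in measure by splitting $f_n$ into an $L_0$-controlled piece and an $L_q$-controlled piece) matches the paper. The execution, however, is genuinely different.

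The paper first invokes Lemma~\ref{interpolation implies symmetry second lemma} to assume $E$ symmetric, and then works with rearrangements: it uses the levels $\mu(\frac1n,f_n)$ and $\mu(1,f_n)$ to produce a three-fold split $f_n=f_{1n}+f_{2n}+f_{3n}$, where $f_{1n}$ has small support, $f_{2n}$ is uniformly small in $L_\infty$ on a set of measure $\le 1$, and $f_{3n}$ is handled via Lemma~\ref{hvost}. You instead bypass symmetry entirely. Your first stage --- showing that $\|f_n\|_E\to 0$ forces convergence in measure by summing $\sum 2^n\chi_{A_n}$ in $E$ and contradicting a.e.\ finiteness --- replaces both the symmetrization and the use of the fundamental function implicit in the paper's estimate $\mu(\frac1n,f_n)\to 0$. (One point worth making explicit: the $E$-limit $h$ of the increasing partial sums really does dominate each $h_N$, because the ideal property gives $\|(h-h_N)_-\|_E\le\|h_M-h\|_E\to 0$.) Your second stage, building a single dominant $F=\sum|f_n|\in E\subset L_0+L_q$ and using dominated convergence in $L_q$ for the bounded part, replaces the combination of the $f_{2n}$ and $f_{3n}$ estimates and avoids Lemma~\ref{hvost} altogether.

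What your route buys is self-containment: you never leave the ideal property and Lemma~\ref{infinite triangle inequality}, and you do not need the renorming lemma (whose applicability at this point in the argument is in fact a little delicate, since $E$ has not yet been shown to be an interpolation space). What the paper's route buys is a cleaner endgame once symmetry is in hand: no subsequence gymnastics, no dominating function, and the three pieces are dispatched in one line each.
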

\begin{proof} Using Lemma \ref{interpolation implies symmetry second lemma}, we may assume without loss of generality that $E$ is symmetric.

Let us show that the graph of $T:E\to E$ is closed. Assume the contrary. Choose a sequence $(f_n)_{n\geq1}\subset E$ such that $f_n\to f\in E$ and $Tf_n\to g\neq Tf$ in $E.$ Replacing $(f_n)_{n\geq1}$ with $(f_n-f)_{n\geq1}$ if necessary, we may assume without loss of generality that $f=0.$ That is, $f_n\to 0$ in $E$ and $Tf_n\to g\neq0$ in $E.$
       
Passing to a subsequence, if needed, we may assume without loss of generality that
$$\frac{\|f_n\|_E}{\|\chi_{(0,\frac1n)}\|_E}\to0,\quad n\to\infty.$$
Clearly,
$$\|f_n\|_E=\|\mu(f_n)\|_E\geq\|\mu(f_n)\chi_{(0,\frac1n)}\|_E\geq\mu(\frac1n,f_n)\|\chi_{(0,\frac1n)}\|_E.$$
It follows that $\mu(\frac1n,f_n)\to0$ as $n\to\infty.$

Set
$$f_{1n}=f_n\chi_{\{|f_n|>\mu(\frac1n,f_n)\}},\quad n\geq1,$$
$$f_{2n}=f_n\chi_{\{\mu(1,f_n)<|f_n|\leq \mu(\frac1n,f_n)\}},\quad n\geq 1,$$
$$f_{3n}=f_n\chi_{\{|f_n|\leq \mu(1,f_n)\}},\quad n\geq1.$$

Since $T:L_0\to L_0$ is a contraction, it follows that
$$\|T(f_{1n})\|_0\leq\|f_{1n}\|_0\leq\frac1n,\quad n\geq1.$$
In particular, $T(f_{1n})\to0$ in measure.

Since $T:L_q\to L_q$ is a contraction, it follows that
$$\|T(f_{2n})\|_q\leq\|f_{2n}\|_q.$$
Since $f_{2n}$ is supported on a set of measure $1,$ it follows that
$$\|f_{2n}\|_q\leq\|f_{2n}\|_{\infty}=\mu(\frac1n,f_n)\to0,\quad n\to\infty.$$
In particular, $T(f_{2n})\to0$ in measure.

Since $T:L_q\to L_q$ is a contraction, it follows that $$\|T(f_{3n})\|_q\leq\|f_{3n}\|_q\leq\|\min\{\mu(f_n),\mu(1,f_n)\}\|_q.$$
By Lemma \ref{hvost}, we have
$$\|\min\{\mu(f_n),\mu(1,f_n)\}\|_q\leq c_{q,E}\|f_n\|_E\to0,\quad n\to\infty.$$
In particular, $T(f_{3n})\to0$ in measure.

Combining the last $3$ paragraphs, we conclude that $Tf_n\to 0$ in measure. However, $Tf_n\to g$ in $E$ and, therefore, in measure. It follows that $g=0,$ which contradicts our choice of the sequence $(f_n)_{n\geq1}.$ This contradiction shows that the graph of $T:E\to E$ is closed. By the closed graph theorem, $T:E\to E$ is bounded. 
\end{proof}

\begin{proof}[Proof of Theorem \ref{cor: taill to int}] Let $T$ be a bounded operator on $(L_0,L_q).$ Fix $n\in\mathbb{N}$ such that
$$\|T\|_{L_0\to L_0}\leq n,\quad \|T\|_{L_q\to L_q}\leq n.$$

Let $\sigma_{\frac1n}$ be a dilation operator defined by the usual formula
$$(\sigma_{\frac1n}f)(t)=f(nt),\quad n\geq0.$$
Set
$$S=\frac1n \sigma_{\frac1n}\circ T\mbox{ so that }T=n\sigma_n\circ S.$$
Obviously,
$$\|S\|_{L_0\to L_0}\leq 1,\quad \|S\|_{L_q\to L_q}\leq 1.$$
By Lemma \ref{closed graph lemma}, $S:E\to E$ is a bounded mapping. Since $E$ is a symmetric quasi-Banach function space, it follows that $\sigma_n:E\to E$ is bounded; this implies $T:E\to E.$
\end{proof}

Theorem \ref{cor: taill to int} applies in particular to $L_p$-spaces, $p\leq q.$ We decided to add more precise statement and to provide a direct proof of the latter.

\begin{cor}\label{lem : Lp in Int L0 Lq}
Let $p,q\in (0,\infty)$ such that $p<q$. Then, $L_p$ is an interpolation space for the couple $(L_0,L_q).$ More precisely, if $T$ is a contraction on $(L_0,L_q),$ then $T$ is a contraction on $L_p$.
\end{cor}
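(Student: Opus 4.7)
The plan has two pieces. First, by repeating the argument in the proof of Lemma \ref{l0lq bicontraction lemma}, I would establish the tail submajorization
\[
\int_t^\infty\mu(s,Tf)^q\,ds\leq \int_t^\infty\mu(s,f)^q\,ds,\qquad t>0.
\]
Splitting $f=f_1+f_2$ with $\|f_1\|_0=t$ and $\mu(s,f_2)=\mu(s+t,f)$, and using the identity
\[
\int_t^\infty\mu(s,Tf)^q\,ds=\inf\set{\|Tf-h\|_q^q : \|h\|_0\leq t}
\]
with the choice $h=Tf_1$, the bounds $\|Tf_1\|_0\leq t$ and $\|Tf_2\|_q\leq\|f_2\|_q$ yield $\int_t^\infty\mu(s,Tf)^q\,ds\leq\|f_2\|_q^q$, which is exactly the right-hand side above.

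The second and real piece is to promote this tail submajorization to the quantitative bound $\|Tf\|_p\leq\|f\|_p$. Setting $\alpha=\mu(Tf)$ and $\beta=\mu(f)$ and writing $\alpha^{-1}(\lambda)=m(\set{\alpha>\lambda})$, a Fubini computation gives the layer-cake identity
\[
\int_t^\infty\alpha(s)^q\,ds = q\int_0^\infty\lambda^{q-1}(\alpha^{-1}(\lambda)-t)_+\,d\lambda,
\]
and the substitution $u=\lambda^q$ turns the hypothesis into
\[
\int_0^\infty(\tilde A(u)-t)_+\,du\leq\int_0^\infty(\tilde B(u)-t)_+\,du,\qquad t\geq 0,
\]
with $\tilde A(u)=\alpha^{-1}(u^{1/q})$ and $\tilde B(u)=\beta^{-1}(u^{1/q})$, both decreasing in $u$. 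By the Hardy--Littlewood--P\'olya equivalence for decreasing functions this is the head submajorization $\int_0^s\tilde A\,du\leq\int_0^s\tilde B\,du$ for every $s>0$. The same layer-cake computation with $p$ in place of $q$ gives $\|Tf\|_p^p=(p/q)\int_0^\infty u^{p/q-1}\tilde A(u)\,du$ and the analogous formula for $\|f\|_p^p$.

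The conclusion then follows from the classical observation that head submajorization of decreasing functions preserves inequalities when integrated against a decreasing weight: since $p<q$, the weight $u\mapsto u^{p/q-1}$ is decreasing, and integration by parts rewrites
\[
\int_0^\infty u^{p/q-1}\tilde A(u)\,du=(1-p/q)\int_0^\infty u^{p/q-2}\Big(\int_0^u\tilde A(v)\,dv\Big)\,du,
\]
on which the head submajorization applies directly since $u^{p/q-2}\geq 0$. The main technical obstacle I anticipate is justifying the vanishing of the boundary terms in this integration by parts when $\tilde A$ is unbounded near $0$ (which happens exactly when $Tf$ fails to have finite-measure support); I would handle this by first proving the inequality for $f\in L_p\cap L_q\cap L_0$ (where $\tilde A$ is bounded and belongs to $L^1$) and then extending to general $f\in L_p$ via the truncations $f_n=f\chi_{\set{1/n\leq|f|\leq n}}$ and Fatou's lemma applied to $\mu(\cdot,Tf_n)$.
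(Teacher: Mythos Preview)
Your argument is correct and takes a genuinely different route from the paper's. The paper first checks the contraction property on characteristic functions via H\"older (using $\|T\chi_E\|_0\leq m(E)$ to control the support), extends to step functions by the $p$-triangle inequality when $p\leq 1$ (invoking Theorem~\ref{cor: taill to int} only to justify passing to the $L_p$-closure), and then obtains the case $p>1$ by complex interpolation between $L_1$ and $L_q$. Your approach is instead to deduce the sharp inequality $\|Tf\|_p\leq\|f\|_p$ directly from the tail submajorization $|Tf|^q\taill|f|^q$ by a rearrangement argument: the layer-cake identity converts tail-$q$-submajorization of $\mu(Tf)$ by $\mu(f)$ into head submajorization of the distribution-type functions $\tilde A,\tilde B$, and the Hardy--Littlewood--P\'olya equivalence together with integration by parts against the decreasing weight $u^{p/q-1}$ finishes. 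This is uniform in $p\in(0,q)$, avoids both the case split at $p=1$ and the appeal to complex interpolation, and does not rely on the prior boundedness result Theorem~\ref{cor: taill to int}; the price is that it uses the (standard but nontrivial) Hardy--Littlewood--P\'olya characterisation and requires the boundary/density justification you flag.

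On that last point: your truncation step is fine, but you should spell out why $Tf_n\to Tf$ in measure. Split $f-f_n=f\chi_{\{|f|>n\}}+f\chi_{\{|f|<1/n\}}$; the first piece lies in $L_0$ with $\|\cdot\|_0\to0$ and the second in $L_q$ with $\|\cdot\|_q\to0$ (since $|f|^q\leq n^{-(q-p)}|f|^p$ there), so $T(f-f_n)$ is a sum of a term going to $0$ in $L_0$ and one going to $0$ in $L_q$, hence $Tf_n\to Tf$ in measure. Passing to an a.e.\ convergent subsequence, Fatou gives $\|Tf\|_p\leq\liminf\|Tf_{n_k}\|_p\leq\|f\|_p$.
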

\begin{proof}
Let us first consider characteristic functions. Let $E$ be a set with finite measure. Since $T$ is a contraction on $L_0$, the measure of the support of $T(\chi_E)$ is less than $m(E)$. So by H\"older's inequality, setting $r = (p^{-1} - q^{-1})^{-1}$, we have:
$$\|T(\chi_{E})\|_p\leq\|T(\chi_{E})\|_q\cdot m(E)^{1/r}\leq\|\chi_E\|_q\cdot m(E)^{\frac1r}=\|\chi_E\|_p.$$

First, consider the case $p\leq 1.$ Let $f\in L_p$ be a step function, i.e.
$$f = \sum_{i\in\Nb} a_i\chi_{E_i},$$
where $a_i\in \Cb$ and the sets $E_i$ are disjoint sets with finite measure. By the $p$-triangular inequality:
$$\|Tf\|_p^p\leq\sum_{i\in\Nb}|a_i|^p\|T(\chi_{E_i})\|_p^p \leq \sum_{i\in\Nb}|a_i|^p\|\chi_{E_i}\|_p^p = \|f\|_p^p.$$
Since $T:L_p\to L_p$ is bounded by Theorem \ref{cor: taill to int} and since step functions are dense in $L_p,$ it follows that $T:L_p\to L_p$ is a contraction (for $p\leq 1$).

Now consider the case $p>1.$ Since $p<q,$ it follows that $q>1.$ By the preceding paragraph, $T:L_1\to L_1$ is a contraction. By complex interpolation, $T:L_p\to L_p$ is also contraction.
\end{proof}

\section{Construction of contractions on ($L_0$,$L_q$) and ($L_p$,$L_\infty$)} \label{bi-contraction}

Let $p,q\in (0,\infty)$. In this section, we are interested in the following question. Given functions $f$ and $g$ in $L_0 + L_q$ (resp. $L_p+L_\infty$), does there exist a bi-contraction $T$ on ($L_0$,$L_q$) (resp. ($L_p$,$L_\infty$)) such that $T(f) = g$? We show that such an operator exists provided that $\md{g}^q \taill \md{f}^q$ (resp. $\md{g}^p \headd \md{f}^p.$ This directly implies a necessary condition for a symmetric space to be an interpolation space for the couple $(L_0,L_q)$ (resp. $(L_p,L_\infty)$) which will be exploited in the next section.

Our method of proof is very direct. We construct the bi-contraction $T$ as direct sums of very simple operators. This is made possible by three partition lemmas that enable us to understand the orders $\taill$ and $\headd$ as direct sums of simple situations.

\subsection{Partition lemmas}

We state our first lemma without proof since it essentially repeats that of Proposition 19 in \cite{SZ09}.

\begin{lem}\label{first partition lemma} Let $f,g\in L_1$ be positive decreasing step functions. Assume that $g\head f.$ There exists a partition $\{I_k,J_k\}_{k\geq0}$ of $(0,\infty)$ such that
\begin{enumerate}[{\rm (i)}]
\item\label{191} for every $k\geq0,$ $I_k$ and $J_k$ are disjoint intervals of finite length;
\item\label{192} $(I_k\cup J_k)\cap(I_l\cup J_l)=\varnothing$ for $k\neq l;$
\item\label{193} $f$ and $g$ are constant on $I_k$ and on $J_k;$
\item\label{194} $g|_{I_k\cup J_k}\head f|_{I_k\cup J_k}$ for every $k\geq0;$
\end{enumerate}
\end{lem}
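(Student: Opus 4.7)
My plan is to construct the partition via a ballot-style FIFO matching applied to a common refinement of $f$ and $g$, pairing ``surplus'' sub-intervals (where $f>g$) with later ``deficit'' sub-intervals (where $f<g$) so that each pair has balanced $L_1$-masses.

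First, let $\{c_k\}_{k\geq 0}$ list the union of the jump points of $f$ and $g$ in increasing order, supplemented if needed by auxiliary points so every piece $P_k:=(c_{k-1},c_k)$ has finite length (this matters only if $f=g=0$ on a terminal half-line). On $P_k$ both $f$ and $g$ are constant with values $\alpha_k,\beta_k$; set $\Delta_k:=(c_k-c_{k-1})(\alpha_k-\beta_k)$. The hypothesis $g \head f$ translates to $S_k:=\sum_{j\leq k}\Delta_j\geq 0$ for every $k$, with $\lim_k S_k = 0$.

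Second, scan the $P_k$ from left to right, maintaining a FIFO queue of pending surplus fragments (sub-intervals of pieces with $\Delta_k>0$). When a deficit piece ($\Delta_k<0$) is encountered, repeatedly pair it against the head of the queue, splitting whichever fragment has the larger outstanding mass so that the matched pair has exactly balanced excess and deficit. Splitting simply cuts an interval in two while preserving the constant values of $f,g$ on each half. Pieces with $\Delta_k=0$ (where $f=g$) are trivially self-majorized and can be taken as $I_k$ with $J_k$ empty.

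Third, I would verify that each matched pair $(I_k,J_k)$---with $I_k$ to the left of $J_k$, carrying $(f,g)=(\alpha_I,\beta_I)$ on $I_k$ and $(\alpha_J,\beta_J)$ on $J_k$---satisfies $g|_{I_k\cup J_k} \head f|_{I_k\cup J_k}$. Monotonicity of $f,g$ combined with the surplus/deficit conditions gives $\alpha_I>\beta_I\geq\beta_J>\alpha_J$, so
$$\mu(f|_{I_k\cup J_k}) = \alpha_I\chi_{(0,|I_k|)} + \alpha_J\chi_{(|I_k|,|I_k|+|J_k|)},$$
$$\mu(g|_{I_k\cup J_k}) = \beta_I\chi_{(0,|I_k|)} + \beta_J\chi_{(|I_k|,|I_k|+|J_k|)}.$$
The difference of their cumulative integrals is piecewise linear on $[0,|I_k|+|J_k|]$, starts at $0$, rises to $|I_k|(\alpha_I-\beta_I)>0$ at $|I_k|$, and returns to $0$ at $|I_k|+|J_k|$ by the matching balance $|I_k|(\alpha_I-\beta_I)=|J_k|(\beta_J-\alpha_J)$; thus it is nonnegative throughout.

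The main obstacle is to ensure the algorithm exhausts $(0,\infty)$ up to a null set, i.e.\ no surplus fragment of positive measure persists forever in the queue. The key bookkeeping observation is that the total mass currently in the queue at step $k$ equals $S_k \to 0$. By FIFO, the oldest fragment is always processed first: either fully matched and removed, or partially matched with a strictly smaller residual. In the latter case the successive residual sub-intervals form a nested decreasing sequence whose length tends to $0$ (proportionally to the mass), so every point of every surplus piece lies in some matched pair up to a null set; a harmless adjustment of endpoints then produces a genuine partition of $(0,\infty)$ with the desired properties.
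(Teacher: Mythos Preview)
The paper does not prove this lemma; it states the result without proof, remarking that the argument ``essentially repeats that of Proposition~19 in \cite{SZ09}'' (the Sukochev--Zanin revision of the Braverman--Mekler construction). Your FIFO-matching argument is a correct, self-contained proof and is very much in the spirit of that greedy construction: the inductive pairing of surplus against deficit pieces on a common refinement, with splitting to balance masses, is exactly the mechanism underlying Proposition~19 of \cite{SZ09}.

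Your verification of (iv) on each matched pair is clean; the chain $\alpha_I>\beta_I\geq\beta_J>\alpha_J$ and the piecewise-linear argument for the cumulative integrals are correct. The exhaustion paragraph is slightly compressed but sound: the precise invariant is that the total queue mass after step $k$ equals $S_k$, and the mass of fragments from pieces with index $\leq k_0$ still in the queue is bounded by $S_k$ (trivially, being part of the total), hence tends to $0$; FIFO then forces the residual of any fixed surplus piece to shrink to a point. One minor remark: when a single deficit piece $P_k$ has $|\Delta_k|$ larger than the current head of the queue, it may be split into several $J$-intervals paired with distinct surplus fragments; this is implicit in ``repeatedly pair'' but worth making explicit, and one should note that $|\Delta_k|\leq S_{k-1}$ guarantees the deficit is always fully absorbed by the (finite) queue.
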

\begin{rem}
{\rm
Note that in \cite{SZ09}, Proposition 19 is proved for couples of functions $f,g$ of the form
$$
f = \sum_{i\in\Zb} f_i\Ind_{[a_i,a_{i+1})},\ g = \sum_{i\in\Zb} g_i\Ind_{[a_i,a_{i+1}]},
$$
with $(a_i)$ an increasing sequence in $(0,\infty)$ and $f_i,g_i \in \Cb$ for any $i\in \Nb$. However, the proof applies with very little modification to couples of decreasing positive step functions as in Lemma \ref{first partition lemma} above.
}
\end{rem}

\begin{fact}\label{majorization fact} Let $f,g\in\mathcal{X}$ be positive decreasing functions. Let $\Delta\subset(0,\infty)$ be an arbitrary measurable set.
\begin{enumerate}[{\rm (i)}]
\item if $f,g\in L_1+L_{\infty}$ are such that
$$\int_{[0,t]\cap\Delta}g\leq \int_{[0,t]\cap\Delta}f,\quad t>0,$$
then $g\chi_{\Delta}\headd f\chi_{\Delta}.$
\item if $f,g\in L_0+L_1$ are such that
$$\int_{(t,\infty)\cap\Delta}g\leq \int_{(t,\infty)\cap\Delta}f,\quad t>0,$$
then $g\chi_{\Delta}\taill f\chi_{\Delta}.$
\end{enumerate}
\end{fact}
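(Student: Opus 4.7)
The plan is to reduce both assertions to a single explicit computation of $\mu(h\chi_\Delta)$ for positive decreasing $h$ on $(0,\infty)$, namely to split $\Delta$ at a point that depends only on $\Delta$ and $t$. Introduce
$$\tau_\Delta(t) := \inf\set{u \geq 0 : m(\Delta \cap [0,u)) \geq t}.$$
Since Lebesgue measure is non-atomic, $u \mapsto m(\Delta\cap[0,u))$ is continuous and $m(\Delta \cap [0, \tau_\Delta(t))) = \min(t, m(\Delta))$ (with $\tau_\Delta(t) = +\infty$ when $t > m(\Delta)$). Crucially, $\tau_\Delta(t)$ depends only on $\Delta$ and $t$, not on $f$ or $g$.

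The key step is to establish, for every positive decreasing $h$ on $(0,\infty)$, the identities
$$\int_0^t \mu(s, h\chi_\Delta)\,ds = \int_{\Delta \cap [0, \tau_\Delta(t))} h, \qquad \int_t^\infty \mu(s, h\chi_\Delta)\,ds = \int_{\Delta \cap [\tau_\Delta(t), \infty)} h.$$
I would prove them by writing $h\chi_\Delta = h_1 + h_2$ with $h_1 := h\chi_{\Delta \cap [0, \tau_\Delta(t))}$ (whose support has measure $\min(t, m(\Delta))$) and $h_2 := h\chi_{\Delta \cap [\tau_\Delta(t), \infty)}$. Monotonicity of $h$ forces every value attained by $h_1$ on its support to weakly dominate every value attained by $h_2$; consequently $\mu(h\chi_\Delta)$ concatenates $\mu(h_1)$ on $(0,t)$ with the translate $s \mapsto \mu(s-t, h_2)$ on $(t,\infty)$. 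Integrating produces both formulas, with no finiteness assumption required.

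Given the identities, (i) and (ii) are almost immediate. For (i), applying the first identity to both $f$ and $g$ (permissible because $\tau_\Delta(t)$ does not depend on the function) and invoking the hypothesis with $\tau_\Delta(t)$ in place of $t$ yields
$$\int_0^t \mu(s, g\chi_\Delta)\,ds = \int_{\Delta \cap [0, \tau_\Delta(t))} g \leq \int_{\Delta \cap [0, \tau_\Delta(t))} f = \int_0^t \mu(s, f\chi_\Delta)\,ds,$$
which is exactly $g\chi_\Delta \headd f\chi_\Delta$; part (ii) follows identically from the second identity and the tail hypothesis (noting that $[\tau_\Delta(t),\infty)\cap\Delta$ and $(\tau_\Delta(t),\infty)\cap\Delta$ differ by at most a singleton and so carry the same integral). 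The main technical obstacle is the proof of the key identities themselves, which requires care when $h$ is flat on an interval straddling $\tau_\Delta(t)$ (so the values of $h_1$ and $h_2$ meet at a common level) and in the degenerate case $t > m(\Delta)$; in both situations the argument still goes through, either because the shared level set contributes measure zero to the vertical ordering, or because the splitting degenerates and the inequality for (i) is recovered by letting $s \to \infty$ in the hypothesis.
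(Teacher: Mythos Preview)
The paper states this Scholium without proof. Your argument is correct: the function $\tau_\Delta$ is the right device, and the concatenation claim for $\mu(h\chi_\Delta)$ holds because, $h$ being decreasing, the essential infimum of $h$ on $\Delta\cap[0,\tau_\Delta(t))$ dominates its essential supremum on $\Delta\cap[\tau_\Delta(t),\infty)$, which is precisely what makes the distribution functions add in the required way. A slightly cleaner route to your first identity, bypassing the flat-part case analysis you flag, is the Hardy--Littlewood formula $\int_0^t\mu(s,\phi)\,ds=\sup\big\{\int_A|\phi|:m(A)\leq t\big\}$: for $\phi=h\chi_\Delta$ one may restrict to $A\subset\Delta$, and monotonicity of $h$ forces the supremum to be attained at the leftmost admissible set $A=\Delta\cap[0,\tau_\Delta(t))$. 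The tail identity then follows either by subtraction (when the total integral is finite) or by the dual characterisation $\int_t^\infty\mu(s,\phi)\,ds=\inf\big\{\int_{A^c}|\phi|:m(A)\leq t\big\}$.
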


Our second partition lemma shows that the order $\headd$ can be reduced for our purpose to a direct sum of $\head$ and $\leq$.

\begin{lem}\label{second partition lemma} Let $f,g\in L_1+L_{\infty}$ be such that $f = \mu(f)$, $g=\mu(g)$ and $g\headd f.$ There exists a collection $\{\Delta_k\}_{k\geq0}$ of pairwise disjoint sets such that
\begin{enumerate}[{\rm (i)}]
\item $g|_{\Delta_k}\head f|_{\Delta_k}$ for every $k\geq0;$
\item $g\leq f$ on the complement of $\bigcup_{k\geq0}\Delta_k;$
\end{enumerate}
\end{lem}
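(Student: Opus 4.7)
My plan is to read off the decomposition from the auxiliary function $\phi(t):=F(t)-G(t)$, where $F(t):=\int_0^t f$ and $G(t):=\int_0^t g$. By hypothesis $\phi$ is absolutely continuous with $\phi(0)=0$, and $g\headd f$ is precisely $\phi\geq 0$.

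The key device I would introduce is the greatest nondecreasing minorant
\[
\psi(t):=\inf_{s\geq t}\phi(s),\qquad t\in[0,\infty).
\]
Then $\psi$ is nondecreasing with $\psi(0)=0$, and continuity of $\phi$ transfers to $\psi$ (the only delicate point is left-continuity, handled by distinguishing whether witnesses of the infima $\psi(t_n)$ along $t_n\uparrow t$ stay bounded or escape to infinity). Set $E:=\{t:\phi(t)=\psi(t)\}$, a closed subset of $[0,\infty)$ containing $0$, and let $\{\Delta_k\}_{k\geq0}$ be the connected components of $[0,\infty)\setminus E$.

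Condition (ii) is immediate: if $g(t_0)>f(t_0)$ at some $t_0\in E$, right-continuity of $f,g$ yields $g>f$ on some $[t_0,t_0+\delta)$, whence $\phi(t_0+\delta')<\phi(t_0)=\psi(t_0)\leq\psi(t_0+\delta')\leq\phi(t_0+\delta')$, a contradiction. For condition (i) on a bounded component $(a_k,b_k)$, I would first show $\psi$ is constant on $[a_k,b_k]$ via an intermediate value argument: were $\psi(a_k)<\psi(b_k)$, one could pick $\rho\in(\psi(a_k),\psi(b_k))$ and $t^*\in(a_k,b_k)$ with $\psi(t^*)=\rho$, and the infimum defining $\psi(t^*)$ would be realised (along a bounded sequence of witnesses, since $s_n\to\infty$ would force $\phi(s_n)\geq\psi(b_k)>\rho$, contradicting $\phi(s_n)\to\rho$) at some $s^*\in E$ with $\phi(s^*)=\rho$. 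Yet $s^*>a_k$ has no admissible position: it cannot lie in $(a_k,b_k)\subset E^c$, nor at $b_k$ (where $\phi(b_k)=\psi(b_k)>\rho$), nor beyond $b_k$ (where $\psi\geq\psi(b_k)>\rho$ forces $\phi>\rho$). Thus $\psi(a_k)=\psi(b_k)$ and hence $\phi(a_k)=\phi(b_k)$; this yields $\int_{a_k}^{b_k}f=\int_{a_k}^{b_k}g$, while the inequality $\phi\geq\phi(a_k)$ on $[a_k,b_k]$ is exactly $g|_{\Delta_k}\headd f|_{\Delta_k}$ (after identifying $f|_{(a_k,b_k)},g|_{(a_k,b_k)}$ with their own decreasing rearrangements on $(0,b_k-a_k)$). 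Together these give $g|_{\Delta_k}\head f|_{\Delta_k}$.

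The main obstacle I anticipate is the (at most one) unbounded component $(a_\infty,\infty)$ of $E^c$: the same monotonicity/IVT argument forces $\psi\equiv\phi(a_\infty)$ on $(a_\infty,\infty)$, hence $\liminf_{s\to\infty}\phi(s)=\phi(a_\infty)$. If $f|_{(a_\infty,\infty)}$ (equivalently $g|_{(a_\infty,\infty)}$) lies in $L_1$, monotone convergence of $\int g$ upgrades the $\liminf$ to $\lim\phi(s)=\phi(a_\infty)$, yielding $\int_{a_\infty}^\infty f=\int_{a_\infty}^\infty g$ and allowing $\Delta_\infty=(a_\infty,\infty)$. The harder case is $f,g\notin L_1((a_\infty,\infty))$: then $(a_\infty,\infty)$ must be partitioned further into bounded subintervals $(c_n,c_{n+1})$ chosen so that $\phi(c_n)=\phi(c_{n+1})$ along some $c_n\uparrow\infty$, a step requiring care since $\phi$ may approach $\phi(a_\infty)$ only in the $\liminf$ sense.
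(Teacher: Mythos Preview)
Your running-minimum/excursion approach is genuinely different from the paper's. The paper does not look at the excursions of $\phi=\int_0^{\cdot}(f-g)$ at all; instead it enumerates the connected components $(a_k,b_k)$ of $\{g>f\}$ and, for each, attaches a compensating piece $(H(a_k),H(b_k))\cap\{f\ge g\}$ chosen via the ``level function'' $H(t)=\inf\{u:\int_0^u(f-g)_+=\int_0^t(f-g)_-\}$ so that $\int_{\Delta_k}(f-g)=0$ by construction. Their $\Delta_k$ are therefore not intervals, whereas yours are; your argument for bounded excursions (constancy of $\psi$ on $[a_k,b_k]$ via the IVT, hence $\phi(a_k)=\phi(b_k)$) is clean and correct.

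There is, however, a real gap in the unbounded-excursion case that your sketch does not close. On $(a_\infty,\infty)$ you correctly deduce $\psi\equiv\phi(a_\infty)$ and $\liminf_{t\to\infty}\phi(t)=\phi(a_\infty)$, but it can happen that $\limsup_{t\to\infty}\phi(t)>\phi(a_\infty)$ (take, for instance, $g(t)=C/t$ and $f(t)=C/t+\phi'(t)$ on $(1,\infty)$ with $\phi(t)=1+\sin(\ln t)+1/t$ and $C$ large; both $f,g$ are positive decreasing, $f-g\to 0$, $\phi>0$ with $\liminf\phi=0$ never attained, and $f,g\notin L_1$). Your proposed remedy of choosing $c_n\uparrow\infty$ with $\phi(c_n)=\phi(c_{n+1})$ then cannot be executed: starting from $c_0=a_\infty$ forces $\phi(c_1)=\phi(a_\infty)$, which is impossible since $\phi>\phi(a_\infty)$ strictly on $(a_\infty,\infty)$; starting from any $c_0>a_\infty$ leaves the interval $(a_\infty,c_0)$ uncovered, and you cannot place it in the complement because $\{g>f\}$ must intersect $(a_\infty,\infty)$ (otherwise $\phi$ would be nondecreasing there and the $\liminf$ could not return to $\phi(a_\infty)$). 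The paper's construction sidesteps this entirely: when $\phi$ oscillates, $\{g>f\}$ breaks into bounded components $(a_k,b_k)$, each getting its own bounded $\Delta_k$, so no unbounded piece ever needs to be balanced.
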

\begin{proof} Consider the set $\{g>f\}.$ Since $g-f$ is right-continuous, it follows that, for every $t\in\{g>f\},$ there exists an $\epsilon>0$ (which depends on $t$) such that $[t,t+\epsilon)\subset\{g>f\}.$ Hence, connected components of the set $\{g>f\}$ are intervals (closed or not) not reduced to points. Let us enumerate these intervals as $(a_k,b_k),$ $k\geq0$ (it does not really matter for us if boundary points of these intervals belong to $\{g>f\}$).

We have
$$\int_0^t(f-g)_+-\int_0^t(f-g)_-=\int_0^t(f-g)\geq 0.$$
Let
$$H(t)=\inf\Big\{u:\ \int_0^u(f-g)_+=\int_0^t(f-g)_-\Big\}.$$
Obviously, $H$ is a monotone function, $H(t)\leq t$ for all $t>0$ and
$$\int_0^{H(t)}(f-g)_+=\int_0^t(f-g)_-.$$

Set
$$\Delta_k=(a_k,b_k)\cup\Big((H(a_k),H(b_k))\cap\{f\geq g\}\Big).$$

Note that
$$\int_0^{a_k}(f-g)_+=\int_0^{b_k}(f-g)_+\geq\int_0^{b_k}(f-g)_-$$
and, therefore, $H(b_k)\leq a_k.$

We claim that $\Delta_k\cap\Delta_l=\varnothing$ for $k\neq l.$ Indeed, let $a_k<b_k\leq a_l<b_l.$ We have $H(a_k)\leq H(b_k)\leq H(a_l)\leq H(b_l).$ Thus, $(H(a_k),H(b_k))\cap (H(a_l),H(b_l))=\varnothing.$ We now have
$$\Delta_k\cap\Delta_l=\Big((\Delta_k\cap\{f<g\})\cap(\Delta_l\cap\{f<g\})\Big)\bigcup\Big((\Delta_k\cap\{f\geq g\})\cap(\Delta_l\cap\{f\geq g\})\Big).$$
Obviously,
$$(\Delta_k\cap\{f<g\})\cap(\Delta_l\cap\{f<g\})=(a_k,b_k)\cap(a_l,b_l)=\varnothing,$$
$$(\Delta_k\cap\{f\geq g\})\cap(\Delta_l\cap\{f\geq g\})=(H(a_k),H(b_k))\cap (H(a_l),H(b_l))\cap\{f\geq g\}=\varnothing.$$
This proves the claim.

We now claim that
$$\int_{[0,t]\cap\Delta_k}(f-g)\geq0.$$
If $t\leq a_k,$ then, taking into account that $H(b_k)\leq a_k,$ we infer that $[0,t]\cap\Delta_k\subset\{f\geq g\}$ and the claim follows immediately. If $t\in(a_k,b_k),$ then
$$\int_{[0,t]\cap\Delta_k}(f-g)=\int_{(H(a_k),H(b_k))}(f-g)_+-\int_{a_k}^t(f-g)_-\geq$$
$$\geq \int_{(H(a_k),H(b_k))}(f-g)_+-\int_{a_k}^{b_k}(f-g)_-=0.$$
This proves the claim.

It follows from the claim and Scholium \ref{majorization fact} that $g\chi_{\Delta_k}\headd f\chi_{\Delta_k}.$ Since
$$\int_{\Delta_k}g=\int_{\Delta_k}f,$$
the first assertion follows.

By construction, $(a_k,b_k)\subset\Delta_k.$ Thus,
$$\{g>f\}=\bigcup_{k\geq0}(a_k,b_k)\subset\bigcup_{k\geq0}\Delta_k.$$
The second assertion is now obvious.
\end{proof}

Finally, the third partition lemma deals with describing the order $\taill$ in terms of $\tail$ and $\leq$.

\begin{lem}\label{third partition lemma} Let $f,g\in L_0+L_1$ be such that $f = \mu(f)$, $g = \mu(g)$ and $g \taill f.$ There exists a collection $\{\Delta_k\}_{k\geq0}$ of pairwise disjoint sets such that
\begin{enumerate}[{\rm (i)}]
\item $f|_{\Delta_k}\head g|_{\Delta_k}$ for every $k\geq0;$
\item $g\leq f$ on the complement of $\bigcup_{k\geq0}\Delta_k;$
\end{enumerate}
\end{lem}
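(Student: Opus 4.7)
The plan is to mirror the proof of Lemma \ref{second partition lemma}, interchanging the roles of ``left'' and ``right'': I pair each connected component of $\{g>f\}$ with a compensating region lying to its \emph{right} (rather than to its left, as in the head case). Right-continuity of $g-f$ ensures, as in the previous lemma, that the connected components of $\{g>f\}$ are non-degenerate intervals, which I enumerate as $(a_k,b_k)_{k\geq 0}$.

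From the hypothesis $g\taill f$ one has $\int_t^\infty(f-g)_+\geq \int_t^\infty(f-g)_-$ for every $t>0$, so I define
$$H(t)=\sup\Big\{u\geq 0:\ \int_u^\infty(f-g)_+\geq \int_t^\infty(f-g)_-\Big\}.$$
Continuity in $u$ of the left side then gives $\int_{H(t)}^\infty(f-g)_+=\int_t^\infty(f-g)_-$, and $H$ is monotone nondecreasing with $H(t)\geq t$. A short computation symmetric to the bound $H(b_k)\leq a_k$ in the previous lemma yields $H(a_k)\geq b_k$. The pairing sets are then
$$\Delta_k=(a_k,b_k)\cup \big((H(a_k),H(b_k))\cap \{f\geq g\}\big).$$

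Disjointness of the $\Delta_k$'s follows as in Lemma \ref{second partition lemma}: the ``left'' pieces are disjoint by construction; the ``right'' pieces are disjoint by monotonicity of $H$; and a left piece cannot meet a right piece since the first lies in $\{g>f\}$ and the second in $\{f\geq g\}$. Property (ii) is then immediate because $\{g>f\}=\bigcup_k(a_k,b_k)\subset\bigcup_k \Delta_k$. For (i), I would verify $\int_{[0,t]\cap\Delta_k}(g-f)\geq 0$ for every $t>0$ and apply Scholium \ref{majorization fact}(i), with the roles of $f$ and $g$ swapped, to conclude $f\chi_{\Delta_k}\headd g\chi_{\Delta_k}$; combined with the equality $\int_{\Delta_k}f=\int_{\Delta_k}g$, which is a direct consequence of the defining identity of $H$ applied at both $a_k$ and $b_k$, this upgrades to $f|_{\Delta_k}\head g|_{\Delta_k}$.

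The main technical step is the verification of the pointwise inequality $\int_{[0,t]\cap\Delta_k}(g-f)\geq 0$. For $t\in(a_k,H(a_k)]$ the accumulated integral reduces to $\int_{a_k}^{\min(t,b_k)}(g-f)\geq 0$, which is clear since $g>f$ on $(a_k,b_k)$. For $t\in(H(a_k),H(b_k))$, a rearrangement shows that the required inequality is equivalent to $\int_t^\infty(f-g)_+\geq \int_{b_k}^\infty(f-g)_-$, which is precisely what the defining supremum for $H(b_k)$ guarantees whenever $t<H(b_k)$. This case analysis is the symmetric counterpart of the corresponding step in Lemma \ref{second partition lemma}, and the only real difficulty lies in careful bookkeeping of signs and tail integrals rather than in any new idea.
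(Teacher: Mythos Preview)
Your proposal is correct and follows essentially the same construction as the paper: both define $H$ via tails of $(f-g)_+$ and set $\Delta_k=(a_k,b_k)\cup\big((H(a_k),H(b_k))\cap\{f\geq g\}\big)$. The only cosmetic difference is that the paper verifies $\int_{(t,\infty)\cap\Delta_k}(f-g)\geq 0$ and invokes Scholium~\ref{majorization fact}(ii), whereas you verify the equivalent inequality $\int_{[0,t]\cap\Delta_k}(g-f)\geq 0$ and invoke Scholium~\ref{majorization fact}(i); given the equality $\int_{\Delta_k}f=\int_{\Delta_k}g$, these are the same statement.
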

\begin{proof} Consider the set $\{g>f\}.$ Similarly to the previous proof, connected components of the set $\{g>f\}$ are intervals (closed or not) not reduced to points. Let us enumerate these intervals as $(a_k,b_k),$ $k\geq0$.

We have
$$\int_t^{\infty}(f-g)_+-\int_t^{\infty}(f-g)_-=\int_t^{\infty}(f-g)\geq0.$$
Let
$$H(t)=\sup\Big\{u:\ \int_u^{\infty}(f-g)_+=\int_t^{\infty}(f-g)_-\Big\}.$$
Obviously, $H$ is a monotone function, $H(t)\geq t$ for all $t>0$ and
$$\int_{H(t)}^{\infty}(f-g)_+=\int_t^{\infty}(f-g)_-.$$

Set
$$\Delta_k=(a_k,b_k)\cup\Big((H(a_k),H(b_k))\cap\{g\leq f\}\Big).$$

Note that
$$\int_{b_k}^{\infty}(f-g)_+=\int_{a_k}^{\infty}(f-g)_+\geq\int_{a_k}^{\infty}(f-g)_-$$
and, therefore, $H(a_k)\geq b_k.$

We claim that $\Delta_k\cap\Delta_l=\varnothing$ for $k\neq l.$ Indeed, let $a_k<b_k\leq a_l<b_l.$ We have $H(a_k)\leq H(b_k)\leq H(a_l)\leq H(b_l).$ Thus, $(H(a_k),H(b_k))\cap (H(a_l),H(b_l))=\varnothing.$ We now have
$$\Delta_k\cap\Delta_l=\Big((\Delta_k\cap\{f<g\})\cap(\Delta_l\cap\{f<g\})\Big)\bigcup\Big((\Delta_k\cap\{f\geq g\})\cap(\Delta_l\cap\{f\geq g\})\Big).$$
Obviously,
$$(\Delta_k\cap\{f<g\})\cap(\Delta_l\cap\{f<g\})=(a_k,b_k)\cap(a_l,b_l)=\varnothing,$$
$$(\Delta_k\cap\{f\geq g\})\cap(\Delta_l\cap\{f\geq g\})=(H(a_k),H(b_k))\cap (H(a_l),H(b_l))\cap\{f\geq g\}=\varnothing.$$
This proves the claim.

We now claim that
$$\int_{(t,\infty)\cap\Delta_k}(f-g)\geq0.$$
If $t\geq b_k,$ then, taking into account that $H(a_k)\geq b_k,$ we infer that $(t,\infty)\cap\Delta_k\subset\{f\geq g\}$ and the claim follows immediately. If $t\in(a_k,b_k),$ then
$$\int_{(t,\infty)\cap\Delta_k}(f-g)=\int_{(H(a_k),H(b_k))}(f-g)_+-\int_t^{b_k}(f-g)_-\geq$$
$$\geq \int_{(H(a_k),H(b_k))}(f-g)_+-\int_{a_k}^{b_k}(f-g)_-=0.$$
This proves the claim.

It follows from the claim and Scholium \ref{majorization fact} that $g\chi_{\Delta_k}\taill f\chi_{\Delta_k}.$ Since
$$\int_{\Delta_k}g=\int_{\Delta_k}f,$$
it follows that $g\chi_{\Delta_k}\tail f\chi_{\Delta_k},$ which immediately implies the first assertion.

By construction, $(a_k,b_k)\subset\Delta_k.$ Thus,
$$\{g>f\}=\bigcup_{k\geq0}(a_k,b_k)\subset\bigcup_{k\geq0}\Delta_k.$$
The second assertion is now obvious.
\end{proof}

\subsection{Construction of operators}

We repeat the same structure as in the previous subsection, proving four lemmas, each one dealing with a certain order : $\head$, $\tail$, $\headd$ and finally $\taill$.

\begin{lem}\label{first operator lemma} Let $p\in (0,\infty)$. Let $f,g\in L_p(0,\infty)$, assume that $|g|^p\head|f|^p$, $f=\mu(f)$ and $g=\mu(g)$. There exists a linear operator $T:\mathcal{X}(0,\infty)\to\mathcal{X}(0,\infty)$ such that $g=T(f)$ and
$$\|T\|_{L_p\to L_p}\leq 2\cdot 3^{\frac1p},\quad \|T\|_{L_{\infty}\to L_{\infty}}\leq 2\cdot 2^{\frac1p}.$$
\end{lem}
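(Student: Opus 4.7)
The plan is a reduction--construction strategy in three phases: use Lemma \ref{first partition lemma} to split $(0,\infty)$ into simple ``cells'' on which $f$ and $g$ are two-valued, construct an elementary operator on each cell, and glue by direct sum.

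Since $f=\mu(f)\geq 0$, the functions $f^p=|f|^p$ and $g^p=|g|^p$ are positive decreasing. After reducing to step functions by a routine approximation, Lemma \ref{first partition lemma} applied to the pair $(|f|^p,|g|^p)$ yields a countable partition $\{I_k,J_k\}_{k\geq 0}$ of $(0,\infty)$ by pairs of intervals such that $f$ and $g$ are each constant on $I_k$ and on $J_k$ separately, and $|g|^p\chi_{I_k\cup J_k}\head|f|^p\chi_{I_k\cup J_k}$ for every $k$. Writing $f|_{I_k}=a_k$, $f|_{J_k}=b_k$, $g|_{I_k}=c_k$, $g|_{J_k}=d_k$, $s_k=m(I_k)$, $t_k=m(J_k)$, the monotonicity of $f,g$ combined with the head majorization on the cell forces $a_k\geq b_k$, $c_k\geq d_k$, $c_k\leq a_k$, $d_k\geq b_k$, together with the mass balance $a_k^p s_k+b_k^p t_k=c_k^p s_k+d_k^p t_k$. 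Hence on each cell the problem has collapsed to transporting one two-valued function to another.

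The heart of the argument is then to construct, for each cell, a linear operator $T_k$ carrying functions supported on $I_k\cup J_k$ into themselves, sending $f|_{I_k\cup J_k}$ to $g|_{I_k\cup J_k}$, with $\|T_k\|_{L_p\to L_p}\leq 2\cdot 3^{1/p}$ and $\|T_k\|_{L_\infty\to L_\infty}\leq 2\cdot 2^{1/p}$. I would take $T_k=A_k+B_k$, where $A_k$ is a pure multiplication (by $c_k/a_k\leq 1$ on $I_k$ and by a bounded factor on $J_k$) that accounts for the decrease from $a_k$ to $c_k$, and $B_k$ is a transport operator built from a measure-preserving bijection between a suitable subset of $I_k$ and a suitable subset of $J_k$, so that mass originally at level $a_k$ on $I_k$ is partially rerouted to $J_k$ to raise the level $b_k$ up to $d_k$. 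The construction must be split into the sub-cases $s_k\geq t_k$ and $s_k<t_k$, and the scalar coefficients of $B_k$ are chosen via the mass balance $(a_k^p-c_k^p)s_k=(d_k^p-b_k^p)t_k$ so that no factor of the form $d_k/b_k$ (which can be arbitrarily large) enters any single entry of $T_k$. Once each $T_k$ is built, set $Th:=\sum_k T_k(h\chi_{I_k\cup J_k})$; since the cells are pairwise disjoint, $\|Th\|_p^p=\sum_k\|T_k(h\chi_{I_k\cup J_k})\|_p^p$ and $\|Th\|_\infty=\sup_k\|T_k(h\chi_{I_k\cup J_k})\|_\infty$, so $T$ inherits the cell bounds and $Tf=g$ by construction.

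The main obstacle is the two-valued construction of $T_k$ with the precise constants $2\cdot 3^{1/p}$ and $2\cdot 2^{1/p}$. The exponents suggest that at most three values of $h$ contribute to $(T_kh)(x)$ at a given point $x$ (giving the factor $3^{1/p}$ via the $p$-quasi-triangle inequality $\|u+v\|_p^p\leq\|u\|_p^p+\|v\|_p^p$ for $p\leq 1$, or the ordinary triangle inequality for $p\geq 1$), while the $L_\infty$ bound comes from the sum of two multiplicative coefficients of size at most $2^{1/p}$. Pinning down the exact placement and scalars of the bijection in both sub-cases so that these bounds hold uniformly in $k$ is the only delicate step; the reduction to step functions and the direct-sum assembly are then routine.
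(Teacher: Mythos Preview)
Your overall architecture---reduce to step functions, invoke Lemma \ref{first partition lemma}, build a cell operator $T_k$, take a direct sum---is exactly the paper's. The setup on each cell ($a_k\ge b_k$, $c_k\ge d_k$, $c_k\le a_k$, $d_k\ge b_k$, and the mass balance) is correct. But the two places where you leave things vague are precisely where the paper does something specific, and your sketch points in a slightly different direction in both.

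First, the cell operator. The paper does \emph{not} split on $s_k$ versus $t_k$, nor does it use a measure-preserving bijection between subsets. It splits on whether $b_k^p\le\tfrac12 d_k^p$ (i.e., $f^p|_{J_k}\le\tfrac12 g^p|_{J_k}$). If not, then $g\le 2^{1/p}f$ on the whole cell and $S_k=M_{g/f}$ already has both norms $\le 2^{1/p}$. If yes, the mass balance forces $d_k^p t_k\le 2a_k^p s_k$, and the paper takes a \emph{linear} (hence non-measure-preserving) bijection $l_k:J_k\to I_k$ and sets
\[
S_kx=\frac{c_k}{a_k}\,x\chi_{I_k}+\frac{d_k}{a_k}\,(x\circ l_k)\chi_{J_k}.
\]
Note this operator ignores the values of $x$ on $J_k$ entirely: both pieces of $g$ are produced from $f|_{I_k}$. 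The $L_\infty$ bound is $\le 1$ (both coefficients are $\le 1$), and the $L_p$ bound is $\le 3^{1/p}$ because the Jacobian factor $t_k/s_k$ combines with $(d_k/a_k)^p$ to give at most $2$. Your proposed $A_k+B_k$ with a measure-preserving transport would also need to produce $g|_{J_k}=d_k$ from $f|_{J_k}=b_k$, and the ratio $d_k/b_k$ is unbounded---so you would still need to reroute from $I_k$, and once you do that the linear-bijection trick is both simpler and what delivers the exact constants.

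Second, the reduction to step functions is not quite ``routine'': the extra factor of $2$ in the statement comes from there. The paper's Step~2 replaces $f,g$ by conditional expectations $f_0,g_0$ on a dyadic $\sigma$-algebra, with $2^{-1/2}f\le f_0\le 2^{1/2}f$ and similarly for $g$; then $T=M_{g g_0^{-1}}\circ S\circ M_{f_0 f^{-1}}$ picks up a factor $\le 2$ on each norm. Your sketch should account for this explicitly, since an $L_p$-approximation alone would not give a multiplicative sandwich and hence would not preserve the exact constants.
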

\begin{proof}

{\it Step 1:} First, let us assume that $f$ and $g$ are step functions

Apply Lemma \ref{first partition lemma} to the functions $f^p$ and $g^p$ and let $I_k$ and $J_k$ be as in Lemma \ref{first partition lemma}. Without loss of generality, the interval $I_k$ is located to the left of the interval $J_k.$

For every $k\geq0,$ define the mapping $S_k:\mathcal{X}(I_k\cup J_k)\to\mathcal{X}(I_k\cup J_k)$ as below. The construction of this mapping will depend on whether $f^p|_{J_k}\leq \frac12 g^p|_{J_k}$ or $f^p|_{J_k}>\frac12 g^p|_{J_k}.$

If $f^p|_{J_k}\leq \frac12 g^p|_{J_k},$ then
$$g^p|_{J_k}\cdot m(J_k)\leq g^p|_{I_k}\cdot m(I_k)+g^p|_{J_k}\cdot m(J_k)=$$
$$=f^p|_{I_k}\cdot m(I_k)+f^p|_{J_k}\cdot m(J_k)\leq f^p|_{I_k}\cdot m(I_k)+\frac12g^p|_{J_k}\cdot m(J_k).$$
Therefore,
$$g^p|_{J_k}\cdot m(J_k)\leq 2f^p|_{I_k}\cdot m(I_k).$$
Let $l_k$ be a linear bijection from $J_k$ to $I_k.$ We set
$$S_kx=\frac{g|_{I_k}}{f|_{I_k}}\cdot x\chi_{I_k}+\frac{g|_{J_k}}{f|_{I_k}}\cdot (x\circ l_k)\chi_{J_k}.$$
Clearly, $S_k$ is a contraction in the uniform norm.

Let $x\in L_p$. We have
$$\|S_kx\|_p^p\leq \frac{g^p|_{I_k}}{f^p|_{I_k}}\cdot \|x\chi_{I_k}\|_p^p+\frac{g^p|_{J_k}}{f^p|_{I_k}}\cdot\|(x\circ l_k)\chi_{J_k}\|_p^p\leq$$
$$\leq \frac{g^p|_{I_k}}{f^p|_{I_k}}\cdot \|x\|_p^p+\frac{g^p|_{J_k}}{f^p|_{I_k}}\cdot\frac{m(J_k)}{m(I_k)}\cdot\|x\|_p^p\leq 3\|x\|_p^p.$$
Also, we have
$$\|S_kx\|_{\infty}\leq \|x\|_{\infty}.$$

If $f^p|_{J_k}>\frac12 g^p|_{J_k},$ then we set $S_k=M_{gf^{-1}}.$ Clearly, $\|S_kx\|_{\infty}\leq 2^{\frac1p}\|x\|_{\infty}$ and $\|S_kx\|_p\leq 2^{\frac1p}\|x\|_p.$

We define $S : \X \to \X$ by:
$$S=\bigoplus_{k\geq0}S_k.$$
 Remark that for any $r\in [0,\infty]$, $\norm{S}_{r\to r} = \sup_{k\geq 0} \norm{S_k}_{L_r\to L_r}$. Hence,
$$\|S\|_{L_p\to L_p}\leq  3^{\frac1p},\quad \|S\|_{L_{\infty}\to L_{\infty}}\leq 2^{\frac1p}.$$
It remains only to note that $Sf=g.$

{\it Step 2:} Now, only assume that $f$ and $g$ are positive and non-increasing. Define for any $n\in\Zb$,
$$a_n = \sup\set{t \in (0,\infty) : f(t) \geq 2^{\frac{n}2}}\ \text{and}
\ b_n = \sup\set{t \in (0,\infty) : g(t) \geq 2^{\frac{n}2}}.$$
Let $\A$ be the $\sigma$-algebra generated by the intervals $(a_n,a_{n+1})$ and $(b_n,b_{n+1})$. Define
$$f_0^p = \Eb[f^p \mid \A]\ \text{and}\ g_0^p = \Eb[g^p \mid \A].$$
Note that $f_0$ and $g_0$ are step functions such that
$$g_0^p \head f_0^p,\quad 2^{-\frac12}f \leq f_0 \leq 2^{\frac12}f\quad \text{and}\quad 2^{-\frac12}g \leq g_0 \leq 2^{\frac12}g.$$
Apply Step 1 to $f_0$ and $g_0$ to obtain an operator $S$ and set
$$T = M_{ff_0^{-1}} \circ S \circ M_{g_0g^{-1}}.$$
Clearly, $Tf = g$ and
$$\norm{T}_{L_p \to L_p} \leq 2 \norm{S}_{L_p \to L_p} \leq 2\cdot 2^{\frac1p}\quad \text{ and }
\norm{T}_{L_\infty \to L_\infty} \leq 2 \norm{S}_{L_\infty \to L_\infty} \leq 2^2\cdot 2^{\frac1p}.$$

\end{proof}

\begin{lem}\label{second operator lemma} Let $f,g\in L_q(0,\infty)$ be positive non-increasing functions such that $g^q\tail f^q$. Let $d>1$. There exists a linear operator $T:\mathcal{X}(0,\infty)\to\mathcal{X}(0,\infty)$ such that $g=T(f)$ and
$$\|T\|_{L_0\to L_0}\leq 4,\quad \|T\|_{L_q\to L_q}\leq 2\cdot 3^{\frac1q}.$$
\end{lem}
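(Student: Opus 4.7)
The proof mirrors the architecture of Lemma \ref{first operator lemma}, with Lemma \ref{third partition lemma} replacing Lemma \ref{first partition lemma} at the top level and with the $L_\infty$-bound on each elementary piece replaced by an $L_0$-bound. I would proceed in two steps: first treat the case where $f,g$ are positive non-increasing step functions, then remove this restriction via an exponential-bucket sandwich of base $d$.

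For the step-function case, apply Lemma \ref{third partition lemma} to $f^q$ and $g^q$ (since $g^q \tail f^q$ in particular implies $g^q \taill f^q$) to obtain pairwise disjoint sets $\{\Delta_k\}$ with $f^q|_{\Delta_k}\head g^q|_{\Delta_k}$ for every $k$, and $g\leq f$ on the complement $\Delta_\infty := (0,\infty)\setminus\bigcup_k\Delta_k$. On $\Delta_\infty$ one takes the multiplication operator $M_{(g/f)\chi_{\Delta_\infty}}$, which is a contraction on both $L_0$ and $L_q$. Inside each $\Delta_k$ one subdivides further, via Lemma \ref{first partition lemma} applied with $f^q$ and $g^q$ swapped so that the direction of head-majorization matches its hypothesis, into countably many pairs of disjoint intervals $I\cup J$ on which $f$ and $g$ are both constant, $f^q|_{I\cup J}\head g^q|_{I\cup J}$ with equal $q$-integrals, and on which $g$ is more peaked than $f$.

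On each such pair, $S_{I,J}$ is built dually to Lemma \ref{first operator lemma}: since here $g$ is the more peaked function, the operator must \emph{collect} values from $J$ into the peak of $g$ on $I$ rather than spread the peak of $f$ across $I\cup J$. When $g|_I\leq 2f|_I$ one sets $S_{I,J} = M_{g/f}$; otherwise, assuming without loss of generality $m(I)\leq m(J)$, one defines
\[
S_{I,J}(x) = x\chi_I + \frac{g|_I - f|_I}{f|_J}(x\circ\phi)\chi_I + \frac{g|_J}{f|_J}x\chi_J,
\]
where $\phi:I\to J$ is a measure-preserving map (obtained, when $m(J)>m(I)$, by splitting $J$ into $\lceil m(J)/m(I)\rceil$ sub-intervals of measure $m(I)$ and summing the corresponding pullback terms with appropriately split coefficients so that $S_{I,J}f|_{I\cup J}=g|_{I\cup J}$ is preserved). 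A computation analogous to Step 1 of Lemma \ref{first operator lemma} yields the requisite $L_q$- and $L_0$-bounds on each $S_{I,J}$, and assembling all of them into a direct sum together with the contribution from $\Delta_\infty$ preserves these bounds, since the pieces act on disjoint supports.

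The step-function assumption is then removed by the same sandwich argument as in Step 2 of Lemma \ref{first operator lemma}. Define $f_0,g_0$ by $f_0^q = \Eb[f^q\mid \A]$ and $g_0^q = \Eb[g^q\mid \A]$, where $\A$ is the $\sigma$-algebra generated by the level sets $\{f^q\in[d^n,d^{n+1})\}$ and $\{g^q\in[d^n,d^{n+1})\}$; then $f_0,g_0$ are step functions with $g_0^q\tail f_0^q$ and $f_0/f, g_0/g\in[d^{-1/q},d^{1/q}]$. Applying the step-function construction to $(f_0,g_0)$ yields $S_0$, and $T := M_{g/g_0}\circ S_0\circ M_{f_0/f}$ satisfies $Tf = g$: multiplications have $L_0$-norm bounded by $1$, so the $L_0$-bound on $S_0$ propagates directly to $T$, whereas the $L_q$-bound picks up only a factor $d^{2/q}$. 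The principal obstacle throughout is the construction of $S_{I,J}$ with simultaneous $L_q$ and $L_0$ control: the $L_0$-bound forces $\phi$ to be measure-preserving (unlike the affine stretch used in Lemma \ref{first operator lemma} for the $L_\infty$-bound), so the case $m(J)>m(I)$ requires splitting $J$ and summing several pullback terms whose coefficients must still combine to $(g|_I-f|_I)/f|_J$ and whose $L_q$-contributions must be controlled piece by piece, ultimately producing the claimed constants.
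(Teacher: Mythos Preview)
Your global architecture (reduce to step functions, partition into two-valued pieces, build elementary operators, take direct sums) matches the paper, but two features of your construction diverge from it and the second one is a genuine gap.

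First, the detour through Lemma \ref{third partition lemma} is unnecessary. The hypothesis $g^q\tail f^q$ is \emph{equivalent} to $f^q\head g^q$ (both say $g^q\taill f^q$ together with $\|f\|_q=\|g\|_q$), so after reducing to step functions one may apply Lemma \ref{first partition lemma} directly to the pair $(g^q,f^q)$, exactly as the paper does. Your extra layer is harmless but redundant: on the complement $\Delta_\infty$ one has $g=f$, and on each $\Delta_k$ one already has the hypothesis of Lemma \ref{first partition lemma}.

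Second, and more seriously, your elementary operator $S_{I,J}$ rests on the belief that ``the $L_0$-bound forces $\phi$ to be measure-preserving''. This is false, and it is precisely what the paper exploits. On each pair $I\cup J$ (with $I$ to the left, so that $g|_I\geq f|_I\geq f|_J\geq g|_J$), the paper's case distinction is not $g|_I\lessgtr 2f|_I$ but rather $\|f\chi_I\|_q\lessgtr\|f\chi_J\|_q$. In the hard case $\|f\chi_I\|_q\leq\|f\chi_J\|_q$ one has $m(I)\leq m(J)$, and the paper takes a \emph{linear bijection} $l:I\to J$ and sets
\[
S_k x=\frac{g|_I}{f|_J}\,(x\circ l)\chi_I+\frac{g|_J}{f|_J}\,x\chi_J.
\]
The $L_0$-bound holds because $l$ stretches $I$ onto $J$, so $x\mapsto (x\circ l)\chi_I$ \emph{shrinks} supports by the factor $m(I)/m(J)\leq1$; no measure-preservation is needed. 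The same Jacobian factor appears in the $L_q$-computation as $\|(x\circ l)\chi_I\|_q^q=\frac{m(I)}{m(J)}\|x\chi_J\|_q^q$, and it exactly compensates the large coefficient $(g|_I/f|_J)^q$ via the inequality $g^q|_I\,m(I)\leq 2f^q|_J\,m(J)$ that the paper's case condition guarantees. Your construction, by contrast, keeps an identity term $x\chi_I$, uses a measure-preserving $\phi$, and then tries to recover the missing Jacobian by splitting $J$ into $\lceil m(J)/m(I)\rceil$ pieces; but with your case distinction $g|_I>2f|_I$ there is no uniform control on the ratio $f^q|_I\,m(I)/(f^q|_J\,m(J))$, so the $L_q$-estimate on your sum of pullback terms does not close with the stated constants. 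The fix is simply to adopt the paper's case split and its affine bijection.
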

\begin{proof}
Following step 2 of Lemma \ref{first operator lemma}, we are reduced to dealing with step functions.

Apply Lemma \ref{first partition lemma} to the functions $g^q$ and $f^q$ and let $(I_k)_{k\geq 1}$ and $(J_k)_{k\geq 1}$ be as in Lemma \ref{first partition lemma}. Without loss of generality, the intervals $I_k$ is located to the left of the intervals $J_k.$

Let $k\geq 1$. Define the mappings $S_k:\mathcal{X}(I_k\cup J_k)\to\mathcal{X}(I_k\cup J_k)$ as below.

Note that since $f^q|_{I_k\cup J_k}\head g^q|_{I_k\cup J_k},$
$$g|_{I_k}\geq f|_{I_k}\geq f|_{J_k} \geq g|_{J_k}.$$
 The construction of $S_k$ will depend on whether $\|f\chi_{I_k}\|_q\leq\|f\chi_{J_k}\|_q$ or $\|f\chi_{I_k}\|_q>\|f\chi_{J_k}\|_q.$

If $\|f\chi_{I_k}\|_q\leq\|f\chi_{J_k}\|_q,$ then $m(I_k)\leq m(J_k)$ and
$$g_0^q|_{I_k}\cdot m(I_k)\leq 2f^q|_{J_k}\cdot m(J_k).$$

Let $l_k:I_k\to J_k$ be a linear bijection. We set
$$S_kx=\frac{g|_{I_k}}{f|_{J_k}}(x\circ l_k)\chi_{I_k}+\frac{g}{f}x\chi_{J_k}.$$
Let $x \in L_q$.
$$\|S_kx\|_{0}\leq \|x\chi_{J_k}\|_0+\|(x\circ l_k)\chi_{I_k}\|_0\leq \p{1+\frac{m(I_k)}{m(J_k)}}\|x\|_0.$$
Thus, $\|S_k\|_{L_0\to L_0}\leq 2.$ We have
$$\|S_kx\|_q^q=\frac{g^q|_{I_k}}{f^q|_{J_k}}\|(x\circ l_k)\chi_{I_k}\|_q^q+\frac{g^q|_{J_k}}{f^q|_{J_k}}\|x\chi_{J_k}\|_q^q\leq$$
$$\leq \frac{g^q|_{I_k}}{f^q|_{J_k}}\cdot \frac{m(I_k)}{m(J_k)}\cdot\|x\|_q^q+\frac{g^q|_{J_k}}{f^q|_{J_k}}\|x\|_q^q\leq 3\|x\|_q^q.$$

If $\|f\chi_{I_k}\|_q>\|f\chi_{J_k}\|_q,$ then
$$g^q|_{I_k}\cdot m(I_k)\leq 2f^q|_{I_k}\cdot m(I_k)\mbox{ and, therefore, }g_0\leq 2^{\frac1q}f_0.$$
We set $S_k=M_{gf^{-1}}$. Obviously, $\|S_k\|_{L_0\to L_0}\leq 1$ and $\|S_k\|_{L_q\to L_q}\leq 2^{\frac1q}.$

We set
$$S=\bigoplus_{k\geq0}S_k.$$
Since  $S:\mathcal{X}(0,\infty)\to\mathcal{X}(0,\infty)$ is defined as a direct sum:
$$\|S\|_{L_0\to L_0} = \sup_{k\geq 1} \norm{S_k}_{L_0\to L_0} \leq 2,\quad \|S\|_{L_q\to L_q}= \sup_{k\geq 1} \norm{S_k}_{L_q\to L_q}\leq 3^{\frac1q}.$$

It remains only to note that $Sf=g.$
\end{proof}

\begin{lem}\label{third operator lemma} Let $f,g\in (L_p+L_{\infty})(0,\infty)$ be such that $|g|^p\headd|f|^p$, $f=\mu(f)$ and $g=\mu(g)$. There exists a linear operator $T:\mathcal{X}\to\mathcal{X}$ such that $g=T(f)$ and
$$\|T\|_{L_p\to L_p}\leq 2\cdot 3^{\frac1p},\quad \|T\|_{L_{\infty}\to L_{\infty}}\leq 2\cdot 2^{\frac1p}.$$
\end{lem}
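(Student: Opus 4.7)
The proof will combine the structural decomposition of Lemma \ref{second partition lemma} with the operator-construction Lemma \ref{first operator lemma}, paralleling how Lemma \ref{second operator lemma} exploited Lemma \ref{first partition lemma}. Since $f,g\in L_p+L_\infty$ are non-increasing, the functions $f^p,g^p$ lie in $L_1+L_\infty$ and satisfy $g^p\headd f^p$, so Lemma \ref{second partition lemma} applies to $(f^p,g^p)$ and produces pairwise disjoint measurable sets $\{\Delta_k\}_{k\geq 0}$ with $g^p|_{\Delta_k}\head f^p|_{\Delta_k}$ for every $k$, together with the pointwise estimate $g\leq f$ on the complement $E:=(0,\infty)\setminus\bigcup_k\Delta_k$.

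With this decomposition in hand, I would construct $T$ piece by piece. On $E$, set $T_E:=M_{(g/f)\chi_E}$ (with the convention $0/0:=0$); since $0\leq g/f\leq 1$ on $E$, $T_E$ is a contraction on both $L_p$ and $L_\infty$, and $T_E(f\chi_E)=g\chi_E$. On each $\Delta_k$, the restrictions $f|_{\Delta_k},g|_{\Delta_k}$ remain non-increasing (in the order inherited from $(0,\infty)$) and satisfy $g^p|_{\Delta_k}\head f^p|_{\Delta_k}$, so Lemma \ref{first operator lemma} supplies an operator $T_k$ on $\mathcal{X}(\Delta_k)$ with $T_k(f|_{\Delta_k})=g|_{\Delta_k}$ and
$$\|T_k\|_{L_p\to L_p}\leq 2\cdot 3^{1/p},\qquad \|T_k\|_{L_\infty\to L_\infty}\leq 2\cdot 2^{1/p}.$$
Gluing by $T(h):=T_E(h\chi_E)+\sum_{k\geq 0}T_k(h\chi_{\Delta_k})$, the outputs have pairwise disjoint supports in $E,\Delta_0,\Delta_1,\dots$, so $\|T(h)\|_p^p$ decomposes as a $p$-th-power sum and $\|T(h)\|_\infty$ as a supremum. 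Since the contraction bound $\|T_E\|\leq 1$ is dominated by $2\cdot 3^{1/p}$ and $2\cdot 2^{1/p}$, the claimed bounds follow, and $T(f)=g$ by construction.

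The main technical nuisance is that Lemma \ref{first operator lemma} is stated on $(0,\infty)$ rather than on an arbitrary measurable subset. I would bypass this through an order-preserving, measure-preserving bijection $\phi_k:\Delta_k\to(0,m(\Delta_k))$, which sends the non-increasing $f|_{\Delta_k},g|_{\Delta_k}$ to non-increasing functions on an interval; from the proof of Lemma \ref{second partition lemma}, $\Delta_k$ sits inside the bounded interval $(H(a_k),b_k)$ so $m(\Delta_k)<\infty$ and the transferred functions lie in $L_p$, making Lemma \ref{first operator lemma} directly applicable. Transporting the resulting operator back to $\Delta_k$ via $\phi_k$ yields $T_k$. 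Alternatively, one can simply observe that the construction in Lemma \ref{first operator lemma} only uses that the underlying space is non-atomic and $\sigma$-finite, which $\Delta_k$ certainly is.
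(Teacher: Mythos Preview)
Your proposal is correct and follows essentially the same approach as the paper: apply Lemma \ref{second partition lemma} to obtain the sets $\Delta_k$, use Lemma \ref{first operator lemma} on each $\Delta_k$, use multiplication by $g/f$ on the complement, and take the direct sum. Your explicit treatment of the transfer from $\Delta_k$ to an interval via an order- and measure-preserving bijection is in fact more careful than the paper's proof, which simply invokes Lemma \ref{first operator lemma} on $\mathcal{X}(\Delta_k)$ without further comment (and, incidentally, records the looser constants $2\cdot 9^{1/p}$ and $2\cdot 4^{1/p}$ in its concluding display).
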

\begin{proof}Let $(\Delta_k)_{k\geq0}$ be as in Lemma \ref{second partition lemma} and let $\Delta_{\infty}$ be the complement of $\bigcup_{k\geq0}\Delta_k.$ By Lemma \ref{first operator lemma}, there exists $T_k:\mathcal{X}(\Delta_k)\to\mathcal{X}(\Delta_k)$ such that $T_k(f)=g$ on $\Delta_k$ and such that
$$\|T_k\|_{L_p(\mathcal{X}_k)\to L_p(\mathcal{X}_k)}\leq 2\cdot 9^{\frac1p},\quad \|T_k\|_{L_{\infty}(\mathcal{X}_k)\to L_{\infty}(\mathcal{X}_k)}\leq 2\cdot 4^{\frac1p}.$$

Set $T_{\infty}=M_{\frac{g}{f}}$ on $\mathcal{X}(\Delta_{\infty}).$ We now set
$$T=T_{\infty}\bigoplus\Big(\bigoplus_{k\geq0}T_k\Big).$$
Obviously, $Tf=g$ on $(0,\infty)$ and
$$\|T\|_{L_p\to L_p}\leq 2\cdot 9^{\frac1p},\quad \|T\|_{L_{\infty}\to L_{\infty}}\leq 2\cdot 4^{\frac1p}.$$
\end{proof}

\begin{lem}\label{fourth operator lemma} Let $f,g\in (L_0+L_q)(0,\infty)$ be such that  $|g|^q \taill \md{f}^q$, $f=\mu(f)$ and $g=\mu(g)$. There exists a linear operator $T:\mathcal{X}\to\mathcal{X}$ such that $g=T(f)$ and
$$\|T\|_{L_0\to L_0}\leq 4,\quad \|T\|_{L_q\to L_q}\leq 2\cdot 3^{\frac1q}.$$
\end{lem}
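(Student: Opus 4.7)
The proof should mirror that of Lemma \ref{third operator lemma}, exchanging head-majorization tools for their tail-majorization counterparts. The idea is to decompose $(0,\infty)$ into a ``trivial'' region where $g\leq f$ pointwise, on which a multiplication operator suffices, and a countable family of disjoint regions on which the problem reduces to the tail-majorization version handled by Lemma \ref{second operator lemma}.

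First, I would apply Lemma \ref{third partition lemma} to the non-negative, non-increasing functions $f^q$ and $g^q$. Since $g^q\taill f^q$, this yields a collection of pairwise disjoint sets $\{\Delta_k\}_{k\geq 0}$ such that $f^q|_{\Delta_k}\head g^q|_{\Delta_k}$ for every $k$, together with the set $\Delta_\infty := (0,\infty)\setminus\bigcup_{k\geq 0}\Delta_k$ on which $g\leq f$ pointwise. Recalling that $f^q|_{\Delta_k}\head g^q|_{\Delta_k}$ is equivalent to $g^q|_{\Delta_k}\tail f^q|_{\Delta_k}$, each $\Delta_k$ is exactly the setting required to invoke Lemma \ref{second operator lemma}.

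On the ``easy'' piece $\Delta_\infty$, I would set $T_\infty := M_{g/f}$, understood to vanish where $f=0$. Since $g\leq f$ on $\Delta_\infty$ and the construction of the partition forces $\{g>0\}\cap\Delta_\infty\subseteq\{f>0\}$, this multiplication operator is well-defined, satisfies $T_\infty f = g$ on $\Delta_\infty$, and is a contraction on both $L_0(\Delta_\infty)$ and $L_q(\Delta_\infty)$. On each $\Delta_k$, the restrictions of $f$ and $g$ are non-increasing with respect to the order inherited from $(0,\infty)$, since $f=\mu(f)$ and $g=\mu(g)$. Therefore, the order-preserving measure-preserving bijection $t\mapsto m(\Delta_k\cap(0,t])$ from $\Delta_k$ to $(0,m(\Delta_k))$ transports them to non-increasing functions on an interval to which Lemma \ref{second operator lemma} applies directly. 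Pulling back produces an operator $T_k:\X(\Delta_k)\to\X(\Delta_k)$ with $T_kf=g$ on $\Delta_k$ and norm bounds $\|T_k\|_{L_0\to L_0}\leq 4$, $\|T_k\|_{L_q\to L_q}\leq 2\cdot 3^{1/q}$; no norm is lost in this transport, since a measure-preserving bijection acts isometrically on both $L_0$ and $L_q$.

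Finally, I would assemble $T:=T_\infty\oplus\bigoplus_{k\geq 0} T_k$. The summands act on disjoint parts of $(0,\infty)$, so the $L_0$ and $L_q$ operator norms of $T$ equal the suprema of the norms of its summands, yielding exactly the bounds stated in the lemma. By construction, $Tf=g$ throughout $(0,\infty)$. There is no serious obstacle in this argument; the only point deserving care is the rearrangement on each $\Delta_k$, which must be order-preserving in order to preserve the non-increasing character of the restrictions so that Lemma \ref{second operator lemma} can be invoked cleanly.
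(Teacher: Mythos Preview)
Your proposal is correct and follows essentially the same approach as the paper's proof: apply Lemma \ref{third partition lemma}, use Lemma \ref{second operator lemma} on each $\Delta_k$, a multiplication operator $M_{g/f}$ on $\Delta_\infty$, and take the direct sum. You are in fact more explicit than the paper in spelling out the order-preserving measure-preserving transport from each $\Delta_k$ to an interval before invoking Lemma \ref{second operator lemma}, a detail the paper leaves implicit.
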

\begin{proof} Without loss of generality, $g=\mu(g)$ and $f=\mu(f).$ Let $(\Delta_k)_{k\geq0}$ be as in Lemma \ref{third partition lemma} and let $\Delta_{\infty}$ be the complement of $\bigcup_{k\geq0}\Delta_k.$ By Lemma \ref{second operator lemma}, there exists $T_k:\mathcal{X}(\Delta_k)\to\mathcal{X}(\Delta_k)$ such that $T_k(f)=g$ on $\Delta_k$ and such that
$$\|T_k\|_{L_0(\mathcal{X}_k)\to L_0(\mathcal{X}_k)}\leq  8,\quad \|T_k\|_{L_q(\mathcal{X}_k)\to L_q(\mathcal{X}_k)}\leq 2\cdot 9^{\frac1q}.$$

Set $T_{\infty}=M_{\frac{g}{f}}$ on $\mathcal{X}(\Delta_{\infty}).$ We now set
$$T=T_{\infty}\bigoplus\Big(\bigoplus_{k\geq0}T_k\Big).$$
Obviously, $Tf=g$ on $(0,\infty)$ and
$$\|T\|_{L_0\to L_0}\leq  4,\quad \|T\|_{L_q\to L_q}\leq 2\cdot 3^{\frac1q}.$$
\end{proof}

\section{Interpolation spaces for the couple $(L_p,L_q)$}\label{section:main}

In this section, we obtain characterizations of interpolation spaces for the couple $(L_p,L_q)$, in terms of the majorization notions studied earlier. The necessity of the condition we consider is a direct consequence of the constructions explained in the previous section. The fact that it is sufficient is shown by linking majorization to the $K$-functional.

\begin{thm}\label{thm: function 1}
Let $0\leq p < q \leq \infty$. Let $E$ be a quasi-Banach function space such that $E\in{\rm Int}(L_p,L_q).$ There exist $c_{p,E}$ and $c_{q,E}$ in $\Rb_{>0}$ such that:
\begin{enumerate}[{\rm (i)}]
\item suppose $p\neq 0$: for any $f\in E$ and $g\in L_p+L_\infty$ if $|g|^p \headd |f|^p,$ then $g\in E$ and \mbox{$\|g\|_E\leq c_{p,E}\|f\|_E;$}
\item suppose $q\neq \infty$: for any $f\in E$ and $g\in L_0+L_q$ if $|g|^q \taill |f|^q,$ then $g\in E$ and $\|g\|_E\leq c_{q,E}\|f\|_E.$
\end{enumerate}
\end{thm}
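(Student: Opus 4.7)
The plan is to reduce each assertion to the application of one of the operator-construction lemmas from Section \ref{bi-contraction}, followed by an invocation of the abstract interpolation theorems of Section 2. First I would symmetrize: since $L_p$ (respectively $L_0$), $L_q$ and $L_\infty$ are symmetric quasi-Banach function spaces, Lemma \ref{interpolation implies symmetry} (when $p>0$) and Lemma \ref{interpolation implies symmetry second lemma} (when $p=0$) allow us to replace $\|\cdot\|_E$ by an equivalent symmetric quasi-norm, at the cost of the constants $c_{p,E}$, $c_{q,E}$. Since $E$ is now symmetric and the relations $|g|^p\headd|f|^p$, $|g|^q\taill|f|^q$ depend only on the decreasing rearrangements $\mu(f)$ and $\mu(g)$, we may assume throughout that $f=\mu(f)$ and $g=\mu(g)$.

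For (i), Lemma \ref{third operator lemma} applied to the pair $f,g$ produces a linear operator $T$ with $Tf=g$ and with $\|T\|_{L_p\to L_p},\|T\|_{L_\infty\to L_\infty}\leq C(p)$. Since $T$ is bounded on the couple $(L_p,L_\infty)$ and $p<q\leq\infty$, real interpolation of quasi-Banach $L_r$-spaces (a standard fact, treated in \cite{Cwi81}) yields that $T$ is bounded on $L_q$ with norm controlled by a constant depending only on $p$ and $q$. In particular $T$ is a bounded operator on $(L_p,L_q)$, so by Theorem \ref{rem:def interpolation} it is bounded on $E$ by a constant $c_{p,q,E}$. Hence $g=Tf\in E$ and $\|g\|_E\leq c_{p,E}\|f\|_E$ with $c_{p,E}$ absorbing the various constants.

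For (ii) the argument is symmetric. Lemma \ref{fourth operator lemma} provides $T$ with $Tf=g$ and $\|T\|_{L_0\to L_0}\leq 4$, $\|T\|_{L_q\to L_q}\leq 2\cdot 3^{1/q}$. If $p=0$ we are done immediately: $T$ is bounded on $(L_0,L_q)$, so Theorem \ref{thm: inter L_0} bounds $\|T\|_{E\to E}$ uniformly and gives $g\in E$ with the desired estimate. If $p>0$, Corollary \ref{lem : Lp in Int L0 Lq} says any contraction on $(L_0,L_q)$ is a contraction on $L_p$; applying this after normalizing $T$ by the maximum of its $L_0$ and $L_q$ norms shows that $T$ is bounded on $L_p$ by a constant depending only on $p,q$. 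Thus $T$ is bounded on $(L_p,L_q)$ and Theorem \ref{rem:def interpolation} finishes the argument.

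The main obstacle is the passage from boundedness on the \emph{endpoint} couples $(L_p,L_\infty)$ and $(L_0,L_q)$ to boundedness on the \emph{interior} couple $(L_p,L_q)$ along which $E$ is assumed to interpolate; this is the step that genuinely requires Cwikel's quasi-Banach real interpolation for (i) and Corollary \ref{lem : Lp in Int L0 Lq} for (ii). Once this is in hand, everything else is a bookkeeping of quasi-norm constants arising from the symmetrisation step and from the operator lemmas of Section \ref{bi-contraction}.
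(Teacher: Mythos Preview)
Your proposal is correct and follows essentially the same route as the paper's proof: symmetrize via Lemmas \ref{interpolation implies symmetry}/\ref{interpolation implies symmetry second lemma}, reduce to $f=\mu(f)$, $g=\mu(g)$, apply the operator lemmas \ref{third operator lemma} and \ref{fourth operator lemma}, then transfer boundedness to the intermediate $L_r$ via interpolation (for (i)) or Corollary \ref{lem : Lp in Int L0 Lq} (for (ii)), and conclude with Theorem \ref{rem:def interpolation} or Theorem \ref{thm: inter L_0}. The only cosmetic difference is that the paper cites ``real or complex interpolation'' for the $(L_p,L_\infty)\Rightarrow L_q$ step whereas you cite Cwikel's quasi-Banach real interpolation; both serve the same purpose.
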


\begin{proof} By Lemma \ref{interpolation implies symmetry} (for $p>0$) or Lemma \ref{interpolation implies symmetry second lemma} (for $p=0$), we may assume without loss of generality that $E$ is a symmetric function space.
       
Assume that $p\neq 0$. Let $f\in E$ and let $g\in L_p+L_\infty$ be such that $\md{g}^p \headd \md{f}^p.$ Since $E$ is symmetric, we may assume without loss of generality that $f=\mu(f)$ and $g=\mu(g).$  By Lemma \ref{third operator lemma}, there exists an operator $T$ such that $T(f) = g$ and
$$\|T\|_{(L_p,L_\infty) \to (L_p,L_\infty)}\leq 2\cdot 3^{\frac1p}.$$
Recall that $L_q$ is an interpolation space for the couple $(L_p,L_{\infty})$ (one can take, for example, real or complex interpolation method). Let $c_{p,q}$ be the interpolation constant of $L_q$ for the couple $(L_p,L_{\infty}).$ We have
$$\|T\|_{L_q\to L_q}\leq  c_{p,q}\cdot 2\cdot 3^{\frac1p}.$$

Let $c_E$ be the interpolation constant of $E$ for $(L_p,L_q)$ (see Theorem \ref{rem:def interpolation}). Then,
$$\|T\|_{E\to E}\leq c_E\cdot \max\{1,c_{p,q}\}\cdot 2\cdot 3^{\frac1p}.$$
Thus,
$$\|g\|_E\leq \|T\|_{E\to E}\|f\|_E\leq c_E\cdot \max\{1,c_{p,q}\}\cdot 2\cdot 3^{\frac1p}\|f\|_E.$$
This proves the first assertion. The proof of the second one follows {\it mutatis mutandi} using Lemma \ref{lem : Lp in Int L0 Lq} instead of complex interpolation and (for $p=0$) Theorem \ref{thm: inter L_0} instead of Theorem \ref{rem:def interpolation}.
\end{proof}

\begin{lem}\label{cad decomp lemma}
Assume that $0 < p < q < \infty$. Let $f,g \in L_p+L_q$ such that $f = \mu(f)$ and $g = \mu(g)$. Suppose that at every $t>0,$ one of the following inequalities holds
$$\int_0^t g^pds\leq\int_0^t f^pds\mbox{ or }\int_t^{\infty}g^qds\leq\int_t^{\infty}f^qds.$$
Then, there exist $g_1,g_2 \in (L_p+L_q)^+$ satisfying: $g=g_1+g_2,$ $g_1^p\headd f^p$ and $g_2^q\taill f^q.$
\end{lem}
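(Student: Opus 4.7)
The plan is to define the continuous functions $\phi(t) := \int_0^t (f^p - g^p)\,ds$ and $\psi(t) := \int_t^\infty (f^q - g^q)\,ds$, both finite on $(0, \infty)$ since $f, g \in L_p + L_q$ are decreasing with $p < q$. Set $A := \{\phi \geq 0\}$ and $B := \{\psi \geq 0\}$, which are closed; the hypothesis translates to $A \cup B = (0, \infty)$, equivalently $A^c \subset B$.

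The candidate split I would try is $g_1 := g\chi_A$ and $g_2 := g\chi_{A^c}$. Then $g_1, g_2 \geq 0$ with $g_1 + g_2 = g$, and the two parts live on the ``right'' sets: $g_1$ on $A$ (where $\phi \geq 0$) and $g_2$ on $A^c \subset B$ (where $\psi \geq 0$).

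To verify $g_1^p \headd f^p$, I would use the identity $\int_0^t \mu(g_1)^p\,ds = \int_{A_t} g^p$, valid because $g$ is decreasing, where $A_t \subset A$ is the initial portion of $A$ of measure $t$ (obtained by sweeping $A$ from left to right). By continuity of $\phi$, the function $\phi$ vanishes at every boundary point of $A$, since $\phi \geq 0$ inside and $\phi \leq 0$ just outside; hence on each connected component $(\alpha, \beta)$ of $A$ and each $\gamma \in [\alpha, \beta]$ one has $\int_\alpha^\gamma g^p \leq \int_\alpha^\gamma f^p$. Summing over the components that together make up $A_t$ gives $\int_{A_t} g^p \leq \int_{A_t} f^p \leq \int_0^t f^p$, the last inequality holding because $A_t$ has measure $t$ and $f$ is decreasing.

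A symmetric argument should yield $g_2^q \taill f^q$; the key additional observation is that $\tau(t) := \inf\{u > 0 : m(A^c \cap (0, u)) \geq t\}$ lies in $\overline{A^c} \subset B$ (by closedness of $B$), so $\psi(\tau(t)) \geq 0$, and combined with $\tau(t) \geq t$ one obtains $\int_t^\infty \mu(g_2)^q = \int_{A^c \cap (\tau(t), \infty)} g^q \leq \int_{\tau(t)}^\infty g^q \leq \int_{\tau(t)}^\infty f^q \leq \int_t^\infty f^q$. The hard part will be making the bookkeeping of the decreasing rearrangement on the possibly very disconnected sets $A$ and $A^c$ rigorous, so that the telescoping of local estimates across connected components matches exactly with the initial (resp. terminal) integrals of $f$.
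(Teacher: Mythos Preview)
Your decomposition $g_1 = g\chi_A$, $g_2 = g\chi_{A^c}$ works and is genuinely different from the paper's argument. The paper does \emph{not} split along $A$ and $A^c$; instead it sets $u_+(t) = \inf\{s\in A: s\geq t\}$, defines $h_1(t) = g(u_+(t)-0)$ (so $h_1$ is already decreasing, equals $g$ on $A$, and is constant across each gap of $A$), puts $h_2 = g\chi_B$, verifies $h_1 + h_2 \geq g$, and finally normalises via $g_i = \dfrac{h_i}{h_1+h_2}\,g$. Your route avoids this normalisation entirely and is more direct; the paper's route has the cosmetic advantage that $h_1$ is decreasing from the outset, so no rearrangement bookkeeping on $A$ is needed.

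One point deserves tightening. Your verification of $g_1^p\headd f^p$ by ``summing over the connected components $(\alpha,\beta)$ of $A$'' is not literally valid: $A=\{\phi\geq 0\}$ is closed and may fail to be a countable union of nondegenerate intervals. The clean fix is to work with $A^c$ instead, which \emph{is} open and hence a countable union of intervals $(\alpha_k,\beta_k)$; since $\alpha_k,\beta_k$ are boundary points of $A$ (or $0$), one has $\phi(\alpha_k)=\phi(\beta_k)=0$ and therefore $\int_{\alpha_k}^{\beta_k}(f^p-g^p)=0$. Taking $r=r(t)$ minimal with $m(A\cap[0,r])=t$ (necessarily $r\in A$, as $A$ is closed), the set $A^c\cap[0,r]$ is a union of full such components, whence
\[
\int_{A_t}(f^p-g^p)=\phi(r)-\int_{A^c\cap[0,r]}(f^p-g^p)=\phi(r)\geq 0,
\]
and $\int_{A_t} f^p\leq\int_0^t f^p$ follows since $f$ is decreasing and $m(A_t)=t$. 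Your tail argument for $g_2$ is correct as written: $\tau(t)$ cannot lie in the interior of $A$, hence $\tau(t)\in\overline{A^c}\subset B$, and the chain of inequalities goes through.
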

\begin{proof} Set:
$$A=\Big\{t>0:\ \int_0^tg(s)^pds\leq\int_0^tf(s)^pds\Big\},$$
$$B=\Big\{t>0:\ \int_t^{\infty}g(s)^qds\leq\int_t^{\infty}f(s)^qds\Big\}.$$

Let
$$u_+(t)=\inf\{s\in A:\ s\geq t\},\quad u_-(t)=\sup\{s\in A: s\leq t\},$$
$$v_+(t)=\inf\{s\in B:\ s\geq t\},\quad v_-(t)=\sup\{s\in B: s\leq t\}.$$

\begin{sloppypar}
Note that, for $t\notin A,$ $f^p\chi_{(u_-(t),u_+(t))}\head g^p\chi_{(u_-(t),u_+(t))}$ and, therefore,
\mbox{$g(u_+(t)-0)\leq f(u_+(t)-0).$}
\end{sloppypar}

Set $h_1(t)=g(u_+(t)-0),$ $t>0.$ By definition, $u_+(t)\geq t$ for all $t>0.$ Since $g$ is decreasing, it follows that $h_1\leq g.$ Set $h_2=g\chi_B.$ Since $u_+(t)=t$ for $t\in A,$ it follows that $h_1=g$ on $A.$ Thus, $h_1+h_2\geq g\chi_A+g\chi_B\geq g.$

We claim that
$$\int_0^t\mu(s,h_1)^pds\leq\int_0^tf(s)^pds.$$

Indeed, for $t\in A,$ we have
$$\int_0^th_1(s)^pds\leq\int_0^tg(s)^pds\leq\int_0^tf(s)^pds.$$
For $t\notin A,$ we have $h_1(s)=g(u_+(t))$ for all $s\in (u_-(t),u_+(t)).$ Thus,
$$\int_0^th_1(s)^pds=\int_0^{u_-(t)}h_1(s)^pds+\int_{u_-(t)}^th_1(s)^pds\leq$$
$$\leq \int_0^{u_-(t)}g(s)^pds+\int_{u_-(t)}^tg(u_+(t))^pds.$$
Since
$$\int_0^{u_-(t)}g(s)^pds=\int_0^{u_-(t)}f(s)^pds,\quad g(u_+(t))\leq f(u_+(t)),$$
it follows that
$$\int_0^th_1(s)^pds\leq \int_0^{u_-(t)}f(s)^pds+\int_{u_-(t)}^tf(u_+(t))^pds\leq\int_0^tf(s)^pds.$$
Since $h_1=\mu(h_1),$ the claim follows.

We claim that
$$\int_t^{\infty}\mu(s,h_2)^qds\leq\int_t^{\infty}f(s)^qds.$$

For $t\in B,$ we have
$$\int_t^{\infty}h_2(s)^qds\leq\int_t^{\infty}g(s)^qds\leq\int_t^{\infty}f(s)^qds.$$
For $t\notin B,$ we have
$$\int_t^{\infty}h_2(s)^qds=\int_{v_+(t)}^{\infty}h_2(s)^qds\leq$$
$$\leq\int_{v_+(t)}^{\infty}g(s)^qds=\int_{v_+(t)}^{\infty}f(s)^qds\leq\int_t^{\infty}f(s)^qds.$$
In either case,
$$\int_t^{\infty}\mu(s,h_2)^qds\leq\int_t^{\infty}h_2(s)^qds\leq \int_t^{\infty}f(s)^qds.$$
This proves the claim.

Setting
$$g_1=\frac{h_1}{h_1+h_2}g,\quad g_2=\frac{h_2}{h_1+h_2}g,$$
we complete the proof.
\end{proof}

\begin{thm}\label{thm: monotonicity to inter}
 Let $0\leq p < q \leq \infty$ with either $p \neq 0$ or $q\neq \infty$. Let $E$ be a quasi-Banach function space. Assume that there exist $c_{p,E}$ and $c_{q,E}$ in $\Rb_{>0}$ such that:
\begin{enumerate}[{\rm (i)}]
\item if $p\neq 0$: for any $f\in E$ and $g\in L_0+L_p$, if $|g|^p \headd |f|^p,$ then $g\in E$ and \mbox{$\|g\|_E\leq c_{p,E}\|f\|_E;$}
\item if $q\neq \infty$: for any $f\in E$ and $g\in L_0+L_q$, if $|g|^q \taill |f|^q,$ then $g\in E$ and $\|g\|_E\leq c_{q,E}\|f\|_E;$
\end{enumerate}
Then $E$ belongs to ${\rm Int}(L_p,L_q).$
\end{thm}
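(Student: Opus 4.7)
The plan is to show, for any $T$ bounded on $(L_p,L_q)$, that $\mu(Tf)$ and a constant multiple of $\mu(f)$ satisfy the hypothesis of Lemma~\ref{cad decomp lemma}, so that applying hypotheses (i) and (ii) to the two resulting pieces delivers the conclusion. The degenerate cases $p=0$ and $q=\infty$ should be dispatched first: the former follows directly from Theorem~\ref{cor: taill to int}, and the latter from the head-majorization analogue of that theorem. So I focus on $0<p<q<\infty$. I will first use hypothesis (ii) together with Theorem~\ref{cor: taill to int} to see that $E\in\mathrm{Int}(L_0,L_q)$, which by Lemma~\ref{interpolation implies symmetry second lemma} lets me replace $\|\cdot\|_E$ by an equivalent symmetric quasi-norm. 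After rescaling, I may further assume $T$ is a bi-contraction. Fix $f\in E$ with $f=\mu(f)$ and set $g=\mu(Tf)$.

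The heart of the argument will be the following dichotomy: there exist constants $C_1,C_2>0$ depending only on $p,q$ such that for every $s>0$ either $\int_0^s g^p\leq C_1\int_0^s\mu(f)^p$ or $\int_s^\infty g^q\leq C_2\int_s^\infty\mu(f)^q$. To prove it, I would fix $s$ and split $f=f_1+f_2$ at the cutoff $s$ in the manner of the proof of Lemma~\ref{l0lq bicontraction lemma}, so that $\|f_1\|_p^p=\int_0^s\mu(f)^p$ and $\|f_2\|_q^q=\int_s^\infty\mu(f)^q$, whence $\|Tf_1\|_p\leq\|f_1\|_p$ and $\|Tf_2\|_q\leq\|f_2\|_q$ by bi-contractivity. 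Combining the subadditivity $\mu(r_1+r_2,Tf)\leq\mu(r_1,Tf_1)+\mu(r_2,Tf_2)$ with the $r$-quasi-triangle inequality, a Chebyshev estimate for the tail of $\mu(Tf_1)$, and Hölder for the head of $\mu(Tf_2)$, I expect to obtain the mixed bounds
\begin{align*}
\int_0^s g^p &\leq C\int_0^s\mu(f)^p+Cs^{1-p/q}\left(\int_s^\infty\mu(f)^q\right)^{p/q},\\
\int_s^\infty g^q &\leq Cs^{1-q/p}\left(\int_0^s\mu(f)^p\right)^{q/p}+C\int_s^\infty\mu(f)^q.
\end{align*}
Assuming both halves of the dichotomy fail at some $s$ with a large constant $M$ and substituting the first bound into the second should yield a contradiction: the exponent identity $(1-q/p)+(q/p)(1-p/q)=0$ makes all powers of $s$ cancel, leaving an inequality of the form $(M-C)^{1+q/p}<C^{1+q/p}$, which forces $M<2C$.

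Given the dichotomy, I would set $M=\max(C_1^{1/p},C_2^{1/q})$ and apply Lemma~\ref{cad decomp lemma} to $(g,M\mu(f))$ to produce a decomposition $g=g_1+g_2$ with $g_1^p\headd(M\mu(f))^p$ and $g_2^q\taill(M\mu(f))^q$. Since $\|Mf\|_E=M\|f\|_E$, hypothesis (i) gives $\|g_1\|_E\leq Mc_{p,E}\|f\|_E$ and hypothesis (ii) gives $\|g_2\|_E\leq Mc_{q,E}\|f\|_E$; the quasi-triangle inequality in $E$ together with the symmetry of $E$ then yield $\|Tf\|_E=\|g\|_E\leq C_EM(c_{p,E}+c_{q,E})\|f\|_E$, as required. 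The main obstacle will be the dichotomy step: the direct subadditivity bound is inherently mixed rather than clean, so closing the contradiction requires the algebraic cancellation described above. A more abstract alternative would invoke Holmstedt's formula and $K$-functional monotonicity to derive the dichotomy in one line, but that path depends on a result external to the paper.
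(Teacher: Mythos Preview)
Your overall strategy coincides with the paper's: obtain, for each $t>0$, the dichotomy ``head inequality holds or tail inequality holds'', feed this into Lemma~\ref{cad decomp lemma} to split $\mu(Tf)=g_1+g_2$, and then apply hypotheses (i) and (ii) to the two pieces. The degenerate case $p=0$ is indeed dispatched by Theorem~\ref{cor: taill to int}, exactly as in the paper.

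The one real difference is how the dichotomy is obtained. The paper takes precisely the ``abstract alternative'' you mention at the end: it cites Holmstedt's formula for $K(\cdot,f,L_p,L_q)$, which, combined with $K(t,Tf)\leq K(t,f)$, yields the dichotomy in one line with constant $c_{p,q}$. The paper also handles the case $q=\infty$ via Holmstedt (which gives $|Tf|^p\headd|c_pf|^p$ directly), not via a head-majorization analogue of Theorem~\ref{cor: taill to int}; no such analogue is stated in the paper, though it would be routine. Your self-contained dichotomy argument is effectively a by-hand proof of the relevant Holmstedt estimate---the mixed bounds you write down and the algebraic cancellation $(1-q/p)+(q/p)(1-p/q)=0$ are exactly what underlies Holmstedt's inequality---so the two routes are equivalent in content, with the paper trading self-containment for brevity. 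One preliminary step you omit but the paper includes: hypotheses (i) and (ii) are first used to verify the inclusion $E\subset L_p+L_q$, which is needed before one can speak of $K$-functionals or apply Lemma~\ref{cad decomp lemma}.
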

\begin{proof} Assume that $p\neq 0$. Let us show that the first condition implies $E\subset L_p+L_{\infty}.$ Indeed, assume the contrary and choose $f\in E$ such that $\mu(f)\chi_{(0,1)}\notin L_p.$ Let
$$f_n=\min\{\mu(\frac1n,f),\mu(f)\chi_{(0,1)}\},\quad n\geq 1.$$
Obviously, $\|f_n\|_E\leq\|\mu(f)\chi_{(0,1)}\|_E\leq\|f\|_E.$ On the other hand, $\|f_n\|_p^p\chi_{(0,1)}\headd f_n^p.$ By the first condition on $E,$ we have $\|f_n\|_p\|\chi_{(0,1)}\|_E\leq c_{p,E}\|f\|_E.$ However $\|f_n\|_p\uparrow\|\mu(f)\chi_{(0,1)}\|_p=\infty.$ This contradiction shows that our initial assumption was incorrect. Thus, $E\subset L_p+L_{\infty}.$

A similar argument shows that the second condition implies $E\subset L_0+L_q.$ Thus, a combination of both conditions implies $E\subset L_p+L_q.$

Let $T$ be a contraction on $(L_p,L_q)$ and $f\in E$. To conclude the proof, it suffices to show that $Tf$ belongs to $E$. First, note that
$$K(t,Tf,L_p,L_q)\leq K(t,f,L_p,L_q).$$
Assume that $p>0$ and $q<\infty.$ Let $\alpha^{-1}=\frac1p-\frac1q.$ By Holmstedt formula for the $K$-functional (see \cite{Hol70}), there exists a constant $c_{p,q} >0$ such that for any $t\in\Rb_{>0}$:
$$\Big(\int_0^{t^{\alpha}}\mu(s,Tf)^pds\Big)^{\frac1p}+t\Big(\int_{t^{\alpha}}^{\infty}\mu(s,Tf)^qds\Big)^{\frac1q}\leq$$
$$\leq c_{p,q}\Big(\Big(\int_0^{t^{\alpha}}\mu(s,f)^pds\Big)^{\frac1p}+t\Big(\int_{t^{\alpha}}^{\infty}\mu(s,f)^qds\Big)^{\frac1q}\Big).$$
Hence, for any given $t>0,$ we either have
$$\int_0^{t^{\alpha}}\mu(s,Tf)^pds\leq\int_0^{t^{\alpha}}\mu(s,c_{p,q}f)^pds$$
or
$$\int_{t^{\alpha}}^{\infty}\mu(s,Tf)^qds\leq \int_{t^{\alpha}}^{\infty}\mu(s,c_{p,q}f)^qds.$$

By Lemma \ref{cad decomp lemma}, one can write
\begin{equation}
\mu(Tf)=g_1+g_2,\quad g_1^p\headd (c_{p,q}\mu(f))^p,\quad g_2 \taill (c_{p,q}\mu(f))^q.
\end{equation}
By assumption, we have
$$\|g_1\|_E\leq c_{p,E}\|f\|_E,\quad \|g_2\|_E\leq c_{q,E}\|f\|_E.$$
By triangle inequality, we have
$$\|Tf\|_E\leq c_{p,q,E}\|f\|_E.$$

Assume now that $p>0$ and $q = \infty$. This case is simpler since by the Holmstedt formula (see \cite{Hol70}), there exists $c_p\in \Rb_{>0}$ such that for any $t\in\Rb_{>0}$:
$$\Big(\int_0^{t^{p}}\mu(s,Tf)^pds\Big)^{\frac1p}\leq c_{p}\Big(\int_0^{t^{p}}\mu(s,f)^pds\Big)^{\frac1p}.$$
This means that $\md{Tf}^p \headd \md{c_pf}^p$ so by assumption (1), $Tf$ belongs to $E$ and
$$\|Tf\|_E\leq c_p c_{p,E}\|f\|_E.$$

The case of $p = 0$ and $q<\infty$ is given by Theorem \ref{cor: taill to int}.
\end{proof}

Theorem \ref{thm : intro inter} claimed in the introduction compiles some results of this section.

\begin{proof}[Proof of Theorem \ref{thm : intro inter}]
The assertion (a) is obtained by combining Theorem \ref{thm: function 1} and Theorem \ref{thm: monotonicity to inter} with $q = \infty$.

The assertion (b) is derived similarly from Theorem \ref{thm: function 1} and Theorem \ref{thm: monotonicity to inter} by applying them with $p = 0$.

Finally, using (a), (b), Theorem \ref{thm: function 1} and \ref{thm: monotonicity to inter}, for $0 < p < q < \infty$, one obtains (c).
\end{proof}

\begin{rem}
{\rm
In the spirit of Corollary \ref{cor: taill to int}, we could have used a non-quantitative condition to deal with the case of $q = \infty$ in Theorem \ref{thm: monotonicity to inter}. Let $E$ be a quasi-Banach function space and $p,q \in (0,\infty)$. This means that the two following conditions are equivalent:
\begin{enumerate}[{\rm (i)}]
\item for any $f\in E$, $g\in L_p + L_\infty$,
$$\md{g}^p \headd \md{f}^p \Rightarrow g \in E.$$
\item there exists $c>0$ such that for any $f\in E$, $g\in L_p + L_\infty$,
$$\md{g}^p \headd \md{f}^p \Rightarrow g \in E\ \text{and}\ \norm{g}_E \leq c\norm{f}_E.$$
\end{enumerate}
Similarly, the two following conditions are equivalent:
\begin{enumerate}[{\rm (i)}]
\item for any $f\in E$, $g\in L_0 + L_q$,
$$\md{g}^q \taill \md{f}^q \Rightarrow g \in E.$$
\item there exists $c>0$ such that for any $f\in E$, $g\in L_0 + L_q$,
$$\md{g}^q \taill \md{f}^q \Rightarrow g \in E\ \text{and}\ \norm{g}_E \leq c\norm{f}_E.$$
\end{enumerate}
}
\end{rem}

\section{Interrpolation spaces for couples of $\ell^p$-spaces} \label{sequence}

In this section, we show that our approach to the Lorentz-Shimogaki and Arazy-Cwikel theorems also applies to sequence spaces. We follow a structure similar to the previous sections, proving partition lemmas, then constructing bounded operators on couples $(\ell^p,\ell^q)$ with suitable properties to finally draw conclusions on the interpolation spaces of the couple $(\ell^p,\ell^q)$. Additional arguments involving Boyd indices will be required to prove Theorem \ref{thm:conj LSZ}. 

We identify sequences with bounded functions on $(0,\infty)$ which are almost constant on intervals of the form $(k,k+1)$, $k\in\Zb^+$ by
\begin{align*}
  i \colon \ell^\infty &\to L_\infty\\
  (u_k)_{k\in\Zb^+} &\mapsto \sum_{k=0}^{\infty} u_k\Ind_{(k,k+1)}.
\end{align*}

\subsection{An interpolation theorem for the couple $(\ell^p,\ell^q)$}

We start with a partition lemma playing, for sequence spaces, the role of Lemma \ref{third partition lemma}.

\begin{lem} \label{sequence partition lemma 1}
Let $a = (a_n)_{n\in\Zb^+}$, $b = (b_n)_{n\in\Zb^+}$ be two  positive decreasing sequences such that $b\taill a$. There exists a sequence $(\Delta_n)_{n\in\Zb^+}$ of subsets of $\Zb^+$ such that:
\begin{enumerate}[{\rm (i)}]
\item for every $k\in\mathbb{Z}_+,$ we have $\md{\set{n\in\Zb^+ : k\in \Delta_n}} \leq 3.$
\item $\sum_{k\in \Delta_n} a_k \geq b_n$ for any $n\in\Zb^+.$
\end{enumerate}
\end{lem}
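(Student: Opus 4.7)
The lemma is the discrete counterpart of Lemma~\ref{third partition lemma}, and the plan is to follow the same structure, with modifications forced by the fact that in the sequence case one cannot split an atom $a_k$. This rigidity is precisely what prevents an overlap of $2$ (as in the continuous case) and forces the constant $3$.

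Concretely, the plan is to mimic the continuous pairing. Using the tail majorization, I would identify the maximal runs of indices where $b_n > a_n$ (``deficit blocks'') and pair each with a later run of indices where $a_n \geq b_n$ using a discrete analog of the function $H$ from the proof of Lemma~\ref{third partition lemma}, defined through the running tail sums $\sum_{k \geq n}(a_k-b_k)_+$ and $\sum_{k \geq n}(a_k-b_k)_-$. Outside every paired block one has $b_n \leq a_n$, so I would set $\Delta_n = \{n\}$. For an index $n$ that lies in the compensation part of a paired block, the same choice $\Delta_n = \{n\}$ still works. For an index $n$ in the deficit part of a paired block, I would set $\Delta_n = \{n\} \cup J_n$, where $J_n$ is a minimal set of contiguous indices from the paired compensation block whose $a$-weights sum to at least $b_n - a_n$. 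The tail majorization restricted to each paired block amounts to head majorization of $(a_n)$ over $(b_n)$ with equal total mass on the block, which guarantees that such a $J_n$ always exists.

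The main obstacle is the overlap bound $|\{n : k \in \Delta_n\}| \leq 3$. My count would show that $k$ can enter $\Delta_n$ in at most three distinct ways: (i) trivially as itself, when $n = k$; (ii) as part of the greedy set $J_n$ for the first deficit index on whose behalf $k$ is called into the compensation; (iii) as part of $J_{n'}$ for the adjacent deficit index $n'$, in case $a_k$ is the ``overshoot'' atom that completes $J_n$ with slack which is then re-used to start $J_{n'}$. Ruling out a fourth use relies on the minimality of the greedy selections: once the running surplus carried into $k$ between two consecutive deficit positions is exhausted, the later deficits within the block no longer need to draw on $k$. This amortization is the main combinatorial point, and the constant $3$ is a direct reflection of these three possibilities: one for self-use, and one each for the two ``sides'' on which $k$ can be reused as an overshoot atom shared between consecutive greedy selections.
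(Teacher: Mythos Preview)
Your plan is different from the paper's and, as written, leaves the crucial overlap bound unproved.

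The paper does not discretise Lemma~\ref{third partition lemma}. Instead it introduces the single global index
\[
i_n=\sup\Big\{i:\ \sum_{k\geq i}a_k\geq\sum_{k\geq n}b_k\Big\},
\]
and sets $\Delta_n=\{n\}$ when $b_n\leq a_n$, while $\Delta_n=\{i_n,\ldots,i_{n+1}\}$ when $b_n>a_n$. Condition~(ii) drops out as the difference of the defining inequalities for $i_n$ and $i_{n+1}$; the overlap bound follows from the single observation that $b_n>a_n$ forces $i_{n+1}>i_n$, so distinct intervals $\{i_n,\ldots,i_{n+1}\}$ can share at most an endpoint. That gives at most two hits from indices $n\in I:=\{n:b_n>a_n\}$, plus one more from $\Delta_k=\{k\}$ with $k\notin I$. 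No deficit blocks, no greedy pass, no discrete $H$.

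Your two-layer scheme (pair deficit blocks with compensation ranges via a discrete $H$, then greedily carve each compensation range into the $J_n$'s) has two separate sources of overlap, but your accounting addresses only the inner one. First, you never specify where $J_n$ begins; without the sequential rule ``$J_{n+1}$ starts at the overshoot atom of $J_n$'' the construction fails outright, since every $J_n$ would start at the first compensation index and the overlap is unbounded. Second, even granting that rule, your three-use count (self, end of one $J_n$, start of the adjacent $J_{n'}$) presupposes that $k$ sits in the compensation range of a \emph{single} deficit block. A discrete $H$ will generically let adjacent compensation ranges share a boundary atom, and that atom can then serve as the terminal overshoot of one block's greedy and be re-entered by the next block's greedy, pushing the count past three. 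Finally, ``equal total mass on the block'' is unavailable discretely; you only get an inequality, which is enough for the existence of $J_n$ but shows the framing via $\head$ on the block is off. All of this is fixable, but the natural fix is to run the greedy globally rather than block by block---and that is exactly what the paper's one-line $i_n$ construction does.
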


\begin{proof}
Define $I = \set{n\in\Zb^+ : b_n > a_n}$. For $n\notin I$, set $\Delta_n = \set{n}$.

For any $n\in\Zb^+$, define:
$$i_n = \sup \set{i : \sum_{k=i}^{\infty} a_k \geq \sum_{k=n}^{\infty} b_k},$$
and for $n\in I$, $\Delta_n = \set{i_n,\dots,i_{n+1}}.$

From the definition of $i_n,$ we have
$$\sum_{k\geq i_n}a_k\geq\sum_{k\geq n}b_k.$$
From the definition of $i_{n+1},$ we have
$$\sum_{k>i_{n+1}}a_k<\sum_{k\geq n+1}b_k.$$
Taking the difference of these inequalities, we infer that
$$\sum_{k\in \Delta_n} a_k \geq b_n.$$
This proves the second condition.

Note that since $b \taill a,$ $i_n \geq n$ for any $n\in\Zb^+.$ Hence, if $n\in I,$ $b_n > a_n \geq a_{i_{n+1}}.$ Furthermore, by definition of $i_{n+1}$,
$$\sum_{k > i_{n+1}} a_k < \sum_{k = n+1}^{\infty} b_k\ \text{so} \sum_{k=i_{n+1}}^{\infty} a_k < \sum_{k=n}^{\infty}b_k.$$
Hence, by definition of $i_n$, $i_{n+1} > i_n$ for $n\in I.$

Let us now check the first condition. Suppose there exist distinct numbers $n_1,n_2,n_3\in I$ such that $k\in\Delta_{n_1},\Delta_{n_2},\Delta_{n_3}.$ Without loss of generality, $n_1<n_2<n_3.$ Since $k\in\Delta_{n_1},$ it follows that $k\leq i_{n_1+1}\leq i_{n_2}.$ Since $k\in\Delta_{n_3},$ it follows that $k\geq i_{n_3}\geq i_{n_2+1}.$ Hence, $i_{n_2+1}\leq k\leq i_{n_2}$ and, therefore, $i_{n_2+1}=i_{n_2}.$ Since $n_2\in I,$ it follows $i_{n_2+1}>i_{n_2}.$ This contradiction shows that $\md{\set{n\in I : k\in \Delta_n}} \leq 2.$ By definition, $k$ also belongs to at most one set $\Delta_n$, $n\notin I$. Consequently, $\md{\set{n\in\Zb^+ : k\in \Delta_n}} \leq 3.$
\end{proof}

From the partition lemma, we deduce an operator lemma similar to Lemma \ref{third operator lemma}. It extends Proposition 2 in \cite{Astashkin20}, which is established there for the special case $p=1$ by completely different method.

\begin{lem} \label{sequence operator lemma 1}
Let $p \geq 1$. Let $a,b \in \ell^p$ such that $\md{b}^p \taill \md{a}^p$. Then there exists an operator $T: \ell^p \to \ell^p$ such that:
\begin{enumerate}[{\rm (i)}]
\item $T(a) = b$.
\item $\norm{T}_{p\to p} \leq 3^{\frac1p}$ and $\norm{T}_{0\to 0} \leq 3.$
\end{enumerate}
\end{lem}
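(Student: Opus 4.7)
The plan is to use Lemma \ref{sequence partition lemma 1} (applied to $a^p$ and $b^p$) as the combinatorial backbone, and to exhibit $T$ explicitly as an ``$\ell^p$-averaging'' operator built from the partition. First I would reduce to the case where $a$ and $b$ are both non-negative and non-increasing by composing on either side with permutation operators and multiplication by unimodular sequences; these are isometries on both $\ell^p$ and $\ell^0$, so they do not affect the target bounds. Applying Lemma \ref{sequence partition lemma 1} to $a^p$ and $b^p$ then yields subsets $(\Delta_n)_{n\in\Zb^+}$ such that the quantities $c_n^p:=\sum_{k\in\Delta_n}a_k^p$ satisfy $c_n\geq b_n$, and such that each $k\in\Zb^+$ belongs to at most three of the $\Delta_n$.

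Next I would define $T$ by the matrix
\[
T(x)_n \;=\; \frac{b_n}{c_n^{p}}\sum_{k\in \Delta_n} a_k^{p-1}\,x_k,
\]
with $T(x)_n:=0$ when $c_n=0$ (in which case $b_n=0$ automatically), and with the convention that summands for which $a_k=0$ are dropped. The identity $T(a)=b$ is then immediate since $\sum_{k\in\Delta_n}a_k^p=c_n^p$.

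To bound $\|T\|_{\ell^p\to\ell^p}$ I would rewrite the operator as an expectation: for each $n$, the weights $\mu_k^{(n)}:=a_k^p/c_n^p$ form a probability measure on $\Delta_n\cap\{a_k>0\}$, and $T(x)_n=\frac{b_n}{c_n}\sum_{k\in\Delta_n}\mu_k^{(n)}\cdot(c_n x_k/a_k)$. Jensen's inequality applied to the convex function $t\mapsto|t|^p$ yields $|T(x)_n|^p\leq (b_n/c_n)^p\sum_{k\in\Delta_n}|x_k|^p$; summing over $n$, exchanging the order of summation, and using $b_n\leq c_n$ together with the bounded-overlap property gives $\|T(x)\|_p^p\leq 3\|x\|_p^p$. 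The bound $\|T\|_{\ell^0\to\ell^0}\leq 3$ is immediate from the same overlap property: if $x$ has support $S$, then $T(x)_n$ can be nonzero only when $\Delta_n\cap S\neq\varnothing$, and there are at most $3|S|$ such indices $n$.

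The main obstacle is the $\ell^p$-bound: Jensen's inequality requires convexity of $t\mapsto t^p$, which is precisely where the hypothesis $p\geq 1$ enters in an essential way, blocking a direct extension to the quasi-Banach range $p<1$. The overlap constant $3$ in Lemma \ref{sequence partition lemma 1} is also what forces the specific constants $3^{1/p}$ and $3$, in contrast with the diffuse setting of Section \ref{bi-contraction} where the analogous partitions have overlap one and yield sharper bounds.
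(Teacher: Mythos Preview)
Your proof is correct and follows essentially the same approach as the paper. Both arguments reduce to non-negative decreasing sequences, apply Lemma \ref{sequence partition lemma 1} to $a^p$ and $b^p$, and build $T$ coordinatewise from linear functionals supported on the $\Delta_n$; the paper simply asserts the existence of a norm-one functional $\varphi_n$ on $\ell^p(\Delta_n)$ with $\varphi_n(a)=b_n$, while you write down the explicit choice $\varphi_n(x)=\frac{b_n}{c_n^{p}}\sum_{k\in\Delta_n}a_k^{p-1}x_k$ and verify $\|\varphi_n\|\leq 1$ via Jensen's inequality (which is just H\"older in disguise). The $\ell^p$- and $\ell^0$-bounds are then derived identically from the overlap bound $|\{n:k\in\Delta_n\}|\leq 3$.
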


\begin{proof}
We can assume that both sequences are non-negative and decreasing. Apply Lemma \ref{sequence partition lemma 1} to $\md{a}^p$ and $\md{b}^p$. For every $n\in\Zb^+$, choose a linear form $\varphi_n$ on $\ell^p$ of norm less than $1$, supported on $\Delta_n$ and such that $\varphi_n(a) = \varphi_n(a\Ind_{\Delta_n}) = b_n$. Define
$$T: x \in \ell^p \mapsto (\varphi_n(x))_{n\in\Zb^+}.$$
By construction, $T(a) = b$. Let us check the norm estimates. Let $x\in\ell^p$,
\begin{align*}
\norm{T(x)}_p^p &= \sum_{n\in\Zb^+} \md{\varphi_n(x)}^p = \sum_{n\in\Zb^+} \md{\varphi_n(x\Ind_{\Delta_n)}}^p \\
&\leq \sum_{n\in\Zb^+} \sum_{k\in\Delta_n} \md{x_k}^p \\
&= \sum_{k\in\Zb^+} \md{\set{n:k\in\Delta_n}} \md{x_k}^p \\
&\leq 3\norm{x}_p^p.
\end{align*}
The second estimate is clear, using once again the fact that an integer $k$ belongs to at most three $\Delta_n$'s.
\end{proof}

The following remarks  were communicated to the authors by Cwikel and Nilsson.

\begin{rem} \label{rem : op on ellp to ell1}
 A bounded linear operator on $\ell^p$, $p\leq 1$ extends automatically to a bounded linear operator on $\ell^1.$
\end{rem}
\begin{proof}  Indeed, let $(e_n)_{n\in\mathbb{Z}^+}$ be the canonical basis of $\ell^\infty$. Let $T$ be a contraction on $\ell^p$, $p<1$. Then by H\"older's inequality $\norm{T(e_n)}_1 \leq \norm{T(e_n)}_p \leq\|T\|_{p\to p}.$ By the triangle inequality, for any finite sequence $a = (a_n)_{n\in\mathbb{Z}^+}$, $$\norm{T(a_n)}_1 \leq \sum_{n\in\mathbb{Z}^+} \md{a_n}\norm{T(e_n)}_1 \leq \norm{a}_1\|T\|_{p\to p}.$$
Hence, $T$ extends to a contraction on $\ell^1$.
\end{proof}

\begin{rem}
The condition $p\geq 1$ in Lemma \ref{sequence operator lemma 1} is necessary.
\end{rem}
\begin{proof} Let us show that Lemma \ref{sequence operator lemma 1} cannot be true for $p<1$. Assume by contradiction that there exists $c>0$ such that for any finite sequences $a$ and $b$ in $\ell^p$ such that $\md{b}^p \taill \md{a}^p$ there exists $T$ with $\norm{T}_{p\to p} \leq c$ and $T(a) = b$. By Remark \ref{rem : op on ellp to ell1} above, we also have $\norm{T}_{1\to 1} \leq c$. In particular, $\norm{b}_1 \leq c\norm{a}_1$. By considering $b = e_1$ and $a = \frac1{N^{1/p}}\sum_{i=1}^N e_i$ for $N$ large enough, one obtains a contradiction.
\end{proof}

We will not prove a sequence version of Lemma \ref{second operator lemma} to avoid the repetition of too many similar arguments. Fortunately, the expected result already appears in the literature, see \cite[Theorem 3]{Cwi81}:

\begin{lem}\label{sequence operator lemma 2}
Let $p > 0$. Let $a,b \in \ell^\infty$ such that $\md{b}^p \headd \md{a}^p$. Then there exists an operator $T: \ell^p \to \ell^p$ such that:
\begin{enumerate}[{\rm (i)}]
\item $T(a) = b$.
\item $\norm{T}_{p\to p} \leq 8^{1/p}$ and $\norm{T}_{\infty\to \infty} \leq 2^{1/p}$.
\end{enumerate}
\end{lem}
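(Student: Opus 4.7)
The plan is to follow the template of Lemmas \ref{second partition lemma}--\ref{third operator lemma} with the obvious sequence modifications; alternatively one may simply invoke Cwikel's construction in \cite[Theorem 3]{Cwi81}, which delivers exactly this statement with these constants. For a self-contained approach, I would first reduce to the case where $a=(a_n)_{n\in\mathbb{Z}^+}$ and $b=(b_n)_{n\in\mathbb{Z}^+}$ are positive and non-increasing, using that $\|\cdot\|_{\ell^p}$ and $\|\cdot\|_{\ell^\infty}$ are symmetric and that $|b|^p\headd|a|^p$ is invariant under passing to decreasing rearrangements; the permutations that effect the rearrangement are isometries on both $\ell^p$ and $\ell^\infty$ and can be absorbed into $T$.

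Next, I would establish a sequence analogue of Lemma \ref{second partition lemma}: construct a family of pairwise disjoint sets $(\Delta_k)_{k\geq 0}\subset\mathbb{Z}^+$ such that $|b|^p|_{\Delta_k}\head|a|^p|_{\Delta_k}$ (i.e.\ equal $\ell^1$-norm together with head majorization on $\Delta_k$), while $|b_n|^p\leq|a_n|^p$ for every $n\notin\bigcup_k\Delta_k$. The proof is a direct translation: the set $\{n:b_n^p>a_n^p\}$ breaks up into runs of consecutive integers, and each run is enlarged by absorbing, via the discrete counterpart of the map $H$ appearing in Lemma \ref{second partition lemma}, just enough indices on its left where $a^p\geq b^p$ to restore equality of the partial $\ell^1$-sums.

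Third, I would prove a sequence version of Lemma \ref{first operator lemma}: given positive non-increasing sequences $a',b'$ supported on a (finite or infinite) block of $\mathbb{Z}^+$ with $(b')^p\head(a')^p$ there, build an operator $T_k$ with $T_k(a')=b'$ and controlled bounds on both $\ell^p$ and $\ell^\infty$. As in the continuous case, this relies on first obtaining a pairing of sub-blocks $(I_k,J_k)$ by a discrete version of Lemma \ref{first partition lemma}; on each pair one then uses either the diagonal multiplication $M_{b'/a'}$ (when $(a')^p|_{J_k}$ is comparable to $(b')^p|_{J_k}$) or a ``transfer'' operator that redistributes the mass of $a'|_{I_k}$ onto $J_k$ via a bijection between $I_k$ and $J_k$ together with an appropriate rescaling. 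Finally the desired $T$ is assembled as $T=T_\infty\oplus\bigoplus_k T_k$, where $T_\infty=M_{b/a}$ acts on the complement of $\bigcup_k\Delta_k$ and is a contraction on both $\ell^p$ and $\ell^\infty$ because $b\leq a$ there.

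The main obstacle is this third step: in the continuous case one freely uses linear bijections between intervals of different lengths, which is unavailable in $\mathbb{Z}^+$. One must instead transfer mass through a discrete bijection (between $I_k$ and a subset of $J_k$) together with an averaging operation, and the extra $\ell^p$-cost of this averaging is precisely what produces the constant $8^{1/p}$ instead of the $2\cdot 3^{1/p}$ obtained in Lemma \ref{third operator lemma}. This is the same source of ``deficiency'' that the authors flagged when contrasting Lemma \ref{sequence operator lemma 1} with Lemma \ref{third operator lemma}, and in particular explains why the bound on $\|T\|_{\infty\to\infty}$ is nevertheless as clean as $2^{1/p}$: in the $\ell^\infty$-estimate the only loss is the factor $2$ coming from Lemma \ref{second partition lemma} (equivalently, from the two alternatives in the $M_{b/a}$ vs.\ transfer dichotomy).
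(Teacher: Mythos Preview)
The paper does not prove this lemma at all: it explicitly declines to do so (``We will not prove a sequence version of Lemma \ref{second operator lemma} to avoid the repetition of too many similar arguments'') and simply invokes \cite[Theorem~3]{Cwi81}. You correctly note this citation as a valid route, so in that sense your proposal already matches the paper's ``proof''.

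Your self-contained sketch goes well beyond what the paper does, and while the overall architecture (reduce to monotone sequences, split via a discrete Lemma~\ref{second partition lemma}, then build block operators \`a la Lemma~\ref{first operator lemma}) is the natural one, a couple of points would need genuine work rather than the hand-wave you give. First, the discrete analogue of Lemma~\ref{first partition lemma} is not an automatic translation: in $(0,\infty)$ one can always split an interval so that the resulting pieces carry prescribed masses, whereas in $\mathbb{Z}^+$ one cannot, so the ``pairing $(I_k,J_k)$ with $f,g$ constant on each'' does not survive verbatim and the replacement mechanism has to be spelled out. Second, your accounting of the constants is speculative: $8^{1/p}$ and $2^{1/p}$ are the constants Cwikel obtains from his own construction, and there is no a priori reason your averaging-plus-bijection scheme lands on exactly these numbers; you would need to actually carry out the estimate. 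Finally, your appeal to the paper's ``deficiency'' remark is misplaced: that remark concerns the tail-majorization lemma (Lemma~\ref{sequence operator lemma 1}), whose difficulty is that it fails for $p<1$, not the head-majorization lemma at hand, whose discrete obstruction is of a different nature.
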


We conclude this subsection with a new interpolation theorem.

\begin{thm}\label{thm: sequence interpolation}
Let $p<q\in (0,\infty]$ such that $q\geq 1$. Let $E$ be a quasi-Banach sequence space. Then $E$ belongs to $\text{Int}(\ell^p,\ell^q)$ if and only if there exists $c_{p,E}$ and $c_{q,E}$ in $\Rb_{>0}$ such that:
\begin{enumerate}[{\rm (i)}]
\item for any $u\in E$ and $v\in\ell^\infty$, if $|v|^p\headd |u|^p,$ then $v\in E$ and $\|v\|_E\leq c_{p,E}\|u\|_E;$
\item if $q<\infty$ : for any $u\in E$ and $v\in\ell^\infty$, if $|v|^q\taill |u|^q,$ then $v\in E$ and $\|v\|_E\leq c_{q,E}\|u\|_E.$
\end{enumerate}
\end{thm}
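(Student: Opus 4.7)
The plan is to parallel the proofs of Theorems \ref{thm: function 1} and \ref{thm: monotonicity to inter}, using the sequence operator lemmas \ref{sequence operator lemma 1} and \ref{sequence operator lemma 2} in place of their function-space analogues.

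For necessity, assume $E \in \text{Int}(\ell^p, \ell^q)$. Given $|v|^p \headd |u|^p$ with $u \in E$ and $v \in \ell^\infty$, Lemma \ref{sequence operator lemma 2} produces $T$ with $Tu = v$ bounded on $(\ell^p, \ell^\infty)$. Standard real interpolation gives $\ell^q \in \text{Int}(\ell^p, \ell^\infty)$, so $T$ is bounded on $(\ell^p, \ell^q)$ and Theorem \ref{rem:def interpolation} transfers this boundedness to $E$, establishing (i). For (ii) (assuming $q < \infty$), the argument is analogous using Lemma \ref{sequence operator lemma 1}, where the hypothesis $q \geq 1$ is essential; this produces $T$ bounded on $(\ell^0, \ell^q)$, and the sequence-space version of Corollary \ref{lem : Lp in Int L0 Lq} (whose proof carries over verbatim from the function-space setting) then shows $T$ is bounded on $(\ell^p, \ell^q)$.

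For sufficiency, assume (i) and (ii). By the sequence analogues of Lemmas \ref{interpolation implies symmetry} and \ref{interpolation implies symmetry second lemma} we may assume $E$ is symmetric. Let $T$ be a contraction on $(\ell^p, \ell^q)$ and $u \in E$. The case $q = \infty$ reduces via Holmstedt's formula to $|Tu|^p \headd |c_p u|^p$ for some $c_p > 0$, and is concluded by (i). When $q < \infty$, Holmstedt's formula applied to $K(t, Tu, \ell^p, \ell^q) \leq K(t, u, \ell^p, \ell^q)$ yields a constant $c$ such that for every $t > 0$, at least one of
\begin{align*}
&\sum_{k < \lfloor t^\alpha \rfloor} \mu_k(Tu)^p \leq \sum_{k < \lfloor t^\alpha \rfloor} \mu_k(c u)^p\\
\text{or}\quad &\sum_{k \geq \lfloor t^\alpha \rfloor} \mu_k(Tu)^q \leq \sum_{k \geq \lfloor t^\alpha \rfloor} \mu_k(c u)^q
\end{align*}
holds, where $\alpha^{-1} = 1/p - 1/q$. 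Applying Lemma \ref{cad decomp lemma} via the identification $i$ produces $g_1, g_2 \geq 0$ with $\mu(Tu) = g_1 + g_2$, $g_1^p \headd (cu)^p$ and $g_2^q \taill (cu)^q$; conditions (i) and (ii), together with the quasi-triangle inequality and symmetry of $E$, then give $\|Tu\|_E \leq C_E(c_{p,E} + c_{q,E})\|cu\|_E$.

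The main point requiring care is checking that the decomposition produced by Lemma \ref{cad decomp lemma} consists of sequences, i.e.\ step functions constant on unit intervals, since conditions (i) and (ii) are formulated for $v \in \ell^\infty$. This can be verified by direct inspection: when the input functions are step functions constant on unit intervals, the sets $A$ and $B$ used in the proof of Lemma \ref{cad decomp lemma} are defined via inequalities between piecewise affine functions with breakpoints only at integers, hence are themselves unions of intervals with integer endpoints, and the resulting $h_1, h_2$ and thus $g_1, g_2$ are step functions on the integer grid.
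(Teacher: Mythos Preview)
Your treatment of necessity is correct and matches the paper's own proof, which simply says to repeat the argument of Theorem~\ref{thm: function 1} with the sequence operator lemmas in place of the continuous ones.

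For sufficiency, the paper does not reprove anything: it cites \cite[Theorem~4.7]{Cad19}. Your attempt to reproduce the argument of Theorem~\ref{thm: monotonicity to inter} in the discrete setting contains a genuine gap at exactly the point you flag as ``requiring care.'' Your claim that the sets $A$ and $B$ in the proof of Lemma~\ref{cad decomp lemma} are unions of intervals with integer endpoints is false. The function $\phi(t)=\int_0^t(g^p-f^p)$ is indeed piecewise affine with kinks at the integers, but a piecewise affine function can change sign in the interior of an interval: if $\phi(k)<0<\phi(k+1)$, the zero of $\phi$ lies strictly between $k$ and $k+1$. Consequently $u_+(t)$ can take non-integer values, $h_1$ and $h_2$ need not be constant on $(k,k+1)$, and the resulting $g_1,g_2$ are in general \emph{not} sequences. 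Since conditions (i) and (ii) are stated only for $v\in\ell^\infty$, you cannot apply them to $g_1,g_2$.

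This is precisely the ``substantial technical difficulty'' arising from non-diffuseness that the paper alludes to at the start of Section~\ref{sequence}, and it is the reason the paper defers to \cite{Cad19} rather than rerunning the function-space proof. A repair is possible (for instance by passing from $g_1,g_2$ to suitably shifted sequences $(\mu(k,g_j))_k$ and absorbing the shift into the constant via a dilation), but this requires additional work that your sketch does not supply. A minor further issue: you invoke Lemma~\ref{interpolation implies symmetry} to assume $E$ is symmetric before knowing $E$ is an interpolation space; this is easily repaired, since condition (i) alone already forces $\|\cdot\|_E$ to be equivalent to a symmetric quasi-norm.
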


\begin{proof}

The proof of the "only if" implication is identical to the proof of Theorem \ref{thm: function 1} using Lemmas \ref{sequence operator lemma 1} and \ref{sequence operator lemma 2} instead of Lemmas \ref{second operator lemma} and \ref{first operator lemma}. The "if" implication is given by \cite[Theorem 4.7]{Cad19}.

\end{proof}

\subsection{Upper Boyd index}

Let us now recall the definition of the upper Boyd index, in the case of sequence spaces. For any $n\in\Nb$ define the dilation operator:
\begin{align*}
  D_n \colon \ell^\infty &\to \ell^\infty\\
  (u_k)_{k\in\Zb^+} &\mapsto\p{ u_{\floor{\frac{k}{n}}}}_{k\in\Zb^+}.
\end{align*}
Let $E$ be a symmetric function space. Define the Boyd index associated to $E$ by:
$$\beta_E = \lim_{k\to\infty} \dfrac{\log \norm{D_k}_{E\to E}}{\log k}.$$
Note that since $E$ is a quasi-Banach space, $\beta_E < \infty$.

In this next proposition, we relate the upper Boyd index to an interpolation property. We follow \cite[Theorem 2]{Mon96}.

\begin{prop}\label{prop:Boyd}
Let $E$ be a quasi-Banach symmetric sequence space. Let $p<\frac{1}{\beta_E}$. There exists a constant $C$ such that for any $u\in E$ and $v\in\ell^\infty$, satisfying  $\md{v}^p \headd \md{u}^p,$ we have $v\in E$ and $\norm{v}_E \leq C \norm{u}_E$.
\end{prop}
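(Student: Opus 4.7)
The strategy, following \cite{Mon96}, is to convert the head-majorization hypothesis into a pointwise domination of $v$ by a weighted sum of dyadic dilates of $u$, then absorb the resulting dilation norms via the Boyd-index bound. By the symmetry of $E$ we may assume $u=\mu(u)$ and $v=\mu(v)$ are nonincreasing and positive. The assumption $|v|^p\headd|u|^p$ together with the monotonicity of $v$ yields
$$
(n+1)v_n^p\leq\sum_{k=0}^{n}v_k^p\leq\sum_{k=0}^{n}u_k^p\qquad(n\geq0).
$$
Splitting the right-hand side dyadically and using $\sum_{k=2^i}^{2^{i+1}-1}u_k^p\leq 2^i u_{2^i}^p$ (which holds because $u$ is decreasing), one obtains, for $n\in[2^m,2^{m+1})$,
$$
v_n^p\leq C\sum_{j=0}^{m}2^{-j}u_{2^{m-j}}^p\leq C\sum_{j\geq 0}2^{-j}(D_{2^{j+1}}u)_n^p,
$$
the last step because $u$ is decreasing and $\lfloor n/2^{j+1}\rfloor\leq 2^{m-j}$. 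This can be rewritten as the pointwise inequality $v^p\leq C\sum_{j\geq 0}2^{-j}D_{2^j}(u^p)$.

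To convert this into an $E$-norm estimate I reduce to a linear setting via the $p$-concavification
$$
\widetilde E:=\{f\in\ell^\infty:|f|^{1/p}\in E\},\qquad\|f\|_{\widetilde E}:=\||f|^{1/p}\|_E^p.
$$
A short computation shows $\widetilde E$ is a symmetric quasi-Banach sequence space. Setting $\tilde u=u^p$ and $\tilde v=v^p$, one has $\|\tilde u\|_{\widetilde E}=\|u\|_E^p$ and $\|\tilde v\|_{\widetilde E}=\|v\|_E^p$; since $D_n$ commutes with $g\mapsto g^{1/p}$,
$$
\|D_n\|_{\widetilde E\to\widetilde E}=\|D_n\|_{E\to E}^p,\qquad\beta_{\widetilde E}=p\beta_E<1.
$$
The pointwise estimate above becomes $\tilde v\leq C\sum_{j\geq 0}2^{-j}D_{2^j}\tilde u$.

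Finally, invoking the Aoki-Rolewicz theorem for $\widetilde E$, pick $r\in(0,1]$ such that $\|\cdot\|_{\widetilde E}^r$ is subadditive. Choose $\epsilon>0$ with $p\beta_E+\epsilon<1$; by definition of $\beta_E$, $\|D_{2^j}\|_{\widetilde E\to\widetilde E}\leq c_\epsilon 2^{j(p\beta_E+\epsilon)}$. Taking the $r$-th power of the $\widetilde E$-norm of the pointwise inequality yields
$$
\|\tilde v\|_{\widetilde E}^r\leq C^rc_\epsilon^r\|\tilde u\|_{\widetilde E}^r\sum_{j\geq 0}2^{jr(p\beta_E+\epsilon-1)},
$$
a convergent geometric series; rearranging gives $\|v\|_E\leq C'\|u\|_E$.

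The main technical obstacle is the passage through $\widetilde E$: one must verify that it inherits a quasi-Banach structure from $E$ (with concavity modulus controlled by that of $E$ and an explicit function of $p$) and that its Boyd index is exactly $p\beta_E$. For $p\geq 1$ one could bypass the concavification by applying the subadditivity $(a+b)^{1/p}\leq a^{1/p}+b^{1/p}$ directly to the dyadic estimate, obtaining $v\leq C\sum_j 2^{-j/p}D_{2^j}u$ and arguing in $E$ itself; for $p<1$ this inequality is reversed, and the concavification trick appears essential.
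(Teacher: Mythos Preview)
Your argument is correct and essentially identical to the paper's: the paper packages your dyadic estimate as the comparison $Cx\leq 3Vx$ between the Ces\`aro operator and $Vx=\sum_{j\geq0}2^{-j}D_{2^j}x$ (Lemma~\ref{C V comparison}), and your second half---$p$-concavification, Aoki--Rolewicz, and summing the geometric series via the Boyd bound---is exactly Lemma~\ref{V estimate lemma}. The only difference is cosmetic: you derive the pointwise bound $v^p\leq C\,V(u^p)$ directly rather than via the intermediate Ces\`aro step.
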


Define the map $V:\ell_{\infty}\to\ell_{\infty}$ by setting
$$Vu=\sum_{n=0}^{\infty}2^{-n}D_{2^n}u,$$
and the map $C:\ell_{\infty}\to\ell_{\infty}$ by
$$(Cu)(n) = \dfrac1{n+1} \sum_{i=0}^n u_n.$$

\begin{lem}\label{V estimate lemma} If $p<\frac1{\beta_E},$ then
$$\big\|(V(u^p))^{\frac1p}\big\|_E \leq c_{p,E}\|u\|_E,\quad 0\leq u\in E.$$
\end{lem}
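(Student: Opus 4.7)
The plan is to reduce the target inequality to a boundedness assertion for $V$ on an auxiliary symmetric quasi-Banach space $F$ obtained by $p$-convexification of $E$, whose upper Boyd index can be forced to be strictly less than $1$. Concretely, I would define $F$ to consist of sequences $x$ with $|x|^{1/p}\in E$, equipped with the quasi-norm
$$\|x\|_F:=\big\||x|^{1/p}\big\|_E^{\,p}.$$
The identity $\|u^p\|_F=\|u\|_E^p$ for $u\ge 0$, and the analogous identity $\big\|(V(u^p))^{1/p}\big\|_E^{\,p}=\|V(u^p)\|_F$, reduce the lemma to the statement that $V:F\to F$ is bounded, applied to $u^p$.

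The first step is to check that $F$ is a symmetric quasi-Banach sequence space. Using $|x+y|^{1/p}\le(|x|+|y|)^{1/p}$ together with the pointwise bound $(a+b)^{1/p}\le 2^{(1/p-1)_+}(a^{1/p}+b^{1/p})$ and the quasi-triangle inequality in $E$, one gets
$$\|x+y\|_F^{1/p}\le 2^{(1/p-1)_+}C_E\big(\|x\|_F^{1/p}+\|y\|_F^{1/p}\big),$$
and then the elementary inequality $(A+B)^p\le 2^{(p-1)_+}(A^p+B^p)$ gives the quasi-triangle inequality in $F$. Completeness is inherited from $E$, and symmetry is clear since $\mu(|x|^{1/p})=\mu(|x|)^{1/p}$. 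Because $D_k$ commutes with pointwise $p$-th powers, $\|D_k\|_{F\to F}\le\|D_k\|_{E\to E}^{\,p}$, hence $\beta_F\le p\beta_E<1$ by hypothesis.

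With $F$ in hand, I would invoke the Aoki--Rolewicz theorem to produce $r\in(0,1]$ such that $F$ admits an equivalent $r$-norm, giving $\|\sum_n x_n\|_F^{\,r}\lesssim\sum_n\|x_n\|_F^{\,r}$ for non-negative sequences $(x_n)$ in $F$. Fixing $\alpha$ with $p\beta_E<\alpha<1$, the definition of the Boyd index furnishes a constant $C>0$ with $\|D_{2^n}\|_{F\to F}\le C\,2^{\alpha n}$ for all $n\ge 0$. For a non-negative $u\in E$,
$$\|V(u^p)\|_F^{\,r}\lesssim\sum_{n=0}^{\infty}2^{-nr}\|D_{2^n}(u^p)\|_F^{\,r}\le C^r\,\|u^p\|_F^{\,r}\sum_{n=0}^{\infty}2^{nr(\alpha-1)},$$
and the series converges since $\alpha<1$. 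Unwinding the definition of $F$ yields $\big\|(V(u^p))^{1/p}\big\|_E^{\,p}=\|V(u^p)\|_F\le c\,\|u\|_E^{\,p}$, which is the desired estimate.

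The main technical step is the bookkeeping in the first paragraph: verifying that $F$ really is a quasi-Banach space and that its Boyd index transforms in the expected way under $p$-convexification. Once this is in place, the remainder is a routine geometric series computation handled by Aoki--Rolewicz. An alternative route specific to $p\ge 1$ would avoid introducing $F$ by using subadditivity of $t\mapsto t^{1/p}$ to write $(V(u^p))^{1/p}\le\sum_n 2^{-n/p}D_{2^n}u$ and applying Aoki--Rolewicz directly in $E$, but the $p$-convexification we propose treats the regimes $p\le 1$ and $p\ge 1$ in a uniform manner.
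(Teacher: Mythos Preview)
Your proposal is correct and essentially identical to the paper's proof: your space $F$ is exactly the paper's $p$-concavification $E_p$, and both arguments then apply Aoki--Rolewicz to this auxiliary space followed by a geometric series estimate coming from the Boyd index. The only cosmetic difference is that you phrase the dilation bound via $\beta_F\le p\beta_E<1$, while the paper uses the equivalent formulation $\|D_{2^n}(u^p)\|_{E_p}=\|D_{2^n}u\|_E^p$ together with $\|D_{2^n}\|_{E\to E}\le c\,2^{n/r}$ for some $r\in(p,\beta_E^{-1})$.
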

\begin{proof} Let $E_p$ be the $p-$concavification of $E,$ that is,
$$E_p=\{f:\ |f|^{\frac1p}\in E\},\quad \|f\|_{E_p}=\||f|^{\frac1p}\|_E^p.$$
Obviously, $E_p$ is a quasi-Banach space. Apply Aoki-Rolewicz theorem to the space $E_p$ and fix $q=q_{p,E}>0$ such that
$$\|\sum_{n\geq0}x_n\|_{E_p}^q\leq C_{p,E}\sum_{n\geq0}\|x_n\|_{E_p}^q.$$
       
For every $u\in E,$ we have
$$\big\|(V(u^p))^{\frac1p}\big\|_E^{qp}=\|V(u^p)\|_{E_p}^q=\|\sum_{n=0}^{\infty} \frac{1}{2^n} (D_{2^{n}}u)^p\|_{E_p}^q\leq$$
$$\leq C_{p,E}\sum_{n=0}^{\infty}\|\frac{1}{2^n} (D_{2^{n}}u)^p\|_{E_p}^q=C_{p,E}\sum_{n=0}^{\infty}2^{-nq}\|D_{2^n}u\|_E^{qp}.$$
       
Let $r\in(p,\beta_E^{-1}).$ By the definition of $\beta_E$, there exists $c_{p,E}>0$ such that $\norm{D_n}_{E\to E} \leq c_{p,E}n^{\frac1r}$ for any $n\in\Nb.$ Therefore,
$$\big\|(V(u^p))^{\frac1p}\big\|_E^{qp}\leq C_{p,E}\cdot c_{p,E}^q\cdot \sum_{n=0}^{\infty}2^{-nq}2^{\frac{nqp}{r}}\|u\|_E^{qp}=C_{p,E}\cdot c_{p,E}^q\cdot \frac{2^q}{2^q-2^{\frac{qp}{r}}}\cdot \|u\|_E^{qp}.$$
\end{proof}

\begin{lem}\label{C V comparison} If $x=\mu(x),$ then $Cx\leq 3Vx$ for every $x\in\ell_{\infty}.$
\end{lem}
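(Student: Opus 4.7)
My plan is to verify the inequality pointwise, coordinate by coordinate, by combining a standard dyadic decomposition of the Cesàro sum with a matching reindexing that lines up dyadic blocks against the terms of $(Vx)(n)$. The case $n=0$ will be essentially trivial, since $(Cx)(0)=x_0$ and $(Vx)(0)=2x_0$; so I would focus on $n\geq 1$ and fix the integer $k_0$ characterized by $2^{k_0}\leq n<2^{k_0+1}$.

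The first step is the textbook dyadic estimate for decreasing sequences: from $\{1,\dots,n\}\subset\bigcup_{k=0}^{k_0}[2^k,2^{k+1})$ and monotonicity of $x$ I would obtain
$$\sum_{i=0}^n x_i\leq x_0+\sum_{k=0}^{k_0}2^k x_{2^k}.$$
The crucial second step is realigning the right-hand sum with $(Vx)(n)$. Writing $m_j=\lfloor n/2^j\rfloor$, the inequality $n<2^{k_0+1}$ forces $m_{k_0-k+1}<2^k$, so monotonicity of $x$ yields $x_{2^k}\leq x_{m_{k_0-k+1}}$. The substitution $j=k_0-k+1$ then converts $\sum_{k=0}^{k_0}2^k x_{2^k}$ into $2^{k_0+1}\sum_{j=1}^{k_0+1}2^{-j}x_{m_j}\leq 2^{k_0+1}(Vx)(n)\leq 2n(Vx)(n)$, where the final step uses $2^{k_0+1}\leq 2n$. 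For the leftover $x_0$ I would exploit the tail of $Vx$: since $m_j=0$ for every $j>k_0$, one has $(Vx)(n)\geq\sum_{j>k_0}2^{-j}x_0=2^{-k_0}x_0\geq x_0/n$, so $x_0\leq n(Vx)(n)$.

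Putting these two estimates together gives $\sum_{i=0}^n x_i\leq n(Vx)(n)+2n(Vx)(n)=3n(Vx)(n)$, and dividing by $n+1$ yields $(Cx)(n)\leq 3(Vx)(n)$. The only genuinely non-mechanical step is identifying the correct reindexing $j=k_0-k+1$; everything else is elementary arithmetic with floors. This shift is forced by the inequality $m_j<2^k$ and is exactly what produces the factor $2^{k_0+1}$ that cancels against $n$ to yield precisely the constant $3$ (one unit from the $x_0$ term, two units from $2^{k_0+1}\leq 2n$).
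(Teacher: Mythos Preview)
Your proof is correct and follows essentially the same dyadic decomposition as the paper's proof. The only organizational difference is that the paper first establishes $(Cx)(2^k-1)\leq 3(Vx)(2^{k+1}-1)$ at dyadic points and then invokes monotonicity of $Cx$ and $Vx$ to reach all $n$, whereas you work directly at an arbitrary $n$ via the reindexing $j=k_0-k+1$; your route is slightly more streamlined but the underlying idea is identical.
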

\begin{proof} Let $k \geq 0$. Since $x$ is decreasing, it follows that
$$(Cx)(2^k-1)=\frac{1}{2^k}\Big(x(0)+ \sum_{i=0}^{k-1}\sum_{j=2^i}^{2^{i+1}-1}x(j)
\Big)\leq \frac{1}{2^k}\Big(x(0) + \sum_{i=0}^{k-1}2^ix(2^i)\Big).$$
On the other hand, we have
$$(Vx)(2^{k+1}-1)=\sum_{n\geq0}2^{-n}x\big(\floor{\frac{2^{k+1}-1}{2^n}}\big)=$$
$$=\sum_{n=0}^k2^{-n}x(2^{k+1-n}-1)+\sum_{n=k+1}^{\infty}2^{-n}x(0)=$$
$$=\frac{1}{2^k}\Big(x(0)+\sum_{i=0}^k2^ix(2^{i+1}-1)\Big).$$
Again using the fact that $x$ is decreasing, we obtain
$$\sum_{i=0}^{k-1}2^ix(2^i)=x(1)+\sum_{i=0}^{k-2}2^{i+1}x(2^{i+1})\leq$$
$$\leq x(1)+2\sum_{i=0}^{k-2}2^ix(2^{i+1}-1)\leq 3\sum_{i=0}^k2^ix(2^{i+1}-1).$$

Combining the three previous inequalities, we have just shown that for any $k\geq 0$,
$$(Cx)(2^k - 1) \leq 3 (Vx)(2^{k+1} - 1).$$
Now let $n\geq 0$ and choose $k$ such that $n \in [2^k - 1, 2^{k+1} - 1]$.
Since $Cx$ and $Vx$ are decreasing, we have:
$$(Cx)(n) \leq (Cx)(2^k - 1) \leq 3(Vx)(2^{k+1} - 1) \leq 3(Vx)(n).$$

\end{proof}

\begin{proof}[Proof of Proposition \ref{prop:Boyd}] Without loss of generality, $u=\mu(u)$ and $v=\mu(v).$ Since $v^p\headd u^p,$ it follows that
$$|v|^p\leq C(|v|^p)\leq C(|u|^p)\leq 3 V(|u|^p),$$
where we used Lemma \ref{C V comparison} to obtain the last inequality. By Lemma \ref{V estimate lemma}, we have
$$\|v\|_E\leq 3^{\frac1p}\big\|(V(|u|^p))^{\frac1p}\big\|_E\leq c_{p,E}\|u\|_p.$$
\end{proof}

We are now ready to deliver a complete resolution of the conjecture stated by Levitina and two of the authors in \cite{LSZ20}.

\begin{proof}[Proof of Theorem \ref{thm:conj LSZ}] Let $E$ be a quasi-Banach sequence space. Let $q\geq 1$. Recall that Theorem \ref{thm:conj LSZ} states that the two following conditions are equivalent:
\begin{enumerate}[{\rm (a)}]
\item there exists $p<q$ such that $E$ is an interpolation space for the couple $(\ell^p,\ell^q)$;
\item there exists $c>0$ such that for any $u\in E$ and $|v|^q \taill |u|^q,$ then $v\in E$ and $\|v\|_E\leq c\|u\|_E.$
\end{enumerate}
Note that by Lemma \ref{interpolation implies symmetry}, we may assume that $E$ is a symmetric space.

$(a) \Rightarrow (b)$. This is immediate by Theorem \ref{thm: sequence interpolation}.

$(b) \Rightarrow (a)$. Let $p < 1/\beta_E$. By Proposition \ref{prop:Boyd}, for any sequence $u\in E$ and $v\in \ell^\infty$, if $\md{v}^p \headd \md{u}^p$, $v \in E$ and $\norm{v}_E \leq c_{p,E}\norm{u}_E$. Applying Theorem \ref{thm: sequence interpolation} for indices $p$ and $q$, we obtain that $E$ belongs to $\text{Int}(\ell^p,\ell^q)$.
\end{proof}

\bibliographystyle{plain}
\bibliography{CSZ}

\end{document}